\documentclass[letterpaper,11pt,oneside,reqno]{article}

%%%%%%%%%%%%%%%%%%%%%%%%%%%%%%%%%%%%%%%%%%%%%%%%%%%%%%%%%%%%

\usepackage[pdftex,backref=page,colorlinks=true,linkcolor=blue,citecolor=red]{hyperref}
\usepackage[alphabetic,nobysame]{amsrefs}

%%%%%%%%%%%%%%%%%%%%%%%%%%%%%%%%%%%%%%%%%%%%%%%%%%%%%%%%%%%%
%main packages
\usepackage{amsmath,amssymb,amsthm,amsfonts,mathtools}
\usepackage{graphicx,color}
\usepackage{upgreek}
\usepackage[mathscr]{euscript}

%equations
\allowdisplaybreaks
\numberwithin{equation}{section}

%tikz
\usepackage{tikz}
\usetikzlibrary{shapes,arrows,positioning,decorations.markings}

%conveniences
\usepackage[centertableaux]{ytableau}
\usepackage{array}
\usepackage{adjustbox}
\usepackage{cleveref}
\usepackage{enumerate}
\usepackage{subcaption}
% \usepackage{showlabels}

%paper geometry
\usepackage[DIV=12]{typearea}

%%%%%%%%%%%%%%%%%%%%%%%%%%%%%%%%%%%%%%%%%%%%%%%%%%%%%%%%%%%%
%draft-specific
\synctex=1
% \usepackage[notcite,notref]{showkeys}
% \usepackage{refcheck,comment}

%%%%%%%%%%%%%%%%%%%%%%%%%%%%%%%%%%%%%%%%%%%%%%%%%%%%%%%%%%%%
%this paper specific
\newcommand{\ssp}{\hspace{1pt}}
\newcommand{\nwsymbol}{%
  \begin{tikzpicture}[baseline=(base)]
    \coordinate (base) at (0,0);
    \draw[line width=1pt] (0,0) |- (0.3em,0.4em);
  \end{tikzpicture}%
}
\newcommand{\sesymbol}{%
	\begin{tikzpicture}[baseline=(base)]
		\coordinate (base) at (0,0);
		\draw[line width=1pt] (0,0)--++(0.1em,0)--++(0,.2em)--++(.2em,0)--++(0,.2em);
	\end{tikzpicture}%
}
\newcommand{\SYT}{\mathrm{SYT}}
\newcommand{\SSYT}{\mathrm{SSYT}}

\newsavebox{\captionnw}
\sbox{\captionnw}{%
	\begin{tikzpicture}[baseline=(base)]
    \coordinate (base) at (0,0);
    \draw[line width=1pt] (0,0) |- (0.3em,0.4em);
  \end{tikzpicture}
}
\newsavebox{\captionse}
\sbox{\captionse}{%
	\begin{tikzpicture}[baseline=(base)]
		\coordinate (base) at (0,0);
		\draw[line width=1pt] (0,0)--++(0.1em,0)--++(0,.2em)--++(.2em,0)--++(0,.2em);
	\end{tikzpicture}
}

%%%%%%%%%%%%%%%%%%%%%%%%%%%%%%%%%%%%%%%%%%%%%%%%%%%%%%%%%%%%
\newtheorem{proposition}{Proposition}[section]
\newtheorem{lemma}[proposition]{Lemma}
\newtheorem{corollary}[proposition]{Corollary}
\newtheorem{theorem}[proposition]{Theorem}
%%%%%%%%%%%%%%%%%%%%%%%%%%%%%%%%%%%%%%%%%%%%%%%%%%%%%%%%%%%%
\theoremstyle{definition}
\newtheorem{definition}[proposition]{Definition}
\newtheorem{remark}[proposition]{Remark}

%%%%%%%%%%%%%%%%%%%%%%%%%%%%%%%%%%%%%%%%%%%%%%%%%%%%%%%%%%%%

\newcommand{\la}{\lambda}
\newcommand{\al}{\alpha}
\DeclareMathOperator{\sgn}{sgn}

\newcommand{\defn}[1]{\emph{\textcolor{blue}{#1}}}
% \newcommand{\defn}[1]{\emph{#1}}

%%%%%%%%%%%%%%%%%%%%%%%%%%%%%%%%%%%%
\newcommand{\loza}[2]{\draw (#1,#2/0.8660254)--++(1,0.57735027)--++(0,-1.15470054)--++(-1,-0.57735027)--cycle;}
\newcommand{\lozb}[2]{\draw (#1,#2/0.8660254)--++(-1,0.57735027)--++(0,-1.15470054)--++(1,-0.57735027)--cycle;}
\newcommand{\lozh}[2]{\draw[fill=cyan] (#1,#2/0.8660254)--++(1,0.57735027)--++(1,-0.57735027)--++(-1,-0.57735027)--cycle;}
%%%%%%%%%%%%%%%%%%%%%%%%%

\begin{document}
\title{Hook-length Formulas for Skew Shapes\\via Contour Integrals and Vertex Models}

% OTHER AUTHORS 

\author{Greta Panova and Leonid Petrov}

\date{}

\maketitle

\begin{abstract}
	The number of standard Young tableaux of a skew shape  $\lambda/\mu$ can be computed as a sum over excited diagrams inside $\lambda$. Excited diagrams are in bijection with certain lozenge tilings,  with flagged semistandard tableaux and also nonintersecting lattice paths inside $\lambda$. We give two new proofs of a multivariate generalization of this formula, which allow us to extend the setup beyond standard Young tableaux and the underlying Schur symmetric polynomials. The first proof uses multiple contour integrals. The second one interprets excited diagrams as configurations of a six-vertex model at a free fermion point, and derives the formula for the number of standard Young tableaux of a skew shape from the Yang-Baxter equation.
\end{abstract}

% Hook-length Formulas for Skew Shapes via Contour Integrals and Vertex Models
% Greta Panova and Leonid Petrov
% The number of standard Young tableaux of a skew shape  $\lambda/\mu$ can be computed as a sum over excited diagrams inside $\lambda$. Excited diagrams are in bijection with certain lozenge tilings,  with flagged semistandard tableaux and also nonintersecting lattice paths inside $\lambda$. We give two new proofs of a multivariate generalization of this formula, which allow us to extend the setup beyond standard Young tableaux and the underlying Schur symmetric polynomials. The first proof uses multiple contour integrals. The second one interprets excited diagrams as configurations of a six-vertex model at a free fermion point, and derives the formula for the number of standard Young tableaux of a skew shape from the Yang-Baxter equation.
% Primary:
% 05E05 - Symmetric functions and related topics
%
% Secondary:
% 05A15 - Exact enumeration problems, generating functions
% 82B23 - Exactly solvable models; Bethe ansatz
%
% Keywords:
% Skew hook-length formula; Excited diagrams; Factorial Schur functions; Orthogonality; Multiple Contour Integrals; Yang-Baxter equation; Vertex models; Interpolation Macdonald polynomials

\section{Introduction}\label{sec:intro}

The \defn{hook-length formula} of Frame-Robinson-Thrall~\cite{FRT1954hook} for the number of standard Young tableaux often goes with the adjective ``celebrated'': it is a remarkably rare phenomenon for a class of partially ordered sets to have a product formula for the number of their linear extensions. For a partition $\la$, the number $f^\la$ of standard Young tableaux (SYT) of shape $\la$ is given by 
\begin{align}\label{eq:syt_hook}
    f^\la = |\la|! \prod_{(i,j) \in \la} \frac{1}{h(i,j)}, \tag{HLF}
\end{align}
where $h(i,j) = \la_i+\la'_j-i-j+1$ is the hook length of the box $(i,j)$ in the Young diagram of $\la$. 
This formula has seen many different proofs, from combinatorial to probabilistic and algebraic, each bringing out different ideas and properties.  

\medskip
The immediate generalization of standard Young tableaux, the skew standard Young tableaux, do not have such nice product formulas. The number $f^{\lambda/\mu}$
of skew standard Young tableaux of shape $\lambda/\mu$ is usually represented via determinants or sums of weighted Littlewood-Richardson coefficients. Ten years ago Naruse~\cite{naruse2014schubert}, following work in~\cite{ikeda2009excited}, announced a remarkable formula, which directly generalizes~\eqref{eq:syt_hook}:
\begin{align}\label{eq:skeq_syt_hook}
    f^{\la/\mu} = |\la/\mu|! \sum_{D \in \mathcal{E}(\la/\mu)} \ssp
		\prod_{(i,j) \in \la \setminus D} \frac{1}{h(i,j)}, \tag{NHLF}
\end{align}
where $\mathcal{E}(\la/\mu)$ is the set of so called \defn{excited diagrams} of $\mu$ inside $\la$, and $h(i,j)$ is the hook length of the box $(i,j)$ within $\la$. The origins of this formula lay within equivariant Schubert calculus. 

\medskip
Formula \eqref{eq:skeq_syt_hook} attracted a lot of attention with its elegance and prompted a flurry of activity bringing in various proofs (including \cite{MPP1,MPP2,konvalinka2020bijective,Darij_Naruse2023}), generalizations (among them \cite{naruse2019skew,morales2020okounkov,morales2023minimal,suzuki2021hook,park2021naruse,kirillov2019hook,MPP4GrothExcited}),  wide-ranging applications (see e.g. \cite{HKYY2019reverse,jiradilok2023roots,morales2018asymptotics,felder2023hypergeometric,chan2021sorting, pak2021skew}) and other variations on the theme (e.g. \cite{konvalinka2020hook, morales2023minimal}). Its multivariate version appeared in the proofs and applications of \cite{MPP2, MPP3} in the context of lozenge tilings, but in its most explicit and general form it was stated and proved via elaborate but elementary combinatorial manipulations in \cite{Darij_Naruse2023}:
\begin{equation}\label{eq:general}
    \sum_{T \in \ssp\SYT(\la/\mu)}  \prod_{k=1}^{|\la/\mu|} \frac{1}{z(T^{-1}[\geq k])} = \sum_{D \in \mathcal{E}(\la/\mu)} \prod_{u \in \la\setminus D} \frac{1}{z( H(u) )}, 
\end{equation}
where $z(D) = \sum_{u \in D} z_{c(u)}$ for every excited diagram $D$, $H(u)$ is the hook of $u$ within $\lambda$, and $T^{-1}[\geq k]=\{ u \in \la/\mu, T(u) \geq k\}$ is the set of boxes in $T$ occupied by entries $\geq k$.  In the case of $\mu=\varnothing$ this formula is due to Pak-Postnikov (see~\cite{Darij_Naruse2023} for a detailed account). 
Setting all $z_i=1$ recovers~\eqref{eq:skeq_syt_hook}.

\medskip
In the present work, we give two completely self-contained
and short proofs of formula \eqref{eq:general}, which are
 different in nature from the approaches so far and are not combinatorial. Our
central identity in \Cref{thm:general} below is
equivalent\footnote{By setting $z_{-n+i}=t_{i+1} -t_{i}$,
where $n=\ell(\lambda)$, see \Cref{prop:equivalence}.} to~\eqref{eq:general}, and we refer to it as
the \defn{skew multivariate hook-length formula
(skew-MHLF)}.  Given a Young diagram $\la$ 
with $n=\ell(\la)$ nonzero rows
and formal
variables $t_1,t_2,\ldots $, we set $x_i\coloneqq
t_{\la_i+n-i+1}$ for $i=1,\ldots,n$, and set the remaining
$t$'s equal to the variables $y$:
$\{y_1,y_2,\ldots\}\coloneqq \{t_1,t_2,\ldots \} \setminus
\{x_1,\ldots,x_n \}$. For example, for $\lambda=(2,1)$, we have $n=2$,
$x_1=t_4$, $x_2=t_2$, and
$y_1=t_1$, $y_2=t_3$, $y_3=t_5$, and so on. 
\begin{theorem}\label{thm:general}
	 Let $\mu \subseteq \la$ be two Young diagrams, and
	 $t_1,t_2,\ldots$ be formal variables. Then
		\begin{align}\label{eq:general2}
			 \sum_{T \in \ssp\SYT(\la/\mu)} 
			 \prod_{k=1}^{|\lambda/\mu|}
			 \frac{1}{t(T^{-1}[< k])} 
			 =
			 \sum_{D \in \mathcal{E}(\la/\mu)}
			 \prod_{(i,j) \in \la \setminus D} 
			 \frac{1}{t_{\la_i+n-i+1} -t_{j+n-\la'_j}}, 
			 \tag{MHLF}
	 \end{align}
	 where for a skew Young diagram $T^{-1}[\geq k] = \la/\nu$
	 occupied by entries $\geq k$ in a \textnormal{SYT} $T$,
	we set $T^{-1}[<k]=\nu$ (by agreement, $T^{-1}[<1]=\mu$), 
	and denote $t(\nu) \coloneqq  \sum_i t_{\la_i+n-i+1}- t_{\nu_i+n-i+1} $. Here~$\la'$ is the transpose of $\la$ .
\end{theorem}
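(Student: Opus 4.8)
The plan is to prove \eqref{eq:general2} by writing \emph{both} sides as residue expansions of one and the same $n$-fold contour integral, the two sides corresponding to two choices of integration contours that are related by an allowable deformation. It is convenient to use the content-variable form of the statement: setting $z_c\coloneqq t_{c+n+1}-t_{c+n}$ one has the telescoping identity $t(\nu)=\sum_{(i,j)\in\la/\nu}z_{j-i}$, so the left-hand side is a sum over maximal chains $\mu=\nu^{(0)}\lessdot\nu^{(1)}\lessdot\cdots\lessdot\nu^{(m)}=\la$ in Young's lattice ($m=|\la/\mu|$), while the factors $t_{\la_i+n-i+1}-t_{j+n-\la'_j}$ on the right are exactly $\sum_{u\in H(i,j)}z_{c(u)}$, the content weight of the hook of $(i,j)$ in $\la$.

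\emph{The excited-diagram side as a contour integral.} Excited diagrams of $\mu$ inside $\la$ are in bijection with families of $n=\ell(\la)$ nonintersecting lattice paths inside $\la$ (equivalently, with flagged semistandard tableaux), and the hook weights factor multiplicatively along the steps of the paths. By the Lindstr\"om--Gessel--Viennot lemma the excited-diagram sum is an $n\times n$ determinant of single-path generating functions, each rational in the $t$'s with simple poles only at prescribed $t_s$. A determinant of this type can be written as a multiple contour integral
\[
    \frac{1}{(2\pi i)^{n}\,n!}\oint\!\cdots\!\oint \Phi(w_1,\dots,w_n)\,\prod_{i<j}(w_i-w_j)\,\prod_{i=1}^{n}dw_i,
\]
with $\Phi$ built from the elementary factors $\prod_s 1/(w_i-t_s)$ and $\prod_s(w_i-t_s)$ that appear in the path weights; choosing each $w_i$-contour to enclose precisely the poles attached to the up-steps of $\partial\la$, i.e.\ the variables $x_i=t_{\la_i+n-i+1}$, recovers the excited-diagram sum as the corresponding sum of residues.

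\emph{The tableau side as the same contour integral.} Peeling the box labelled $k$ off a standard Young tableau of shape $\la/\mu$ turns the tableau sum into the iterated chain sum above; the goal is to recognize this chain sum as the residue expansion of the \emph{same} integrand $\Phi$, but now with the $w_i$-contours placed around the $y$-variables together with a nested, ``moving'' subfamily of the $x$-variables --- this is the standard mechanism by which principally specialized skew Schur-type sums collapse onto multiple integrals. I expect the real work to be here: one must verify that the residues actually picked up are in bijection with \emph{valid} standard Young tableaux, i.e.\ that row- and column-strictness of $T$ (admissibility of the chain in Young's lattice) matches the permitted nesting of the contours, and that no spurious residues contribute. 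All of this bookkeeping is controlled by the partition of the variables $t_s$ into the ``$x$-type'' variables $\{t_{\la_i+n-i+1}\}$ and the remaining ``$y$-type'' variables along $\partial\la$, together with the analogous split determined by $\mu$.

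With both sides expressed as the integral displayed above over two explicit contour configurations, the proof then closes by deforming one configuration into the other: $\Phi$ has simple poles only at the $t_s$, and the $x$-versus-$y$ bookkeeping guarantees that the intermediate contours cross no pole of $\Phi$, so the two integrals agree and \eqref{eq:general2} follows. An independent route --- matching the vertex-model proof advertised in the abstract --- is to read each excited diagram as a configuration of a six-vertex model whose weights are chosen at a free-fermion point, compute its partition function by commuting row-to-row transfer matrices via the Yang--Baxter equation, and observe that the resulting symmetrization in the spectral parameters collapses the excited-diagram partition function onto the standard-tableau side. In either approach the only genuine difficulty is the same --- pinning down exactly which configurations survive --- and it is resolved by the $x/y$ structure of the boundary path $\partial\la$.
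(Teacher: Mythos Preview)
Your proposal is a plan, not a proof: every substantive step is left as an assertion (``I expect the real work to be here'', ``this is resolved by the $x/y$ structure'').  More importantly, the mechanism you describe for the tableau side is not the one the paper uses, and as stated it does not work.  You claim that the SYT sum arises by evaluating the \emph{same} $n$-fold integral with the $w_i$-contours relocated ``around the $y$-variables together with a nested, moving subfamily of the $x$-variables''.  But the integral is $n$-fold while each summand on the tableau side is a product of $|\la/\mu|$ factors $1/t(\nu^{(k)})$; a single residue evaluation of an $n$-fold integral cannot manufacture a product whose length depends on $|\la/\mu|$ rather than on $n$.  No contour deformation of a fixed $n$-dimensional integral will bridge that mismatch.

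What the paper actually does with the integral is different.  It evaluates the integral $F_\mu(x\mid a)$ in two ways, but \emph{neither} of them is the SYT side.  Residues at the finite poles $u_i=x_{\sigma(i)}$ give the determinantal (factorial Schur) form; residues at $u_i=\infty$ give a Jacobi--Trudi type determinant which, via Lindstr\"om--Gessel--Viennot, becomes the flagged-SSYT/excited-diagram sum after the specialization $a=a^\la$.  The SYT side is obtained by a third, separate argument: a one-box Pieri rule
\[
\Bigl(\textstyle\sum_i x_i-\sum_i a_{\mu_i+n-i+1}\Bigr)\,F_\mu(x\mid a)=\sum_{\nu=\mu+\square}F_\nu(x\mid a)
\]
is proved by a direct manipulation of the integrand (factoring $\sum_i(u_i-a_{\mu_i+n-i+1})$ out of $\sum_i\prod_j f_{\mu_j+\mathbf 1_{i=j}+n-j}(u_j)$), and then this Pieri rule is \emph{iterated} $|\la/\mu|$ times, using the vanishing $F_\mu(x\mid a^\la)=0$ for $\mu\not\subseteq\la$ to terminate the recursion.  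That iteration is what produces the chain sum over $\mu=\nu^{(0)}\lessdot\cdots\lessdot\nu^{(m)}=\la$, and it is the step you are missing.  Your sketch of the excited-diagram side (LGV $\to$ determinant $\to$ integral) is in the right spirit, but the contour-deformation story you tell for the other side should be replaced by ``prove a Pieri rule for the integral, prove vanishing, and iterate''.
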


In \Cref{sec:background,sec:general_formalism},
we provide the necessary background and a general
formalism for obtaining sums over skew standard
Young tableaux from Pieri-type rules.

\medskip
Our first proof of \Cref{thm:general} given in 
\Cref{sec:contour_integrals}
evaluates a contour integral of a multivariate rational
function in two different ways. The first evaluation gives a
recursion (Pieri-type formula) which builds up standard
Young tableaux one box at a time, and produces the left-hand side of 
\eqref{eq:general2}. The second evaluation of that
integral gives a determinant of weighted lattice path counts, which via the
Gessel-Viennot formula is equivalent to a weighted enumeration of non-intersecting lattice paths inside $\lambda$, themselves equivalent to the excited
diagrams in the right-hand side of \eqref{eq:general2}. We also derive in~\Cref{prop:oof_multi} an analogous multivariate version of the Okounkov-Olshanski formula studied in~\cite{morales2020okounkov}.
%Despite what it sounds, the theory of symmetric functions is not actually used. 

\medskip
The second proof of 
\Cref{thm:general} given 
in \Cref{sec:YBE_proof_of_Naruse}
interprets the identity through integrable vertex models.
More precisely, we interpret the sum over
excited diagrams in the right-hand side of 
\eqref{eq:general2}
as a partition function in the 
six-vertex model at a free fermion point.
The vertex model lives inside the Young diagram $\lambda$,
and the boundary conditions depend on $\mu$.
Using the R-matrix and Yang-Baxter equation,
we show that this partition function
obeys a
recursive formula, building
up the SYTs in the left-hand side of \eqref{eq:general2}.

\medskip

These proofs clear some of the hanging mysteries around the
skew hook-length formula~\eqref{eq:skeq_syt_hook}.  Both
methods allow to generalize this formula to a sum over
semistandard Young tableaux (SSYTs) instead of SYTs, as well
as to other tableaux.  See \Cref{app:SSYT_variant} for one
possible generalization.  Connecting vertex models to
excited diagrams suggests a broad class of boundary
conditions for the six-vertex model. It would be interesting
to explore the corresponding partition functions beyond the
free fermion point.  Note also that both proofs suggest
explicit ways of generalizing formula~\eqref{eq:general2} to
the level of Hall-Littlewood and Macdonald
polynomials. As an illustration, in
\Cref{sub:Macdonald_hook_length} we produce an identity at
the Macdonald level.

\subsection*{Acknowledgments}

We are grateful to 
Darij Grinberg,
Alejandro Morales, Slava Naprienko, Igor Pak,
and Alexander Varchenko for helpful discussions.
A part of this research was performed in Spring 2024 while
the authors were
visiting the
program
``Geometry, Statistical Mechanics, and Integrability''
at the Institute for Pure and Applied Mathematics
(IPAM), which is supported by the NSF grant DMS-1925919. 
GP was partially supported by the NSF grant CCF-2302174. LP was partially supported by the NSF grant DMS-2153869 and the Simons Collaboration Grant for Mathematicians 709055.

\section{Background and definitions}
\label{sec:background}

\ytableausetup{boxsize=1ex}

% {yellow}{\parbox{.7\textwidth}{add notation:
% Here $\mu\subseteq \lambda$
% means that the Young diagram $\mu$ is inside $\lambda$, that is,
% $\mu_i\le \lambda_i$ for all $i$.}}
%
% {yellow}{\parbox{.7\textwidth}{add this here:
%   where the sum in the left-hand side is over all skew standard Young tableaux of shape $\lambda/\mu$.
%   Here $T[\le k]$ is the
%   Young diagram formed by the boxes of $T$ filled with numbers $\le k$,
%   $k=1,2,\ldots,|\lambda/\mu|$.
%   Similarly for $T[<k]$,
%   with the agreement that $T[<1]=\mu$.
% }}

\subsection{Partitions and Young tableaux}
\label{sub:partitions_young}

A \defn{partition} $\la$ of an integer $N$ is a sequence
$\la=(\la_1,\ldots,\la_k)$ of integers $\la_1\geq \la_2
\geq \cdots \geq \la_k \geq 0$, summing up to $N$, i.e.,
$|\lambda|\coloneqq \la_1+\cdots+\la_k=N$. We denote by $\ell(\la) =\max\{i:
\la_i>0\}$ the \defn{length} of the partition, and by $\la'$ its
conjugate transpose, i.e. $\la'_i = \max\{j: \la_j \geq i\}$.  
We represent partitions graphically as \defn{Young diagrams}, with top row having $\la_1$, and so on. For
example $\la =(4,2,1)$ has Young diagram $\ydiagram{4,2,1}$.
We use the same notation $\lambda$ for the partition and its Young diagram
(a set of boxes in $\mathbb{Z}_{\ge1}\times \mathbb{Z}_{\ge1}$),
and it would be clear from the context which one is meant.

A \defn{skew shape} (\defn{diagram}) $\la/\mu$ is the set of boxes in the
Young diagram of $\la$ but not in the diagram of $\mu$ when
both are drawn with top left corner coinciding. 
We view skew shapes $\la/\mu$ as sets of boxes in
$\mathbb{Z}_{\ge1}\times \mathbb{Z}_{\ge1}$.
When using the notation $\lambda/\mu$, 
we always assume that $\mu\subseteq \lambda$,
that is, $\mu_i\le \lambda_i$ for all $i$.
When $\mu=\varnothing$, we have $\la/\mu=\la$.
Denote by $|\lambda/\mu|$
the number of boxes in $\lambda/\mu$ (called \defn{size}).

\ytableausetup{boxsize=1em}

A \defn{standard Young tableaux} (SYT) of shape $\la/\mu$ is a bijection $T:\la/\mu \to \{1,\ldots,|\la/\mu|\}$, such that $T(a,b)\leq T(c,d)$ whenever $a\leq c, b\leq d$. For example, all SYTs of shape $(3,2)/(1)$ are 
$$\ytableaushort{\none 12,34} ,\quad \ytableaushort{\none 13,24}, \quad \ytableaushort{\none 14,23}, \quad \ytableaushort{\none 23,14}, \quad \ytableaushort{\none 24,13}\ .$$
A \defn{semistandard Young tableaux} (SSYT) of shape $\la/\mu$ and type $\alpha$ is a map $T:\la/\mu \to \{1,\ldots,\ell(\alpha)\}$, such that $T(a,b) < T(c,d)$ for $a<c$, $b\leq d$, $T(a,b) \leq T(a,d)$ for $b \leq d$ and $|T^{-1}(i)|=\alpha_i$. The last condition means that the number of boxes filled with $i$ equals $\alpha_i$. Here $\alpha$ is a composition (i.e., a partition without the ordering condition).
A \defn{flagged \textnormal{SSYT}}~\cite{wachs1985flagged} of shape $\la$ and flag $\mathsf{f}=(f_1,\ldots,f_{\ell(\la)})$ is an SSYT $T$, such that in addition, 
$T(i,j) \leq f_i$ for every $i=1,\ldots,\ell(\la)$.
We denote the sets of SYTs and SSYTs of shape $\la/\mu$ by $\SYT(\la/\mu)$ and $\SSYT(\la/\mu)$ respectively, and the set of
flagged SSYTs 
of shape $\mu$ with flag $\mathsf{f}$ by $\SSYT(\mu;\mathsf{f})$.
By convention, all tableaux and skew tableaux are filled with numbers 
starting from $1$.

\ytableausetup{boxsize=1.5ex}

\subsection{Excited diagrams, lozenge tilings and non-intersecting lattice paths}\label{ss:excited}

\defn{Excited diagrams} have appeared many times in the literature, including e.g. \cite{ikeda2009excited, knutson2009grobner, kreiman2005schubert, wachs1985flagged}. One definition uses the following recursive procedure.  Let $D$ be a set of boxes in $\lambda$. An \defn{excited move} on a box in $D$ shifts that box by one along its diagonal as long as none of its immediate neighbors below, to the right or down the diagonal are in $D$:
$$\ydiagram[*(cyan)]{1}*{2,2} \to \ydiagram{2,2}*[*(cyan)]{0,1+1}\ssp.$$
Then the set $\mathcal{E}(\la/\mu)$ is the set of all diagrams in $\la$ which can be obtained from $D=\mu$ after performing a set of the above moves. For example, all 
excited diagrams in $\mathcal{E}((3,3,2)/(2,1))$ are
$$\mathcal{E}((3,3,2)/(2,1)) = \left\{ \ydiagram{3,3,2}*[*(cyan)]{2,1}\ssp, \quad \ydiagram{3,3,2}*[*(cyan)]{1,2+1}*[*(cyan)]{0,1}\ssp, \quad \ydiagram{3,3,2}*[*(cyan)]{2}*[*(cyan)]{0,0,1+1}\ssp, \quad \ydiagram{3,3,2}*[*(cyan)]{1,2+1}*[*(cyan)]{0,0,1+1}\ssp, \quad \ydiagram{3,3,2}*[*(cyan)]{0,1+2,1+1} \right\}.$$
It was observed that excited diagrams
in $\mathcal{E}(\la/\mu)$
bijectively correspond to flagged SSYTs of shape $\mu$ with the flag condition
$f_i = \max\{j\colon  \la_j-j \geq \mu_i-i\}$. 
The correspondence is depicted on the left side of \Cref{fig:ssyt_paths}, and is given as follows.
For $D\in \mathcal{E}(\la/\mu)$, create an SSYT $T$ given by $T(i,j)=r$, 
where $r$ is the row index of the location of the initial box 
$(i,j)$ from $\mu$ in the excited diagram $D$.
This is pictured in the second subfigure of \Cref{fig:excited_diagrams}.
\ytableausetup{boxsize=1em}
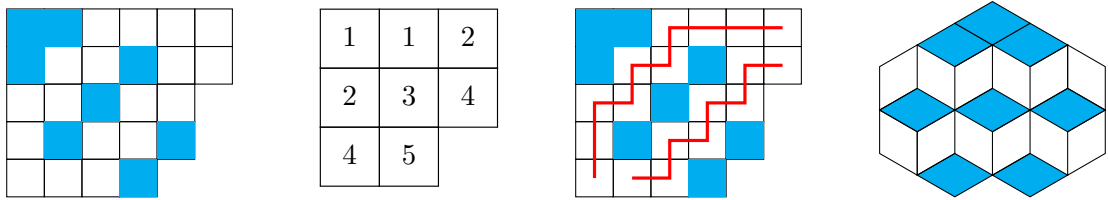
\begin{figure}[htpb]
	\centering
  \raisebox{-35pt}{\begin{tikzpicture}[scale=0.5]
               \foreach \x in {0,1,2,3,4,5} {
\draw (\x,0) rectangle ++(1,1);
\draw (\x,-1) rectangle ++(1,1);
}
\foreach \x in {0,1,2,3,4} {
\draw (\x,-2) rectangle ++(1,1);
\draw (\x,-3) rectangle ++(1,1);
}
\foreach \x in {0,1,2,3} {
\draw (\x,-4) rectangle ++(1,1);
}

% Paint some boxes blue
\fill[cyan] (0,0) rectangle ++(1,1);
\fill[cyan] (1,0) rectangle ++(1,1);
\fill[cyan] (0,-1) rectangle ++(1,1);

\fill[cyan] (3,-1) rectangle ++(1,1);
\fill[cyan] (2,-2) rectangle ++(1,1);
\fill[cyan] (1,-3) rectangle ++(1,1);
\fill[cyan] (4,-3) rectangle ++(1,1);
\fill[cyan] (3,-4) rectangle ++(1,1);
 \end{tikzpicture}}   
$\qquad$   
\ytableausetup{boxsize=2em}
\ytableaushort{112,234,45}
\ytableausetup{boxsize=1em}
$\qquad$ 
 \raisebox{-35pt}{\begin{tikzpicture}[scale=0.5]
               \foreach \x in {0,1,2,3,4,5} {
\draw (\x,0) rectangle ++(1,1);
\draw (\x,-1) rectangle ++(1,1);
}
\foreach \x in {0,1,2,3,4} {
\draw (\x,-2) rectangle ++(1,1);
\draw (\x,-3) rectangle ++(1,1);
}
\foreach \x in {0,1,2,3} {
\draw (\x,-4) rectangle ++(1,1);
}

% Paint some boxes blue
\fill[cyan] (0,0) rectangle ++(1,1);
\fill[cyan] (1,0) rectangle ++(1,1);
\fill[cyan] (0,-1) rectangle ++(1,1);

\fill[cyan] (3,-1) rectangle ++(1,1);
\fill[cyan] (2,-2) rectangle ++(1,1);
\fill[cyan] (1,-3) rectangle ++(1,1);
\fill[cyan] (4,-3) rectangle ++(1,1);
\fill[cyan] (3,-4) rectangle ++(1,1);

\draw[ very thick, red] (0.5,-3.5)--(0.5,-1.5)--(1.5,-1.5)--(1.5,-0.5)--(2.5,-0.5)--(2.5,0.5)--(5.5,0.5);

\draw[very thick, red] (1.5,-3.5)--(2.5,-3.5)--(2.5,-2.5)--(3.5,-2.5)--(3.5,-1.5)--(4.5,-1.5)--(4.5,-0.5)--(5.5,-0.5);

		 \end{tikzpicture}} $\qquad$
\raisebox{-35pt}{\begin{tikzpicture}[scale=0.5]
\lozh{2}{0}
\lozh{4}{0}
\lozh{1}{1.5}
\lozh{3}{1.5}
\lozh{5}{1.5}
\lozh{2}{3}
\lozh{4}{3}
\lozh{3}{3.5}
\loza{2}{1}
\loza{4}{1}
\loza{6}{1}
\loza{1}{2.5}
\loza{3}{2.5}
\loza{5}{2.5}
\lozb{2}{1}
\lozb{7}{2.5}
\end{tikzpicture}}
\caption{The many faces of excited diagrams. From left to right: An excited diagram in $\mathcal{E}(\la/\mu)$ for $\la = (6,6,5,5,4)$ and $\mu = (3,3,2)$; the corresponding flagged SSYT (with $f = (3,4,5)$); the nonintersecting lattice paths; and the lozenge tiling.}\label{fig:excited_diagrams}
\end{figure}

In~\cite{MPP2} and separately in~\cite{kreiman2005schubert}, it was observed that excited diagrams are also in bijection with non-intersecting lattice paths within $\la$ which start at the lower border and exit at the right border of $\la$. They are formed exactly by the squares in $\la\setminus D$, as illustrated in the third subfigure of Figure~\ref{fig:excited_diagrams}.

It was then observed in~\cite{MPP3} that excited diagrams are in a bijective correspondence with restricted lozenge tilings of a region with lower boundary given by $\mu$, which can be viewed in 3D as a stack of boxes in the corner of a room with base $\mu$ and height $d$, which depends on $\la$. To see this, let $T$ be the flagged SSYT corresponding to $D$, we then stack $d-T(i,j)+i$ many boxes on the square $(i,j)$ of $\mu$. The partition $\la$ determines how low each column can be, see the last subfigure in Figure~\ref{fig:excited_diagrams}.

\subsection{Symmetric functions} 

While the idea of the present paper is not to rely on any
symmetric functions formalism and identities, many of them
appear in our applications. For the background definitions
we refer to~\cite{Macdonald1995,Stanley1999}. 
The elementary and (complete) homogeneous symmetric polynomials
are
\begin{equation*}
    e_k(x_1,\ldots,x_n) \coloneqq  \sum_{1\leq i_1<\cdots<i_k \leq n} x_{i_1}x_{i_2} \cdots x_{i_k} ,
		\qquad 
		% \qquad e_{\la}\coloneqq e_{\la_1} e_{\la_2}\cdots \\
		h_k(x_1,\ldots,x_n) \coloneqq  \sum_{1\leq i_1 \leq\cdots \leq i_k \leq n} x_{i_1}x_{i_2} \cdots x_{i_k}.% \qquad h_\la \coloneqq  h_{\la_1}h_{\la_2}\cdots.
\end{equation*}
Their generating functions are, respectively, 
\begin{equation*}
	\sum_{r=0}^{n}z^r e_r(x_1,\ldots,x_n ) = \prod_{i=1}^n (1+zx_i), \qquad \sum_{r=0}^{\infty}z^r h_r(x_1,\ldots,x_n ) = \prod_{i=1}^n \frac{1}{1-zx_i}.
\end{equation*}
The \defn{factorial Schur polynomials} are defined as follows:
\begin{equation}\label{eq:weyl}
	s_\mu(x_1,\ldots,x_n\mid a) \coloneqq \frac{1}{\Delta(x)} \det[ (x_i-a_1)\cdots(x_i-a_{\mu_j+n-j})]_{i,j=1}^n,
\end{equation}
where $a_1,a_2,\ldots $ is an arbitrary sequence of shifts, and 
\begin{equation}
	\label{eq:Vandermonde}
	\Delta(x) \coloneqq \prod_{1\le i<j\le n} (x_i-x_j)
\end{equation}
is the Vandermonde determinant.
When all $a_i=0$, we obtain the classical Schur polynomials, $s_\mu(x) = s_\mu(x \mid  \mathbf{0} )$.
Factorial Schur polynomials admit many nice properties common with the Schur polynomials
\cite{biedenharn1989new},
\cite[6th variation]{macdonald1992schur_Theme},
\cite{MolevSagan1999,
molev2009comultiplication}.
In particular, there is the 
following combinatorial formula:
\begin{equation}\label{eq:combin}
	s_\mu(x_1,\ldots,x_n \mid a) = \sum_{T \in \ssp\SSYT(\mu)} \prod_{u \in \mu} (x_{T(u)} - a_{T(u)+c(u)}),
\end{equation}
where for a box $u =(i,j)$ in the Young diagram of $\mu$, the content is 
$c(u)\coloneqq j-i$, and $T(u)$ is the value of $T$ in that box.
The entries in $T$ must be $\leq n$.\footnote{It is possible to insert infinitely many variables into
$s_\mu(\cdots\mid a)$ 
within the formalism of symmetric functions and drop the condition $T(u)\le n$, but we do not need this here.}
We also employ interpolation Macdonald polynomials in \Cref{sub:Macdonald_hook_length}
which admit a combinatorial formula similar to \eqref{eq:combin},
but not a determinantal formula like \eqref{eq:weyl}.

\section{How to get sums over skew standard Young tableaux}
\label{sec:general_formalism}

\subsection{General formalism}
\label{sub:general_formalism_general}

Here we present a general formalism for obtaining summation formulas 
over skew standard Young tableaux (SYTs) 
which come from certain
vanishing properties and Pieri rules.
The main statement of this subsection, \Cref{prop:general_skew_SYT_sum},
appeared in the particular case of the factorial Schur functions 
in
\cite[Proposition 3.2]{MolevSagan1999}, with essentially the same proof.

Assume that 
$\mathsf{Z}_\mu(\lambda)$ is a function of two Young diagrams. 
It may be complex-valued, and can in addition depend on some parameters.
\begin{remark}
	In applications, we obtain $\mathsf{Z}_\mu(\lambda)$ as a specialization of 
	a symmetric polynomial $F_\mu(x_1,\ldots,x_n)$ (like the factorial
	Schur polynomial 
	$s_\mu(x_1,\ldots,x_n\mid a)$ \eqref{eq:weyl}) into the variables $x_i=x_i(\lambda)$,
	$1\le i\le n$, which depend on $\lambda$.
	See \Cref{sub:general_formalism_examples} for a list of examples.
	However, a connection to symmetric polynomials is not
	necessary for the results of the present
	\Cref{sub:general_formalism_general}.
\end{remark}

We assume that the quantities $\mathsf{Z}_\mu(\lambda)$ satisfy
the following conditions:
\begin{enumerate}[$\bullet$]
	\item 
		(\hspace{-1.5pt}\defn{Vanishing property})
		$\mathsf{Z}_\mu(\lambda) = 0$ if $\mu \not\subseteq \lambda$,
		and $\mathsf{Z}_\mu(\mu)\ne 0$ for all $\mu$.
		\item 
			(\hspace{-1pt}\defn{Pieri rule}) There exist quantities 
		$\mathsf{p}_\mu(\lambda)$ and constants $\mathsf{C}_{\nu/\mu}$ such
		that for all $\mu$ and $\lambda$ we have
		\begin{equation}
			\label{eq:pieri_rule_general}
			\mathsf{p}_\mu(\lambda)\ssp \mathsf{Z}_\mu(\lambda) =
			\sum_{\nu=\mu+\square} \mathsf{C}_{\nu/\mu} \ssp\mathsf{Z}_\nu(\lambda).
		\end{equation}
		Here the sum is over all $\nu$ with $|\nu|=|\mu|+1$ which are obtained from 
		$\mu$ by adding a box, and such that $\nu\subseteq \lambda$. 
		We also assume that 
		$\mathsf{p}_\mu(\lambda)\ne 0$ for all $\mu\subseteq \lambda$ with $\mu\ne \lambda$.
		Note that the vanishing property and the Pieri rule imply that 
		$\mathsf{p}_\lambda(\lambda)$ must be zero. 
\end{enumerate}

\begin{proposition}
	\label{prop:general_skew_SYT_sum}
	Under the vanishing property and the Pieri rule
	\eqref{eq:pieri_rule_general},
	we have for any pair of Young diagrams $\mu\subseteq\lambda$:
	\begin{equation}
		\label{eq:general_skew_SYT_sum}
		\mathsf{Z}_\lambda(\lambda)
		\sum_{T\in \ssp\SYT(\lambda/\mu)}
		\prod_{k=1}^{|\lambda/\mu|}
		\frac{\mathsf{C}_{T[=k]}}{\mathsf{p}_{T[< k]}(\lambda)}
		=
		\mathsf{Z}_\mu(\lambda).
	\end{equation}
	Here, for a skew Young diagram $T^{-1}[\geq k] = \la/\nu$
	occupied by entries $\geq k$ in a standard tableau $T\in \ssp\SYT(\lambda/\mu)$,
	we set $T^{-1}[<k]=\nu$ (by agreement, $T^{-1}[<1]=\mu$). 
\end{proposition}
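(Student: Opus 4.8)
The plan is to prove \eqref{eq:general_skew_SYT_sum} by induction on the size $|\lambda/\mu|$ of the skew shape, keeping $\lambda$ fixed. The base case is $\mu=\lambda$: here $\SYT(\lambda/\mu)$ consists only of the empty tableau, the product over $k$ is empty and equals $1$, and the claimed identity reads $\mathsf{Z}_\lambda(\lambda)=\mathsf{Z}_\lambda(\lambda)$. No division by zero occurs because the product is empty, so the fact that $\mathsf{p}_\lambda(\lambda)=0$ is harmless.

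For the inductive step, fix $\mu\subsetneq\lambda$ and assume the identity for every $\nu$ with $\mu\subsetneq\nu\subseteq\lambda$ (equivalently, for every skew shape strictly smaller than $\lambda/\mu$). Since $\mu\ne\lambda$ we have $\mathsf{p}_\mu(\lambda)\ne 0$, so the Pieri rule \eqref{eq:pieri_rule_general} can be solved for $\mathsf{Z}_\mu(\lambda)$:
\begin{equation*}
	\mathsf{Z}_\mu(\lambda)=\frac{1}{\mathsf{p}_\mu(\lambda)}\sum_{\substack{\nu=\mu+\square\\ \nu\subseteq\lambda}}\mathsf{C}_{\nu/\mu}\,\mathsf{Z}_\nu(\lambda).
\end{equation*}
I would then substitute the inductive hypothesis for each $\mathsf{Z}_\nu(\lambda)$ on the right, obtaining
\begin{equation*}
	\mathsf{Z}_\mu(\lambda)=\frac{\mathsf{Z}_\lambda(\lambda)}{\mathsf{p}_\mu(\lambda)}\sum_{\substack{\nu=\mu+\square\\ \nu\subseteq\lambda}}\mathsf{C}_{\nu/\mu}\sum_{T'\in\ssp\SYT(\lambda/\nu)}\ \prod_{j=1}^{|\lambda/\nu|}\frac{\mathsf{C}_{T'[=j]}}{\mathsf{p}_{T'[<j]}(\lambda)}.
\end{equation*}

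The remaining task is to recognize this as $\mathsf{Z}_\lambda(\lambda)$ times the tableau sum in \eqref{eq:general_skew_SYT_sum}. For that I would use the standard peeling bijection: a tableau $T\in\SYT(\lambda/\mu)$ is the same data as the box $b$ carrying the entry $1$ — which is an addable box of $\mu$ lying inside $\lambda$, so that $\nu\coloneqq\mu\cup\{b\}=T[<2]$ satisfies $\nu\subseteq\lambda$ — together with the tableau $T'\in\SYT(\lambda/\nu)$ obtained from $T$ by deleting $b$ and decreasing every remaining entry by $1$. Under this bijection $T[<1]=\mu$ and $T[=1]=\nu/\mu$, while $T[<k]=T'[<k-1]$ and $T[=k]=T'[=k-1]$ for $k\ge2$, hence
\begin{equation*}
	\prod_{k=1}^{|\lambda/\mu|}\frac{\mathsf{C}_{T[=k]}}{\mathsf{p}_{T[<k]}(\lambda)}=\frac{\mathsf{C}_{\nu/\mu}}{\mathsf{p}_\mu(\lambda)}\ \prod_{j=1}^{|\lambda/\nu|}\frac{\mathsf{C}_{T'[=j]}}{\mathsf{p}_{T'[<j]}(\lambda)}.
\end{equation*}
Summing this over $T\in\SYT(\lambda/\mu)$ — that is, first over the choice of $\nu$ and then over $T'\in\SYT(\lambda/\nu)$ — and multiplying by $\mathsf{Z}_\lambda(\lambda)$ reproduces exactly the displayed expression for $\mathsf{Z}_\mu(\lambda)$ above, closing the induction.

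I do not expect a serious obstacle; the content is entirely bookkeeping. The two points needing care are: matching the Pieri summation index ($\nu=\mu+\square$ with $\nu\subseteq\lambda$) with the decomposition of $\SYT(\lambda/\mu)$ according to the position of the entry $1$ — both are indexed by the addable boxes of $\mu$ contained in $\lambda$, and every such box is realized since any skew shape admits a standard filling — and tracking the index shift $k\mapsto k-1$ so that the $\mathsf{C}$- and $\mathsf{p}$-factors line up after peeling. Since this is the same argument as \cite[Proposition 3.2]{MolevSagan1999} in the factorial-Schur case, the scheme is reliable.
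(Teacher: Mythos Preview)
Your proposal is correct and is essentially the same argument as the paper's: both iterate the Pieri rule, dividing by $\mathsf{p}_\mu(\lambda)$ and expanding $\mathsf{Z}_\mu(\lambda)$ in terms of the $\mathsf{Z}_\nu(\lambda)$ until reaching $\nu=\lambda$, with the vanishing property terminating the process. The paper states this in two sentences, while you have written out the induction and the peeling bijection explicitly, but the content is identical.
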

\begin{remark}[More general Pieri rule]
	\label{rem:pieri_rule_more_general}
	The Pieri rule may be extended to add more than one box at a time.
	If $S^+(\mu)$ denotes the set of allowed Young diagrams $\nu\supset \mu$, then 
	\eqref{eq:pieri_rule_general} can be generalized to
	\begin{equation}
	\label{eq:pieri_rule_more_general}
		\mathsf{p}_\mu(\lambda)\ssp
		\mathsf{Z}_\mu(\lambda)
		= \sum_{\nu\in S^+(\mu)} \mathsf{C}_{\nu/\mu}\ssp
		\mathsf{Z}_\nu(\lambda).
	\end{equation}
	If we apply \eqref{eq:pieri_rule_more_general} as in 
	\Cref{prop:general_skew_SYT_sum},
	we obtain a sum over plane partitions $T$ whose equal entries occupy shapes  contained in $S^+(\al)/\al$. 
	This can produce sums over
	semistandard Young tableaux (SSYTs), 
	strict increasing tableaux (SIT) as in \cite{MPP4GrothExcited}, or
	other types of tableaux of skew shape $\lambda/\mu$.
	We present an example in \Cref{app:SSYT_variant}.
\end{remark}
\begin{proof}[Proof of \Cref{prop:general_skew_SYT_sum}]
	We have from \eqref{eq:pieri_rule_general}:
	\begin{equation*}
		\mathsf{Z}_\mu(\lambda)=
		\sum_{\nu=\mu+\square} \frac{\mathsf{C}_{\nu/\mu}}{\mathsf{p}_\mu(\lambda)}
		\ssp
		\mathsf{Z}_\nu(\lambda).
	\end{equation*}
	Continuing this process for each $\mathsf{Z}_\nu(\lambda)$, we add more boxes to the
	Young diagrams % indexing $\mathsf{Z}_{\kappa}(\lambda)$,
	until we reach the Young diagram $\lambda$.
	Then we cannot add any more boxes due to the vanishing property of the interpolation symmetric functions.
	As a result, we obtain the desired sum over the skew standard Young tableaux of shape $\lambda/\mu$.
	This completes the proof.
\end{proof}

Under very general assumptions,
\Cref{prop:general_skew_SYT_sum}
represents $\mathsf{Z}_\mu(\lambda)$
as a 
sum over SYTs as 
in the left-hand side of the multivariate hook-formula~\eqref{eq:general2}.
The right-hand side 
$\mathsf{Z}_\mu(\lambda)$
in formulas like~\eqref{eq:general2}
usually has a combinatorial interpretation.
Finding such an interpretation is a problem on its own.

\subsection{Interpolation symmetric polynomials}
\label{sub:general_formalism_examples}

Many examples of families $\{\mathsf{Z}_\mu(\lambda)\}$
satisfying vanishing and Pieri rule
are provided by \defn{interpolation polynomials}
$F_\mu(x_1,\ldots,x_n)$
appearing in the theory of symmetric functions.
Specializing the variables, we obtain
\begin{equation}
	\label{eq:Z_mu_as_specialization}
	\mathsf{Z}_\mu(\lambda)=F_\mu(x_1(\lambda),\ldots,x_n(\lambda)).
\end{equation}
Interpolation properties of $F_\mu(x_1,\ldots,x_n)$
lead to the vanishing, and the Pieri rule
is inherited from symmetric polynomials.
Examples based on interpolation symmetric polynomials
include:
\begin{enumerate}[$\bullet$]
	\item 
		\defn{Factorial Schur polynomials}
		$s_\mu(x_1,\ldots,x_n \mid a)$ 
		\eqref{eq:weyl}
		is the main example we consider in the present paper.
		Note that in both our approaches (via integrals and vertex models),
		we reprove the required 
		properties of factorial Schur polynomials from scratch, without
		using the theory of symmetric functions.
		From this point of view, the essence of
		the skew hook-length
		formula~\eqref{eq:general2}
		is the identification of 
		the specialization
		$\mathsf{Z}_\mu(\lambda)$ 
		\eqref{eq:Z_mu_as_specialization}, where $x_i(\lambda)=a_{\lambda_i+n-i+1}$, $1\le i\le n$
		(the $a$'s are the shifts in the factorial Schur polynomials),
		with a sum over 
		excited diagrams. The two proofs of this identification 
		we present here
		did not explicitly appear in the literature. 

	\item \defn{Interpolation Macdonald polynomials}
		$I_\mu(x_1,\ldots,x_n ; q,t)$
		and the corresponding symmetric functions 
		\cite{knop1997,
		knop1997symmetric,
		sahi1996interpolation,
		okounkov_newton_int,
		okounkov1998shifted},
		see also \cite{olshanski2019interpolation}.
		For interpolation Macdonald polynomials, 
		the quantities 
		\begin{equation}
			\label{eq:macdonald_interpolation_binomial_coefficients}
			\begin{bmatrix}
				\lambda\\ \mu
			\end{bmatrix}_{q,t}
			=
			\frac{
			I_\mu(x^{(q,t)}_1(\lambda),\ldots,x^{(q,t)}_n(\lambda);q,t)}
			{I_\mu(x^{(q,t)}_1(\mu),\ldots,x^{(q,t)}_n(\mu);q,t)}
		\end{equation}
		are multivariate $(q,t)$-analogues of the binomial coefficients
		\cite{okounkov1997binomial}. 
		Note that the normalization 
		in \eqref{eq:macdonald_interpolation_binomial_coefficients}
		differs from the one in our sum over SYTs \eqref{eq:general_skew_SYT_sum}.
		We discuss the Macdonald example in further 
		detail in
		\Cref{sub:Macdonald_hook_length}.
		In particular, the 
		specialization
		which ensures the interpolation 
		is defined 
		in \eqref{eq:a_lambda_Macdonald}.
		
	\item 
		\defn{Balanced elliptic interpolation functions}
		\cite{rains2006bcn}, see also
		\cite{coskun2006well}.

	\item 
		\defn{Factorial Hall-Littlewood polynomials} considered in
		\cite{nakagawa2023universal}.

	\item \defn{Factorial Grothendieck polynomials}
		\cite{mcnamara2006factorial}, see also~\cite{MPP4GrothExcited}.

	\item \defn{Inhomogeneous spin $q$-Whittaker polynomials}
		\cite{korotkikh2024representation}.
\end{enumerate}

\section{Proof by contour integrals}
\label{sec:contour_integrals}

\subsection{A family of integrals}

Let $f_j(u\mid a)$ be a family of polynomials in one variable $u$ depending on
parameters $a=(a_1,a_2,\ldots)$.
Define
the following $n$-fold contour integral indexed by a partition $\mu$ 
with $n\ge \ell(\mu)$:
\begin{equation}\label{eq:integral_def}
	F_\mu(x \mid a)=
	F_\mu(x_1,\ldots,x_n  \mid a)
	\coloneqq  (-1)^{\binom{n}{2}} \frac{1}{(2\pi \sqrt{-1} )^n }
	\oint_\gamma\ldots \oint_{\gamma} 
	\prod_{i=1}^n \frac{ f_{\mu_i+n-i}(u_i\mid a)}{\prod_{j=1}^n (u_i-x_j)} \ssp
	\Delta(u)\ssp du_1\cdots du_n,
\end{equation}
where $\gamma$ 
is a positively oriented contour which
contains all the poles $x_1,\ldots,x_n$,
and $\Delta(u)$ is the Vandermonde determinant \eqref{eq:Vandermonde}.

In this section, we evaluate the integral 
\eqref{eq:integral_def} in two ways, 
via the residues at the $x_j$'s, and 
via the residues at $u_i=\infty$, $1\le i\le n$.
Choosing appropriate polynomials $f_j$ 
produces a proof of the skew hook-length formula~\eqref{eq:general2}
(\Cref{thm:general}). Namely, 
throughout the rest of this section, we set
\begin{equation}\label{eq:f_factorial_schur}
	f_j(u) \coloneqq  f_j(u \mid a)=  \prod_{i=1}^{j}(u - a_i).
\end{equation} 
Taking other polynomials $f_j$ in \eqref{eq:integral_def}
produces generalizations of the skew hook-length formula~\eqref{eq:general2}.
We discuss one such generalization in \Cref{app:SSYT_variant}.

\subsection{Determinant in disguise?}

The integral $F_\mu$ defined via \eqref{eq:integral_def}--\eqref{eq:f_factorial_schur}
can be identified with the factorial Schur polynomial $s_\mu$ \eqref{eq:weyl}.
This fact is not needed for our proof of \eqref{eq:general2},
but we include it for completeness.
\begin{theorem}\label{thm:integral}
	With $f_j(u)$ defined in~\eqref{eq:f_factorial_schur} we have that
	$F_\mu(x_1,\ldots,x_n \mid a) = s_\mu(x_1,\ldots,x_n \mid a)$,
	where $s_\mu$ is the factorial Schur polynomial 
	given by the determinantal formula
	\eqref{eq:weyl}.
\end{theorem}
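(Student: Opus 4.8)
The goal is to show the contour integral $F_\mu(x_1,\dots,x_n\mid a)$ defined by \eqref{eq:integral_def}–\eqref{eq:f_factorial_schur} equals the determinantal factorial Schur polynomial \eqref{eq:weyl}. The natural approach is to evaluate the multiple contour integral by residues at the simple poles $u_i = x_j$ and recognize the result as a Cauchy–Binet / Laplace-type expansion of a determinant. First I would observe that the integrand factors as a product over $i$ of $\frac{f_{\mu_i+n-i}(u_i\mid a)}{\prod_j(u_i-x_j)}$ times the Vandermonde $\Delta(u) = \det[u_i^{\,n-j}]$. Expanding $\Delta(u)$ as $\sum_{\sigma\in S_n}\sgn(\sigma)\prod_i u_i^{\,n-\sigma(i)}$ decouples the integral into a product of one-dimensional integrals, so that
\[
	F_\mu(x\mid a) = (-1)^{\binom n2}\sum_{\sigma\in S_n}\sgn(\sigma)\prod_{i=1}^n \left(\frac{1}{2\pi\sqrt{-1}}\oint_\gamma \frac{f_{\mu_i+n-i}(u\mid a)\,u^{\,n-\sigma(i)}}{\prod_{j=1}^n(u-x_j)}\,du\right).
\]

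**The one-variable residue computation.** The key lemma is the evaluation of the single integral $\frac{1}{2\pi\sqrt{-1}}\oint_\gamma \frac{f_k(u\mid a)\,u^{\,m}}{\prod_{j=1}^n(u-x_j)}\,du$ where $k = \mu_i+n-i \le n-1$ (since $\mu_i \le \mu_1$ and... actually one must check $k$ can be up to $\mu_1+n-1$, which may exceed $n-1$; but then $f_k$ has degree $k \ge n$ and combined with $u^m$, $m\le n-1$, the integrand may not vanish at infinity — however the residue-at-poles evaluation is still valid since $\gamma$ encircles all $x_j$). By the residue theorem the integral equals $\sum_{j=1}^n \frac{f_k(x_j\mid a)\,x_j^{\,m}}{\prod_{l\ne j}(x_j-x_l)}$. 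Summing over $\sigma$ and reassembling, I would get $F_\mu(x\mid a)$ as a sum over functions $j\colon\{1,\dots,n\}\to\{1,\dots,n\}$ of $\prod_i \frac{f_{\mu_i+n-i}(x_{j(i)}\mid a)}{\prod_{l\ne j(i)}(x_{j(i)}-x_l)}$ times $\sum_\sigma \sgn(\sigma)\prod_i x_{j(i)}^{\,n-\sigma(i)} = \det[x_{j(i)}^{\,n-j'}]_{i,j'}$, which vanishes unless $j$ is a bijection. For $j$ a permutation $\tau$, the determinant is $\sgn(\tau)\,\Delta(x)\cdot$(sign bookkeeping), and collecting terms yields $F_\mu(x\mid a) = \frac{1}{\Delta(x)}\det\bigl[f_{\mu_j+n-j}(x_i\mid a)\bigr]_{i,j=1}^n$, which is exactly \eqref{eq:weyl} after matching $f_{\mu_j+n-j}(x_i\mid a) = (x_i-a_1)\cdots(x_i-a_{\mu_j+n-j})$.

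**The main obstacle.** The genuinely delicate point is the sign and degree bookkeeping: tracking the $(-1)^{\binom n2}$ prefactor through the Vandermonde expansion and the residue sums, and confirming that the contributions from non-injective $j$ truly vanish (they do, because a repeated column kills the Vandermonde-type determinant $\det[x_{j(i)}^{\,n-j'}]$). A secondary subtlety is that when $\mu_i + n - i \ge n$ the one-variable integrand need not decay at $u=\infty$, but since we only ever evaluate residues at the finite poles $x_j$ enclosed by $\gamma$, the residue theorem applies verbatim and no residue at infinity is needed for this particular evaluation — this is exactly why Theorem~\ref{thm:integral} is separated from the two-way evaluation used later. I would therefore structure the proof as: (1) expand $\Delta(u)$ and decouple; (2) state and prove the one-variable residue formula; (3) reassemble, invoke vanishing of non-bijective terms; (4) match signs to recover \eqref{eq:weyl}.
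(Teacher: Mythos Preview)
Your proposal is correct and follows essentially the same approach as the paper: evaluate the integral by residues at the poles $u_i=x_j$, observe that non-bijective assignments contribute zero, and recognize the sum over permutations as the determinant $\frac{1}{\Delta(x)}\det[f_{\mu_i+n-i}(x_j)]$. The only cosmetic difference is that the paper keeps $\Delta(u)$ intact and takes the multivariable residue directly (so that a repeated pole $u_k=u_l=x_{\sigma(k)}$ kills $\Delta(u)$ on the spot), whereas you first expand $\Delta(u)$ as a signed sum and then argue the vanishing via a repeated-row determinant---but this is the same mechanism viewed from two sides.
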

\begin{proof}
	We evaluate the integral
	\eqref{eq:integral_def}
	by the residue formula at poles $u_i= x_{\sigma(i)}$ for all possible assignments of 
	the poles to the variables, which are encoded by 
	$\sigma \in \left\{ 1,\ldots,n  \right\}^n$. 
	Note that if $\sigma(k)=\sigma(l)$ for some $k\neq l$, then
	$$Res_{u=x_\sigma} = \frac{ f_{\mu_i+n-i}(x_{\sigma(i)})}{\prod_{j\neq \sigma(i)} (x_{\sigma(i)} -x_j)} \prod_{i<j} (x_{\sigma(i)} -x_{\sigma(j)} ) =0, $$
		as the Vandermonde factor vanishes. Thus, nonzero
		residues appear only when $\sigma$ is a permutation.
We then observe that 
\begin{equation*}
\begin{split}
    Res_{u=x_\sigma} &= \prod_i \frac{ f_{\mu_i+n-i}(x_{\sigma(i)})}{\prod_{j\neq \sigma(i)} (x_{\sigma(i)} -x_j)} \; \prod_{i<j} (x_{\sigma(i)} -x_{\sigma(j)} ) \\
    &= \prod_i f_{\mu_i+n-i}(x_{\sigma(i)}) \frac{\sgn(\sigma) \prod_{k<l} (x_k-x_l)}{ \prod_k \prod_{l\neq k} (x_k - x_l) }  \\
		&= \prod_i f_{\mu_i+n-i}(x_{\sigma(i)}) 
		\frac{\sgn(\sigma)}{ \prod_{k>l} (x_k - x_l) },
	\end{split}
\end{equation*}
and summing over all permutations $\sigma$ gives $\frac{1}{\Delta(x)} \det[f_{\mu_i+n-i}(x_j)]_{i,j=1}^n$. Identifying that determinant with~\eqref{eq:weyl} and $s_\mu(x\mid a)$
completes the proof.
\end{proof}

The rest of this section does not rely on 
\Cref{thm:integral}.

\subsection{Pieri rule}
\label{sub:pieri_rule_by_contour}

Here we show that the integrals $F_\mu$
\eqref{eq:integral_def}--\eqref{eq:f_factorial_schur}
satisfy a Pieri rule.
Let $\epsilon_i = (0^{i-1},1,0^{n-i})$ be the $i$-th elementary vector.
\begin{proposition}\label{cor:pieri}
    We have
		\begin{equation}
			\label{eq:pieri_integral_formula_identity}
    \sum_{i=1}^n F_{\mu+\epsilon_i}(x\mid a) = \left( \sum_{i=1}^n x_i - \sum_{i=1}^n a_{\mu_i+n-i+1}\right) F_\mu(x\mid a),
		\end{equation}
		where $F_{\mu+\epsilon_i}(x\mid a) =0$ if $\mu+\epsilon_i$ is not a partition (i.e., does not weakly decrease).
\end{proposition}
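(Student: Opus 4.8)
The plan is to prove the Pieri identity \eqref{eq:pieri_integral_formula_identity} directly from the integral representation \eqref{eq:integral_def}--\eqref{eq:f_factorial_schur}, without appealing to \Cref{thm:integral}. The starting observation is the recursion satisfied by the polynomials $f_j$: from \eqref{eq:f_factorial_schur} we have $f_{j+1}(u) = (u - a_{j+1}) f_j(u)$ for every $j \ge 0$. Thus, in the integrand of $F_{\mu+\epsilon_i}$, the $i$-th factor carries $f_{(\mu+\epsilon_i)_i + n - i}(u_i) = f_{\mu_i + n - i + 1}(u_i) = (u_i - a_{\mu_i+n-i+1})\, f_{\mu_i+n-i}(u_i)$, while all the other factors are unchanged from $F_\mu$. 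So summing over $i$ replaces $\prod_{i=1}^n f_{\mu_i+n-i}(u_i)$ in the integrand of $F_\mu$ by $\prod_{i=1}^n f_{\mu_i+n-i}(u_i) \cdot \sum_{i=1}^n (u_i - a_{\mu_i+n-i+1})$.

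The key step is then to handle the polynomial weight $\sum_{i=1}^n (u_i - a_{\mu_i+n-i+1}) = \bigl(\sum_i u_i\bigr) - \bigl(\sum_i a_{\mu_i+n-i+1}\bigr)$ inside the contour integral. The constant part $\sum_i a_{\mu_i+n-i+1}$ pulls straight out, producing the $-\sum_i a_{\mu_i+n-i+1}$ term on the right of \eqref{eq:pieri_integral_formula_identity}. For the remaining part we must show
\[
	(-1)^{\binom{n}{2}} \frac{1}{(2\pi\sqrt{-1})^n}
	\oint_\gamma \!\cdots\! \oint_\gamma
	\Bigl(\sum_{i=1}^n u_i\Bigr)
	\prod_{i=1}^n \frac{f_{\mu_i+n-i}(u_i)}{\prod_{j=1}^n (u_i - x_j)}\,
	\Delta(u)\, du_1\cdots du_n
	= \Bigl(\sum_{i=1}^n x_i\Bigr) F_\mu(x\mid a).
\]
I would prove this by expanding each $u_i$ in the numerator as $u_i = (u_i - x_i) + x_i$ — or more symmetrically, work with the residue expansion. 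Concretely, evaluating the left side by residues at $u_i = x_{\sigma(i)}$ over permutations $\sigma$ (exactly as in the proof of \Cref{thm:integral}; the non-permutation terms still vanish because of the $\Delta(u)$ factor, which is unaffected by the extra polynomial $\sum u_i$), the $\sum_i u_i$ becomes $\sum_i x_{\sigma(i)} = \sum_i x_i$, a $\sigma$-independent constant. Hence the residue sum factors as $\bigl(\sum_i x_i\bigr)$ times the residue sum computing $F_\mu(x\mid a)$, giving the claim. The vanishing clause — that $F_{\mu+\epsilon_i} = 0$ when $\mu+\epsilon_i$ fails to be weakly decreasing — is automatic: if $\mu_i + \epsilon_i$ violates $(\mu+\epsilon_i)_{i-1} \ge (\mu+\epsilon_i)_i$, then $\mu_{i-1} = \mu_i$, so $(\mu+\epsilon_i)_{i-1} + n - (i-1) = (\mu+\epsilon_i)_i + n - i$, meaning two of the degrees $\{(\mu+\epsilon_i)_k + n - k\}$ coincide; the determinant-style residue computation then shows $F_{\mu+\epsilon_i}$ has two equal rows (equivalently, the integrand is symmetric in $u_{i-1}, u_i$ while $\Delta(u)$ is antisymmetric), so it vanishes. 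Alternatively one notes $F_\nu$ for general integer vectors $\nu$ is antisymmetric under the shifted action $\nu_i + n - i$, forcing $F_{\mu+\epsilon_i}=0$ exactly in the degenerate case.

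I expect the main obstacle to be bookkeeping rather than conceptual: one must be careful that summing $F_{\mu+\epsilon_i}$ over \emph{all} $i$ from $1$ to $n$ — including those $i$ for which $\mu+\epsilon_i$ is not a partition — still corresponds to inserting the full polynomial $\sum_i (u_i - a_{\mu_i+n-i+1})$ into the integrand, using the vanishing clause to discard the spurious terms. Once that is set up cleanly, the identity $\sum_i u_i \mapsto \sum_i x_i$ under the residue symmetrization is immediate, and no genuine computation remains. This argument also makes transparent why the same mechanism works for other choices of $f_j$ in \eqref{eq:integral_def}: one only needs a one-step recursion $f_{j+1}(u) = g_j(u) f_j(u)$ with $g_j$ affine in $u$.
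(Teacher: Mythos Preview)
Your proposal is correct and follows essentially the same route as the paper: factor the recursion $f_{j+1}(u)=(u-a_{j+1})f_j(u)$ to insert $\sum_i(u_i-a_{\mu_i+n-i+1})$ into the integrand, then use that the only contributing residues are at permutations $u_i=x_{\sigma(i)}$ so that $\sum_i u_i$ becomes $\sum_i x_i$. The vanishing clause via antisymmetry of the integrand when two shifted parts coincide is likewise exactly the paper's argument.
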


The integral $F_\mu$ 
\eqref{eq:integral_def}--\eqref{eq:f_factorial_schur}
is defined for any sequence $\mu$ 
which not necessarily a partition.
Moreover, note that if $\mu_i +1=\mu_{i+1}$ for some $i$, 
then $\mu_i+n-i = \mu_{i+1}+n-(i+1)$, and so
the product of the $f_{\mu_i+n-i}$'s
contains two identical terms. Therefore, the 
integrand becomes antisymmetric in $u_i,u_{i+1}$
thanks to the factor $u_i-u_{i+1}$ coming from the Vandermonde.
Since all integration contours are the same, 
the integral vanishes when $\mu_i+1=\mu_{i+1}$.
Thus, in
\eqref{eq:pieri_integral_formula_identity},
only
the terms for which $\mu+\epsilon_i$ is a partition survive.

\begin{proof}[Proof of \Cref{cor:pieri}]
  Identity~\eqref{eq:pieri_integral_formula_identity} follows from 
	the computation:\footnote{Notation $\mathbf{1}_A$
	means the indicator function of the condition $A$.}
	\begin{equation*}
		\begin{split}
			&\sum_{i=1}^n F_{\mu+\epsilon_i}(x\mid a) 
			=
			\sum_{i=1}^n (-1)^{\binom{n}{2}} \frac{1}{(2\pi
			\sqrt{-1})^n} \oint_\gamma \cdots \oint_\gamma
			\prod_{j=1}^n \frac{f_{\mu_j + \mathbf{1}_{i=j} + n -
			j}(u_j \mid a)}{\prod_{k=1}^n (u_j - x_k)} \Delta(u) \ssp
			du_1 \cdots du_n
			\\ 
			&\hspace{10pt}=
			(-1)^{\binom{n}{2}} \frac{1}{(2\pi
			\sqrt{-1})^n} \oint_\gamma \cdots \oint_\gamma
			\prod_{j=1}^n \frac{f_{\mu_j + n -
			j}(u_j \mid a)}{\prod_{k=1}^n (u_j - x_k)} 
			\Bigl( \sum_{i=1}^{n}u_j-a_{\mu_i+n-i+1} \Bigr)
			\Delta(u) \ssp
			du_1 \cdots du_n
			\\ 
			&\hspace{10pt}=
			\Bigl(
				\sum_{i=1}^{n} x_i-
				\sum_{i=1}^{n} a_{\mu_i+n-i+1}
			\Bigr)F_\mu(x\mid a).
		\end{split}
	\end{equation*}
	In the first line, we used the fact that 
	\begin{equation*}
		 f_{\mu_j + \mathbf{1}_{i=j} + n - j} (u_j \mid a) = f_{\mu_j +
		n - j} (u_j \mid a) \times \left\{ \begin{array}{ll} (u_i -
		a_{\mu_i + n - i + 1}), & \text{if } j = i, \\ 1, & \text{if
		} j \ne i, \end{array} \right.
	\end{equation*}
	which implies that 
	\begin{equation}
		\label{eq:factorial_schur_pieri_proof_factoring}
		\sum_{i=1}^n \prod_{j=1}^n 
		f_{\mu_j + \mathbf{1}_{i=j} + n - j} (u_j \mid a)
		=
		\Bigl( \sum_{i=1}^{n}u_i-a_{\mu_i+n-i+1} \Bigr)
		\prod_{j=1}^n f_{\mu_j + n - j} (u_j \mid a).
	\end{equation}
	In the second line, 
	we used the fact that 
	nonzero residues appear only 
	at permutations:
	$u_i=x_{\sigma(i)}$, $1\le i\le n$.
	The latter implies that 
	$\sum_{i=1}^n u_i = \sum_{i=1}^{n} x_i$.
	This completes the proof.
\end{proof}
The observation \eqref{eq:factorial_schur_pieri_proof_factoring}
is the crux of the proof of \Cref{cor:pieri}.
We use a similar idea to get a generalization
of the Pieri rule (and the skew hook-length formula)
in \Cref{app:SSYT_variant}.

\subsection{Lattice paths and SSYTs}
\label{ss:ssyt}

Let us now evaluate the same integral
$F_\mu(x\mid a)$ \eqref{eq:integral_def}--\eqref{eq:f_factorial_schur}
using the residues at $u_i=\infty$.
We then
interpret the result in terms of weighted
non-intersecting lattice paths. 
We consider a slightly more general setup. 
Let $b=(b_1,b_2,\ldots)$ be a family of parameters,
the polynomials
$f_j(x\mid b)$
be defined by \eqref{eq:f_factorial_schur}, as before, and
$\mathsf{m}=(m_1,m_2,\ldots)$ be a sequence of nonnegative integers. We define
\begin{align}\label{eq:integral_general}
	F_{\mu,\mathsf{m}} (x \mid b)\coloneqq
	(-1)^{\binom{n}{2}} \frac{1}{(2\pi \sqrt{-1} )^n
	}
	\oint_\gamma
	\ldots 
	\oint_\gamma
	\prod_{i=1}^n \frac{ f_{\mu_i+m_i-i}(u_i\mid
	b)}{\prod_{j=1}^{m_i} (u_i-x_j)} \ssp\Delta(u)\ssp 
	du_1\cdots du_n,
\end{align}
where the contour $\gamma$ encompasses all the poles $x_1,\ldots,x_n$.
We recover the original definition 
\eqref{eq:integral_def} by setting $b_i=a_i$ and $m_j=n$ for all $i,j$.

\begin{proposition}\label{prop:general_jacoby_trudy}
    We have 
		\begin{equation}
			\label{eq:general_jacoby_trudy_identity}
			F_{\mu,\mathsf{m}} (x \mid b) 
			= 
			\det[ P_{i,j}^{\mu, \mathsf{m}}(x \mid b) ]_{i,j=1}^n,
		\end{equation}
    where 
		\begin{equation}
			\label{eq:general_jacoby_trudy}
			P_{i,j}^{\mu,\mathsf{m}}(x \mid b)\coloneqq  
			\sum_{r=0}^{\mu_i+j-i} (-1)^r 
			e_r(b_1,\ldots,b_{\mu_i+m_i-i})
			\ssp
			h_{\mu_i+j-i-r}(x_1,\ldots,x_{m_i}).
		\end{equation}
\end{proposition}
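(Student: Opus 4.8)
The plan is to evaluate the integral \eqref{eq:integral_general} by collecting residues at $u_i=\infty$ instead of at the finite poles $x_1,\dots,x_{m_i}$. For fixed values of the remaining variables the integrand is rational in $u_i$ with all of its finite poles inside $\gamma$; deforming $\gamma$ to a large circle and expanding there in a Laurent series, one sees that $\tfrac{1}{2\pi\sqrt{-1}}\oint_\gamma(\cdots)\,du_i$ equals the coefficient of $u_i^{-1}$ in that expansion. First I would write the Vandermonde as a determinant, $\Delta(u)=\det[u_i^{n-j}]_{i,j=1}^n=\sum_{\sigma\in S_n}\sgn(\sigma)\prod_{i=1}^n u_i^{n-\sigma(i)}$, substitute this into \eqref{eq:integral_general}, and interchange the finite sum with the integral. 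The multiple integral then factors over $i$, giving
\[
	F_{\mu,\mathsf{m}}(x\mid b)=(-1)^{\binom{n}{2}}\sum_{\sigma\in S_n}\sgn(\sigma)\prod_{i=1}^n I_{i,\sigma(i)},
	\qquad
	I_{i,k}\coloneqq \frac{1}{2\pi\sqrt{-1}}\oint_\gamma \frac{f_{\mu_i+m_i-i}(u\mid b)\,u^{n-k}}{\prod_{j=1}^{m_i}(u-x_j)}\,du .
\]

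Next I would compute each $I_{i,k}$ by the coefficient extraction just described. Using \eqref{eq:f_factorial_schur} and the generating function for the $h_s$, one has, for $|u|$ large,
\[
	f_{\mu_i+m_i-i}(u\mid b)=\sum_{r\ge0}(-1)^r e_r(b_1,\dots,b_{\mu_i+m_i-i})\,u^{\mu_i+m_i-i-r},
	\qquad
	\prod_{j=1}^{m_i}\frac{1}{u-x_j}=u^{-m_i}\sum_{s\ge0}h_s(x_1,\dots,x_{m_i})\,u^{-s};
\]
multiplying these and reading off the coefficient of $u^{-1}$ (that is, solving $n-k+(\mu_i+m_i-i-r)-m_i-s=-1$ for $s$) gives
\[
	I_{i,k}=\sum_{r\ge0}(-1)^r e_r(b_1,\dots,b_{\mu_i+m_i-i})\,h_{\mu_i+n-i-k+1-r}(x_1,\dots,x_{m_i}),
\]
with the convention $h_s\coloneqq0$ for $s<0$. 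Substituting $k=n+1-j$ turns the subscript into $\mu_i+j-i-r$ and makes the sum truncate at $r=\mu_i+j-i$, so comparing with \eqref{eq:general_jacoby_trudy} gives $I_{i,n+1-j}=P^{\mu,\mathsf{m}}_{i,j}(x\mid b)$.

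Finally I would reindex the permutation sum by $\tau(i)\coloneqq n+1-\sigma(i)$. As $\sigma$ ranges over $S_n$ so does $\tau$; the reversal $k\mapsto n+1-k$ has $\binom{n}{2}$ inversions, so $\sgn(\tau)=(-1)^{\binom{n}{2}}\sgn(\sigma)$, while $I_{i,\sigma(i)}=I_{i,n+1-\tau(i)}=P^{\mu,\mathsf{m}}_{i,\tau(i)}$. Hence
\[
	F_{\mu,\mathsf{m}}(x\mid b)=(-1)^{\binom{n}{2}}\sum_{\tau\in S_n}(-1)^{\binom{n}{2}}\sgn(\tau)\prod_{i=1}^n P^{\mu,\mathsf{m}}_{i,\tau(i)}=\det\bigl[P^{\mu,\mathsf{m}}_{i,j}(x\mid b)\bigr]_{i,j=1}^n ,
\]
since $(-1)^{2\binom{n}{2}}=1$, which is precisely \eqref{eq:general_jacoby_trudy_identity}. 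The whole argument is essentially bookkeeping, so I do not expect a genuine obstacle: the only delicate points are the index shift $k\leftrightarrow n+1-j$ aligning $I_{i,k}$ with $P^{\mu,\mathsf{m}}_{i,j}$, and the (pleasant) fact that the prefactor $(-1)^{\binom{n}{2}}$ in \eqref{eq:integral_general} exactly cancels the sign of that reversal, so the real danger is merely a sign or off-by-one slip. As a downstream remark, expanding this determinant via the Gessel-Viennot formula will re-express it as a weighted enumeration of non-intersecting lattice paths inside $\lambda$, which is the bridge to the right-hand side of \eqref{eq:general2}.
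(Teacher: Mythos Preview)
Your proof is correct and follows essentially the same approach as the paper: evaluating the integral at infinity via generating functions for $e_r$ and $h_s$, together with the permutation expansion of the Vandermonde. The only cosmetic difference is that the paper performs the explicit substitution $u_i=1/v_i$ and takes residues at $v_i=0$ (which absorbs your reindexing $j=n+1-\sigma(i)$ into the transformation $\Delta(1/v)\rightsquigarrow\Delta(v)$), whereas you extract the coefficient of $u_i^{-1}$ directly and then reindex the permutation; the content is identical.
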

\begin{proof}
	The integral 
	\eqref{eq:integral_general}
	becomes, after changing the variables $u_i=1/v_i$:
	\begin{equation*}
		 \frac{1}{(2\pi \sqrt{-1} )^n
		}
		\oint_{\gamma'}
		\ldots 
		\oint_{\gamma'}
		(-1)^{\binom{n}{2}}
		\prod_{i=1}^n \frac{ f_{\mu_i+m_i-i}(1/v_i\mid b)}
		{(-v_i^2)\prod_{j=1}^{m_i} (1/v_i-x_j)} \ssp\Delta(1/v)\ssp 
		dv_1\cdots dv_n,
	\end{equation*}
	where the integration contour $\gamma'$ goes around 
	$0$ in the negative direction, and leaves the 
	poles $x_1^{-1},\ldots,x_n^{-1}$ outside.
	The integrand becomes
	\begin{equation}
		\label{eq:general_jacoby_trudy_proof_integrand}
		\begin{split}
			&
			(-1)^{\binom{n}{2}+n}
			\Delta(1/v)
			\prod_{i=1}^n \frac{(1-v_i b_1)\ldots (1-v_i b_{\mu_i+m_i-i})v_i^{-\mu_i-m_i+i}}
			{v_i^{2-m_i} (1- v_ix_1)\ldots(1-v_ix_{m_i}) }
			\\&\hspace{120pt}=
			(-1)^n
			\Delta(v)
			\prod_{i=1}^n 
			\frac{1}{v_i^{\mu_i+n-i+1}}\cdot
			\frac{(1-v_i b_1)\ldots (1-v_i b_{\mu_i+m_i-i})}
			{(1-v_ix_1)\ldots(1-v_ix_{m_i}) }
			.
		\end{split}
	\end{equation}
	Expanding the Vandermonde determinant as
	$\Delta(v)=\sum_{\sigma\in S_n}\sgn(\sigma) v_1^{n-\sigma_1}\ldots v_n^{n-\sigma_n}$,
	and further using the generating functions for the elementary and complete symmetric functions, we can continue \eqref{eq:general_jacoby_trudy_proof_integrand} as
	\begin{equation*}
		\begin{split}
			=(-1)^n\sum_{\sigma\in S_n}\sgn(\sigma)
			\prod_{i=1}^n\frac{1}{v_i^{\mu_i+\sigma_i-i+1}}
			\sum_{r,k=0}^{\infty} u_i^{r+k}(-1)^r e_r(b_1,\ldots,b_{\mu_i+m_i-i}) h_k(x_1,\ldots,x_{m_i}).
		\end{split}
	\end{equation*}
	Taking the residue at all $v_i=0$ (note that $(-1)^n$ is absorbed
	by changing the orientation of the contours), we arrive at the condition
	$k=\mu_i+\sigma_i-i-r$. Replacing $\sigma_i$ by $j$ leads to a determinant
	of the $P_{i,j}^{\mu,\mathsf{m}}$'s,
	which coincides with the desired expression.
\end{proof}

\begin{remark}
	If $b=(0,0,\ldots)$ and $\mathsf{m}=(n,n,\ldots)$, we have $F_{\mu,\mathsf{m}}(x \mid b) =s_\mu(x_1,\ldots,x_n)$.
	Since in this case $P_{i,j}^{\mu,\mathsf{m}} = h_{\mu_i+j-i}(x_1,\ldots,x_n)$, 
	we recover the classical Jacobi-Trudy identity
	\cite[(II.3.4)]{Macdonald1995}.
\end{remark}

Let us interpret \eqref{eq:general_jacoby_trudy}
as a partition function of weighted lattice paths. 

% Here we will give a self-contained proof of the tableaux formula~\eqref{eq:combin}.
% We evaluate the integral~\eqref{eq:integral} at the poles outside of the contours, these are only the poles at infinity. Substituting variables $1/u_i$ instead of $u_i$ these become the poles at 0 and the sign is absorbed in $\Delta(u)$:
% \begin{align*}
%   s_\mu(x\mid a)= \frac{ 1 }{(2\pi \sqrt{-1})^n} \oint \frac{\Delta(u_n,\ldots,u_1)}{\prod_i \prod_j (1-x_j u_i) } \prod_i \frac{(1-a_1u_i) \cdots (1-a_{\mu_i+n-i} u_i) }{u_i^{\mu_i+n-i-1}} d(u_1^{-1})\cdots d(u_n^{-1}) \\
%   = \sum_{\sigma} sgn(\sigma) \prod_{i=1}^n Res_{u_i=0} \frac{1}{u_i^{\mu_i-i+1 +\sigma_i}} \frac{(1-a_1 u_i)\cdots (1-a_{\mu_i+n-i} u_i)}{(1-x_1u_i)\cdots(1-x_n u_i)},
% \end{align*}
% where we expanded the Vandermonde as $\Delta(u_n,\ldots,u_1) = \sum_\sigma sgn(\sigma) u_1^{n-\sigma_1} \cdots u_n^{n-\sigma_n}$. We have that
% \begin{align}
%     P(i,\sigma_i)\coloneqq  Res_{u=0} \frac{1}{u^{\mu_i-i+1 +\sigma_i}} \frac{(1-a_1 u)\cdots (1-a_{\mu_i+n-i} u)}{(1-x_1u)\cdots(1-x_n u)} \\
%     = \sum_{r=0}^{\mu_i +\sigma_i-i} (-1)^r e_r(a_1,\ldots,a_{\mu_i+n-i}) h_{\mu_i+\sigma_i-i -r}(x_1,\ldots,x_n)
% \end{align}

% Thus
% $$s_\mu(x\mid a) = \det[ P(i,j)]_{i,j=1}^n,$$
% which specializes to the classical Jacobi-Trudi for Schur functions when $a=0$. 

\begin{lemma}\label{lem:lattice_paths}
	Let $x_1,x_2,\ldots$ be indeterminates and $\ldots,b_{-1},b_0,b_1,\ldots$ be parameters. Consider directed lattice paths $L$ starting at $(-s+1,1)$ and ending at $(k+1,n)$, which make up and right steps 
	(several such paths are in \Cref{fig:ssyt_paths}, right).
	To each horizontal (right) step $(r,t)\to (r+1,t)$ we assign the weight
	$w(r,t): = x_t - b_{r+t}$. Define the weight
	of a path by $w(L)\coloneqq \prod_{u \in L} w(u)$, where 
	the product is over all horizontal steps of $L$.
	Then
	\begin{equation}
		\label{eq:weighted_path_sum_as_P}
		\sum_{L: (-s+1,1) \to (k+1,n)}  w(L) 
		=
		\sum_{r=0}^{k+i} 
		(-1)^r 
		e_r(b_{-i+2},\ldots,b_0,b_1,\ldots,b_{k+n})
		\ssp 
		h_{k+i-r}(x_1,\ldots,x_n).
	\end{equation}
\end{lemma}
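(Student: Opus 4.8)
The plan is to recognize the left-hand side as a weighted path count that factors through the transfer-matrix/recursion on the vertical coordinate $t$, and then match it with the explicit generating-function expression on the right. First I would set up the bijective picture: a lattice path $L$ from $(-s+1,1)$ to $(k+1,n)$ decomposes uniquely according to the number of horizontal steps it takes in each row $t=1,\dots,n$; since the path only moves up and right, in row $t$ it occupies a contiguous block of horizontal edges, and the starting abscissa in row $t+1$ equals the ending abscissa in row $t$. Writing $a_1\le a_2\le\cdots\le a_n$ for the $x$-coordinates at which the path leaves rows $1,\dots,n$ (so the path has $a_t-a_{t-1}$ horizontal steps in row $t$, with $a_0=-s+1$ and $a_n=k+1$ forced at the ends but the intermediate $a_t$ free), the weight $w(L)=\prod_{t=1}^n\prod_{r=a_{t-1}}^{a_t-1}(x_t-b_{r+t})$. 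Summing a geometric-type series over the free intermediate heights is exactly a complete-homogeneous expansion.

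The cleanest route, though, is to avoid re-deriving this from scratch and instead invoke \Cref{prop:general_jacoby_trudy}: the right-hand side of \eqref{eq:weighted_path_sum_as_P} is visibly an instance of $P^{\mu,\mathsf m}_{i,j}(x\mid b)$ after a reindexing of the parameter sequence $b$. Concretely, I would shift $b$ so that the window $b_{-i+2},\dots,b_{k+n}$ appearing on the right corresponds to $e_r(b_1,\dots,b_{\mu_i+m_i-i})$ with $m_i=n$, and the window $x_1,\dots,x_n$ of the $h$-factor corresponds to $h_{\mu_i+j-i-r}$ with $\mu_i+j-i=k+i$; one reads off a consistent choice, e.g. $\mu_i+j-i=k+i$ and $\mu_i+m_i-i=k+n$. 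Then the claim reduces to the single-determinant-entry identity that the $P$-entry equals a weighted path count with the stated boundary points — which is a standard fact (the Lindström–Gessel–Viennot interpretation of Jacobi–Trudi), and which I would prove directly by the row-decomposition above rather than citing it.

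So the core computation I would actually carry out is: fix the starting point $(-s+1,1)$ and endpoint $(k+1,n)$; parametrize $L$ by the sequence $-s+1=a_0\le a_1\le\cdots\le a_{n-1}\le a_n=k+1$ of abscissae at which $L$ passes between consecutive rows; observe
\begin{equation*}
  w(L)=\prod_{t=1}^{n}\ \prod_{c=a_{t-1}+1}^{a_{t}}\bigl(x_t-b_{\,c+t-1}\bigr).
\end{equation*}
Now sum over all such sequences. The generating function $\sum_{a}\prod_t(\cdots)$ telescopes: writing each row-factor via the generating function $\prod_{i}(1-zx_i)^{-1}=\sum_r z^r h_r(x)$ after clearing denominators with $\prod_i(1+z b_i)=\sum_r z^r e_r(b)$ is not quite it — instead one expands $\prod_{c}(x_t-b_{c+t-1})$ directly and sums the resulting telescoping product over the free $a_t$'s. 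The key algebraic lemma is the elementary identity
\begin{equation*}
  \sum_{\substack{a_0\le a_1\le\cdots\le a_n\\ a_0,a_n\ \mathrm{fixed}}}\ \prod_{t=1}^n\prod_{c=a_{t-1}+1}^{a_t}\bigl(x_t-b_{c+t-1}\bigr)
  =\sum_{r\ge0}(-1)^r e_r\!\bigl(b_{a_0+1},\dots,b_{a_n+n-1}\bigr)\,h_{(a_n-a_0)+(n-1)-r}(x_1,\dots,x_n),
\end{equation*}
which one checks by induction on $n$ (the $n=1$ case is the expansion of a single product $\prod_c(x_1-b_c)$ in terms of $e$'s; the inductive step peels off row $n$ and uses the Pieri-type recursion $h_k(x_1,\dots,x_n)=\sum_{m}x_n^{m}h_{k-m}(x_1,\dots,x_{n-1})$ together with the analogous split of $e_r$ on the $b$-window). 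Substituting $a_0=-s+1$, $a_n=k+1$ gives $a_n-a_0+n-1=k+s+n-1$; so a final bookkeeping check of the index ranges — matching $k+i$ on the displayed right-hand side against $k+s$ here (i.e. identifying the parameter $i$ in the statement with $s$, which is consistent with the $b$-window $b_{-i+2},\dots,b_{k+n}$ having $k+n+i-1$ entries) — completes the proof.

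\textbf{Main obstacle.} The genuine content is the telescoping identity above and getting every index window to line up: the statement mixes the path-geometry parameters $s,k,n$ with a floating index $i$ that must be pinned down, and one must be careful that the $b$-subscripts $r+t$ along horizontal steps produce exactly the consecutive block $b_{-i+2},\dots,b_{k+n}$ claimed, with the right total length, rather than an off-by-one shifted window. Everything else (the bijection between paths and weakly increasing abscissa-sequences, and the $e$/$h$ expansions) is routine.
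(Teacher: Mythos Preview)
Your approach is different from the paper's: rather than inducting on $n$, the paper gives a direct monomial bijection. Expanding each factor $(x_t-b_{r+t})$ on the left and each product $e_r\cdot h_{\ldots}$ on the right yields monomials $(-1)^r b_{i_1}\cdots b_{i_r}\,x_{j_1}\cdots x_{j_m}$ on both sides; since the diagonal index $r+t$ is strictly increasing along any up-right path, the selected $b$-subscripts are automatically distinct, and the paper writes down an explicit bijection between pairs (path, choice of $x$- or $b$-term at each horizontal step) and pairs (subset of $b$-indices, multiset of $x$-indices). No induction or recursion is used.

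Your inductive route can be made to work, but as written there are two concrete problems. First, the displayed telescoping identity has the wrong $h$-index: the path has exactly $a_n-a_0$ horizontal steps, so by degree count the index must be $(a_n-a_0)-r$, not $(a_n-a_0)+(n-1)-r$; with your version the final bookkeeping cannot close (you would be matching $k+s$ against $k+s+n-1$). Second, the inductive step is not the routine ``analogous split of $e_r$ on the $b$-window'' you suggest. After peeling off row $n$, the inductive hypothesis produces the window $b_{a_0+1},\dots,b_{a_{n-1}+n-2}$ while the row-$n$ product contributes $b_{a_{n-1}+n},\dots,b_{a_n+n-1}$; the entry $b_{a_{n-1}+n-1}$ is \emph{missing}, and this gap moves with the free summation variable $a_{n-1}$, so the identity $e_r(A\sqcup B)=\sum_j e_j(A)\,e_{r-j}(B)$ for a fixed partition $A\sqcup B$ does not apply. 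The cleanest repair is to induct on the last \emph{step} of the path rather than on the last row: writing $Q_n(p,q)$ for the weighted path sum from $(p,1)$ to $(q,n)$, the last-step recursion $Q_n(p,q)=(x_n-b_{q+n-1})\,Q_n(p,q-1)+Q_{n-1}(p,q)$ matches exactly the pair of one-variable branchings $e_r(\dots,b_{q+n-1})=e_r(\dots)+b_{q+n-1}\,e_{r-1}(\dots)$ and $h_d(x_1,\dots,x_n)=x_n\,h_{d-1}(x_1,\dots,x_n)+h_d(x_1,\dots,x_{n-1})$, and the induction then closes with no further identities needed.
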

\begin{proof}
	The right-hand side of \eqref{eq:weighted_path_sum_as_P}
	can be rewritten as 
	\begin{align*}
			\sum_r (-1)^r \sum_{i_1<\cdots<i_r,j_1\leq j_2 \leq \cdots}b_{i_1}\cdots b_{i_r} x_{j_1} x_{j_2}\cdots
	\end{align*}
	The indices of the $b$'s define $r$ diagonal strips
	$D_\ell=\{ (u,v): i_k-1 \leq u+v \leq i_\ell\}$. 
	Set $D =
	\cup_\ell D_\ell$. We create a path $L$ by greedily
	picking the horizontal steps $j_1,j_2,\ldots$ from the
	vertical line at $(-s+1,1)$ to the right as follows. If
	$(-s+1,j_1)\to (-s+2,j_1) \not \in D$, then we add this
	step to $L$ with weight $x_{j_1}$ and continue. If it
	is in $D$, then we find the largest index $\ell<i$, such
	that $(-s+1,j_1)\to (-\ell+1,j_1) \in D$, but
	$(-\ell+1,j_1)\to(-\ell+2,j_1) \not \in D$, then we add
	$(-s+1,j_1)\to(-\ell+2,j_1)$ to $L$ with weight
	$b_{j_1-i+1}\cdots b_{j_1-\ell+1} x_{j_1}$ and note that
	we must have $i_1=j_1-i+1$. We then continue with $j_2$
	starting from $(-\ell+2,j_1)$ and build up $L$ via
	its horizontal steps. We see that the weight we picked up
	this way is obtained by selecting the corresponding terms
	from each of the brackets $(x_j - b_{i+j})$ along the
	path, picking up a weight $b$ if the horizontal step is in
	$D$ and a weight $x$ otherwise. 

	This is a bijection with the monomials in the left-hand side
	of \eqref{eq:weighted_path_sum_as_P}. Indeed, to see
	the inverse map, taking a path $L$ we select a term from
	each bracket. If we select the term $b_{\ell}$ at the
	$t$'th horizontal step (so $t$th bracket), we set $D_p = \{
	(u,v): \ell-1 \leq u+v \leq \ell\} $, where $p$ is the
	number of $b$ terms selected so far. We also set
	$i_p\coloneqq \ell$. The brackets where $x$'s were
	selected then produce the $j$ indices. 
\end{proof}

Applying \Cref{lem:lattice_paths} with $k=\mu_i-i$, $s=j$ and $n=m_j$, we obtain the following interpretation:

\begin{corollary}\label{cor:P_paths}
	With the notation 
	from \Cref{lem:lattice_paths} and
	\eqref{eq:general_jacoby_trudy},
	setting $b_i=0$ for $i\leq 0$, we have 
    \begin{equation}
			\label{eq:weighted_path}
			P_{i,j}^{\mu,\mathsf{m}}(x \mid b) = %\sum_{r=0}^{\mu_j-j+i} (-1)^r e_r(a_1,\ldots,a_{\mu_j+n-j}) h_{\mu_j+i-j-r}(x_1,\ldots,x_n) =
				\sum_{L\colon (-j+1,1) \to (\mu_i-i+1,m_i)}  w(L). 
		\end{equation}
\end{corollary}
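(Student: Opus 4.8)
The plan is to obtain this as a direct specialization of \Cref{lem:lattice_paths}, so essentially no new idea is needed beyond matching parameters and indices. Recall that \Cref{lem:lattice_paths} equates the generating function of up--right paths from $(-s+1,1)$ to $(k+1,n)$ with the right-hand side of \eqref{eq:weighted_path_sum_as_P}, a sum of products $e_r\,h_{k+s-r}$, while $P^{\mu,\mathsf{m}}_{i,j}(x\mid b)$ is \emph{defined} in \eqref{eq:general_jacoby_trudy} by a formally identical expression.

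First I would invoke \Cref{lem:lattice_paths} with the substitution $k=\mu_i-i$, $s=j$, $n=m_i$. Under this choice the path endpoints become $(-j+1,1)$ and $(\mu_i-i+1,m_i)$, agreeing with the sum in \eqref{eq:weighted_path}; the number of horizontal steps, hence the common homogeneous degree, is $k+s=\mu_i+j-i$, agreeing with both the summation range and the subscript of $h$ in \eqref{eq:general_jacoby_trudy}; and the $x$-arguments become $x_1,\ldots,x_{m_i}$. Next I would reconcile the two elementary-symmetric factors: the Lemma produces $e_r(b_{-j+2},\ldots,b_{\mu_i+m_i-i})$ whereas \eqref{eq:general_jacoby_trudy} has $e_r(b_1,\ldots,b_{\mu_i+m_i-i})$, and these coincide because of the Corollary's standing convention $b_\ell=0$ for $\ell\le 0$: the surplus arguments $b_{-j+2},\ldots,b_0$ vanish, and adjoining zero variables does not change $e_r$. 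Feeding this equality back into \Cref{lem:lattice_paths} turns its right-hand side into exactly the defining expression for $P^{\mu,\mathsf{m}}_{i,j}(x\mid b)$, which is \eqref{eq:weighted_path}.

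The only thing requiring care is the index bookkeeping: one must verify that the path weight $w(r,t)=x_t-b_{r+t}$, read along a path from column $-j+1$ to column $\mu_i-i+1$ through rows $1,\ldots,m_i$, produces $b$-subscripts running over exactly $\{-j+2,\ldots,\mu_i+m_i-i\}$ with no off-by-one slip, and that the row count on the right of \eqref{eq:weighted_path} is $m_i$. One should also note the degenerate case $\mu_i+j-i<0$: then both sides vanish (no admissible path on the left, an empty sum on the right), so no positivity hypothesis on $\mu_i-i$ is needed. Beyond this, there is nothing further to prove.
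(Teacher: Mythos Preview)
Your proposal is correct and follows exactly the paper's approach: the paper derives the corollary in one sentence by applying \Cref{lem:lattice_paths} with $k=\mu_i-i$, $s=j$, and $n=m_i$ (the paper's text has a typo ``$n=m_j$'', but the corollary statement and the definition of $P_{i,j}^{\mu,\mathsf{m}}$ make clear that $m_i$ is intended, as you correctly identified). Your careful treatment of the $b_\ell=0$ convention and the degenerate case $\mu_i+j-i<0$ is more explicit than the paper's, but the argument is the same.
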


\begin{theorem}\label{thm:F_ssyt}
    We have the following combinatorial formula
		for the functions $F_{\mu,\mathsf{m}}$ \eqref{eq:integral_general}:
		\begin{equation}
			\label{eq:combin_F_ssyt}
			F_{\mu,\mathsf{m}}(x \mid b) 
			=
			\sum_{T \in \ssp\SSYT(\mu;\mathsf{m})} 
			\prod_{v \in \mu} (x_{T(v)} -b_{T(v)+c(v)})
			,
		\end{equation}
		where $\SSYT(\mu;\mathsf{m})$ is the set of all flagged 
		semistandard Young tableaux
		(see
		\Cref{sub:partitions_young})
		of shape~$\mu$ and flag $\mathsf{m}=(m_1,m_2,\ldots)$. 
\end{theorem}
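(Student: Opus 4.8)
The plan is to read off \eqref{eq:combin_F_ssyt} from a Lindström--Gessel--Viennot determinant. Combining Proposition~\ref{prop:general_jacoby_trudy} with Corollary~\ref{cor:P_paths}, we have
\[
	F_{\mu,\mathsf{m}}(x\mid b)=\det\Bigl[\sum_{L\colon A_j\to B_i}w(L)\Bigr]_{i,j=1}^{n},
	\qquad A_j=(-j+1,1),\quad B_i=(\mu_i-i+1,m_i),
\]
where $w$ is the edge weighting of Lemma~\ref{lem:lattice_paths} (with $b_i=0$ for $i\le 0$), and the $(i,j)$ entry generates up--right lattice paths from source $A_j$ to sink $B_i$. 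So the object to aim for is the Gessel--Viennot expansion of this determinant.

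First I would check that the source/sink configuration is non-permutable, so that the Lindström--Gessel--Viennot lemma collapses the determinant to a signless sum over non-intersecting $n$-tuples of paths. The sources sit on the line $y=1$ with $A_1$ rightmost and $A_n$ leftmost, and the $x$-coordinates $\mu_i-i+1$ of the sinks are strictly decreasing because $\mu$ is a partition, so $B_1,\ldots,B_n$ are ordered left to right in the same way. If some non-intersecting family realized a permutation $\sigma\neq\mathrm{id}$, take $k<l$ with $\sigma(k)>\sigma(l)$: then the $l$-th path starts strictly to the left of the $k$-th path but must end strictly to the right of it, forcing two monotone lattice paths to share a vertex --- a contradiction. (We assume here the flag is weakly increasing, $m_1\le m_2\le\cdots$, so that the sinks are consistently ordered in height as well; this holds in all our applications, in particular $\mathsf{m}=(n,n,\ldots)$ in Theorem~\ref{thm:general}.)

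Next I would set up the standard bijection between non-intersecting families and flagged tableaux. In such a family the $i$-th path runs from $A_i$ to $B_i$, and hence has exactly $\mu_i$ horizontal steps and visits only heights in $\{1,\ldots,m_i\}$. Put $T(i,k)\coloneqq$ the height of the $k$-th horizontal step of the $i$-th path. Monotonicity of each path gives weakly increasing rows, the endpoint height gives the flag condition $T(i,k)\le m_i$, and disjointness of consecutive paths --- whose sources and sinks are shifted by one from row to row --- is exactly strict increase down columns; this is the Gessel--Viennot correspondence in its flagged form \cite{wachs1985flagged}, and it is a bijection onto $\SSYT(\mu;\mathsf{m})$. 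For the weights: the $k$-th horizontal step of the $i$-th path is $(-i+k,\,T(i,k))\to(-i+k+1,\,T(i,k))$, contributing $x_{T(i,k)}-b_{(-i+k)+T(i,k)}=x_{T(i,k)}-b_{T(i,k)+c(i,k)}$ since $c(i,k)=k-i$; taking the product over all horizontal steps of all paths turns $\prod_i w(\text{$i$-th path})$ into $\prod_{v\in\mu}(x_{T(v)}-b_{T(v)+c(v)})$, the summand in \eqref{eq:combin_F_ssyt}.

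The main obstacle is the non-permutability and sign check: one must make sure that the combined effect of the shifts $-j+1$ in the sources, the shape data $\mu_i-i+1$ in the sinks, and the flag heights $m_i$ really forces the identity permutation (and, for the clean statement in terms of flagged tableaux, that the flag is weakly increasing). Once that is in place, the rest is a routine unwinding of Lemma~\ref{lem:lattice_paths} and of the Lindström--Gessel--Viennot lemma, with the weight bookkeeping above matching the two sides box by box.
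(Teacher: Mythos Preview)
Your proposal is correct and follows essentially the same route as the paper: combine Proposition~\ref{prop:general_jacoby_trudy} with Corollary~\ref{cor:P_paths}, apply Lindstr\"om--Gessel--Viennot, and translate the resulting non-intersecting families into flagged SSYTs with the weight computation matching box by box. You are in fact more explicit than the paper on two points the paper leaves implicit---the non-permutability check and the need for the flag to be weakly increasing---so there is nothing to correct.
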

\begin{proof}
	The statement follows by combining \Cref{prop:general_jacoby_trudy}
	and \Cref{cor:P_paths}. Starting from
	\eqref{eq:general_jacoby_trudy_identity},
	let us rewrite this determinantal formula
	as a sum over lattice paths.
	This is possible thanks to the 
	Lindstr\"om-Gessel-Viennot
	lemma 
	\cite{lindstrom1973vector,gessel1985binomial}.
	The lattice paths corresponding to the determinant
	have weights $P_{i,j}^{\mu,\mathsf{m}}(x \mid b)$,
	start at $(-i+1,1)$, and end at $(\mu_j-j+1,m_j)$.
	These lattice paths are in a well-known bijection
	with SSYTs:
	the path starting at $(-i+1,1)$ is the $i$th row
	of the SSYT $T$, and the entries are the levels of the
	horizontal steps. The non-intersecting condition ensures
	that the columns are strictly increasing. 
	If $(i,j)$ is
	a cell in an SSYT $T$, then $T(i,j)$ is equal to the height of
	the path starting at $(-i+1,1)$ at its $j$'th step. Thus, 
	$T(i,j)=t$ if the step is $(-i+j,t)-(i+j+1,t)$, and the
	weight of this step is $w(-i+j,t)=x_t - b_{-i+j+t}$.
	See \Cref{fig:ssyt_paths} for an illustration.	
	Since $c(i,j)=j-i$, this
	weight matches the tableau weight in the right-hand side
	of \eqref{eq:combin_F_ssyt}.
	
	Note that
	nonintersecting conditions force the lattice path
	starting at $(-i+1,1)$ to initially take $i-1$ vertical
	steps and continue through $(-i+1,i)$, which ensures
	that all indices of $b$ appearing in \eqref{eq:combin_F_ssyt} are 
	at least $1$.  
\end{proof}

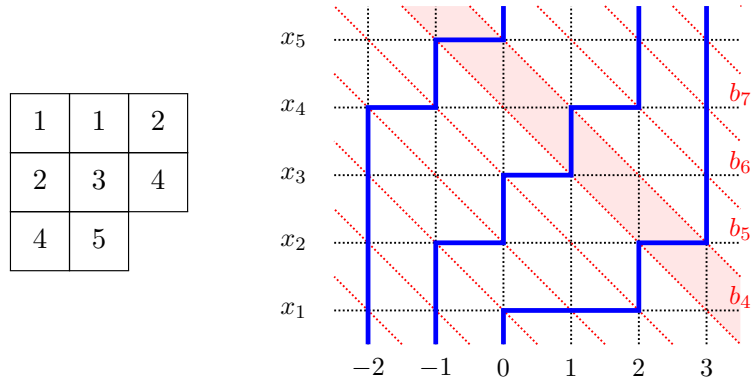
\begin{figure}[htpb]
	\centering
 \begin{subfigure}{0.2\textwidth}
 \ytableausetup{boxsize=2em}
 \raisebox{60pt}{\ytableaushort{112,234,45}}
%  \begin{tikzpicture}[scale=0.5]
% \lozh{2}{0}
% \lozh{4}{0}
% \lozh{1}{1.5}
% \lozh{3}{1.5}
% \lozh{5}{1.5}
% \lozh{2}{3}
% \lozh{4}{3}
% \lozh{3}{3.5}
% \loza{2}{1}
% \loza{4}{1}
% \loza{6}{1}
% \loza{1}{2.5}
% \loza{3}{2.5}
% \loza{5}{2.5}
% \lozb{2}{1}
% \lozb{7}{2.5}
% \end{tikzpicture}
  
%\vspace{0.2in}

\vspace{0.2in}

     % \ydiagram[*(cyan)]{2,1}*[*(cyan)]{1+0,3+1}*[*(cyan)]{1+0,1+0,2+1,1+1,3+1}*[*(cyan)]{1+0,1+0,1+0,4+1}*{6,6,5,5,4}
 \end{subfigure}
	\begin{subfigure}{0.5\textwidth}
	\scalebox{.9}{
	\begin{tikzpicture}
	[scale=1,thick]
  \fill[red!10,opacity=90] (6.5,0.5)--++(0,1)--++(-4,4)--++(-1,0)--++(5,-5);
	\foreach \jj in {1,2,3,4,5}{
	\draw[densely dotted] (.5-0,\jj)--++(6,0);
	\node at (-.1,\jj) {$x_\jj$};
	}
	\foreach \ii in {-2,...,3}{
	\draw[densely dotted] (\ii+3,.5)--++(0,5);
	\node at (\ii+3,.15) {$\ii$};
	%\node[rotate=90] at (\ii,5.3) {\small$y_{\ii}$};
	}
    \foreach \aa in {2,...,6}{
    \draw[densely dotted,red] (\aa-.5,0.5)--(1-.5,\aa-.5);
    }
    \foreach \aa in {4,...,9}{
    \draw[densely dotted,red] (\aa+3-5.5,5.5)--(6.5,\aa+3-6.5);
    }
    \foreach \aa in {4,...,7}{ 
    \node[red] at (6.5,\aa-3+.2) {$b_\aa$};
    }

	\draw[line width=2, blue] (3,.5)--++(0,.5)--++(2,0)--++(0,1)--++(1,0)--++(0,3.5);
	\draw[line width=2, blue] (2,.5)--++(0,1.5)--++(1,0)--++(0,1)--++(1,0)--++(0,1)--++(1,0)--++(0,1.5);
	\draw[line width=2, blue] (1,.5)--++(0,3.5)--++(1,0)--++(0,1)--++(1,0)--++(0,0.5);

     % \draw[line width=2, green] (3+0.1,1-0.1)--++(4,0)--++(0,1)--++(1,0)--++(0,2)--++(1,0)--++(0,2);

     % \draw[line width =2, red] (4,0)--++(6,0);

     % \foreach \xx in {1,...,4}{
     % \node[green] at (\xx+2.5,0.6) {$y_\xx$};
     % }
     % \foreach \ii in {1,...,6}{
     %    \node[red] at (3+\ii+0.4,0-0.3) {$y_\ii$}; 
     % }
     % \node [green] at (7.8,1.6) {$y_5$};
     % \node[green] at (7.4,1.2) {$x_5$};
     %  \node[green] at (8.4,2.2) {$x_4$};
     % \node[green] at (8.4,3.2) {$x_3$};
     % \node[green] at (8.8,3.6) {$y_6$};
     % \node[green] at (9.4,4.2) {$x_2$};
     % \node[green] at (9.4,5.2) {$x_1$};

	\end{tikzpicture}
	}
	\end{subfigure}
	\caption{A semistandard tableaux of total weight $wt(T) = (x_1-b_1)(x_1-b_2)(x_2-b_4)(x_2-b_1)(x_3-b_3)(x_4-b_5)(x_4-b_2)(x_5-b_4)$ and its corresponding non-intersecting lattice path configuration.  %its lozenge tiling and excited diagram for $\la=(6,6,5,5,4)$ and the corresponding lattice paths.
 % The shaded strip covers the horizontal steps of weights $x_2-b_4$ and $x_5-b_4$.
 %The green line is the path $L$ associated to $\la$ with weights $a^\la=(y_1,y_2,y_3,y_4,x_5,y_5,x_4,x_3,y_6,x_2,x_1)$. The red line indicates the reduced weights in the proof of Proposition~\ref{prop:excited} which give rise to the excited diagrams.
 }
	\label{fig:ssyt_paths}
\end{figure}

As a hint to our final step, and for completeness,
let us obtain as a corollary 
the
combinatorial formula for factorial Schur polynomials~\eqref{eq:combin}:
\begin{corollary}\label{cor:factorial_combin}
	For $\mathsf{m}=(n,n,n,\ldots)$ and $\mu$ a partition,
	formula \eqref{eq:combin_F_ssyt}
	reduces to the one for the factorial Schur polynomials
	\eqref{eq:combin}:
	$$s_\mu(x \mid a) = F_{\mu,\mathsf{m}}(x \mid a) = \sum_{T \in \ssp\SSYT(\mu)} \prod_{v \in \mu} (x_{T(v)} -a_{T(v)+c(v)}).$$
\end{corollary}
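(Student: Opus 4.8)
The plan is to obtain Corollary~\ref{cor:factorial_combin} as an immediate specialization of \Cref{thm:F_ssyt} together with the identification of the integral $F_{\mu,\mathsf{m}}$ with the factorial Schur polynomial in the non-flagged case. First I would set $\mathsf{m}=(n,n,n,\ldots)$ and $b=a$ in the definition \eqref{eq:integral_general} and observe that, since $\mu_i+m_i-i=\mu_i+n-i$ and the product in the denominator runs over $j=1,\ldots,m_i=n$, this is exactly the original integral $F_\mu(x\mid a)$ from \eqref{eq:integral_def}--\eqref{eq:f_factorial_schur} (this is precisely the recovery statement made right after \eqref{eq:integral_general}). By \Cref{thm:integral}, $F_\mu(x\mid a)=s_\mu(x_1,\ldots,x_n\mid a)$, so $F_{\mu,\mathsf{m}}(x\mid a)=s_\mu(x\mid a)$.

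Next I would specialize the combinatorial formula \eqref{eq:combin_F_ssyt} from \Cref{thm:F_ssyt} to this choice of $\mathsf{m}$. The only point that needs a word of comment is that with flag $\mathsf{m}=(n,n,n,\ldots)$ the set $\SSYT(\mu;\mathsf{m})$ of flagged semistandard tableaux of shape $\mu$ with all entries bounded by $f_i=n$ coincides with the set $\SSYT(\mu)$ of ordinary semistandard tableaux of shape $\mu$ filled with entries in $\{1,\ldots,n\}$, because the flag condition $T(i,j)\le m_i=n$ is now vacuous beyond the standing convention that all entries are $\le n$. Substituting $b=a$ turns the weight $\prod_{v\in\mu}(x_{T(v)}-b_{T(v)+c(v)})$ in \eqref{eq:combin_F_ssyt} into $\prod_{v\in\mu}(x_{T(v)}-a_{T(v)+c(v)})$, which is exactly the right-hand side of \eqref{eq:combin}. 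Chaining the two equalities gives the claimed string $s_\mu(x\mid a)=F_{\mu,\mathsf{m}}(x\mid a)=\sum_{T\in\SSYT(\mu)}\prod_{v\in\mu}(x_{T(v)}-a_{T(v)+c(v)})$.

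There is essentially no obstacle here: the content has already been done in \Cref{thm:F_ssyt} and \Cref{thm:integral}, and this corollary is just the bookkeeping of specializing $\mathsf{m}$ and $b$ and noting that the flag becomes trivial. If one wanted to avoid invoking \Cref{thm:integral} (as the paper emphasizes it is not logically needed), one could instead cite \eqref{eq:combin} as the known combinatorial formula for $s_\mu(x\mid a)$ and read the corollary purely as the statement that the $\mathsf{m}=(n,n,\ldots)$ case of \eqref{eq:combin_F_ssyt} reproduces it; but since \Cref{thm:integral} has been proved in the excerpt, the cleanest route is the three-term chain above. The mildest care point — and the one I would state explicitly in the write-up — is the identification $\SSYT(\mu;(n,n,\ldots))=\SSYT(\mu)$ under the convention that tableau entries are bounded by the number of variables $n$.
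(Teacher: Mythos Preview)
Your proposal is correct and matches the paper's own proof essentially step for step: the paper also obtains the right-hand side by applying \Cref{thm:F_ssyt} with $m_i=n$ and noting the flag condition becomes trivial, and obtains the left-hand side by invoking \Cref{thm:integral} after observing that the integral \eqref{eq:integral_general} with $m_i=n$ reduces to \eqref{eq:integral_def}.
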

\begin{proof}
    \Cref{thm:F_ssyt} applied to parameters $a$ with $m_i=n$ gives the desired right side, as the flag condition becomes trivial and we are summing over all SSYT of shape $\mu$. 
		To see the left side, we invoke \Cref{thm:integral} with $m_i=n$, and observe that integral matches
		\eqref{eq:integral_general}. 
\end{proof}

\subsection{Partition functions of excited diagrams}
\label{ss:excited_formula}

Here we specialize the parameters $a$
in the integral $F_\mu(x\mid a)$
\eqref{eq:integral_def} into a 
sequence containing $x$'s and $y$'s,
where the order of the variables is determined by another Young diagram
$\la$. Namely, let the boundary of $\lambda$ 
be a 
lattice path $L$ from $(0,1)$ to $(\la_1,n)$, encoded as a sequence of U(p) and H(orizontal) steps, so $L_{\la_i +n-i+1}=U$ are the vertical (up) steps for $i=1,\ldots,n$. 
We write a variable $y_j$ for a horizontal step at column $j$ and a variable $x_i$ for the vertical step at height $i$. 
See \Cref{fig:a_lambda} for an illustration.
In detail, 
reading along $L$, we record a sequence of
$x$'s and $y$'s as the entries for $a^\la$:
\begin{equation}
	\label{eq:a_lambda_def}
	a^\la_{\la_i + n-i+1}\coloneqq x_{i},
	\quad 
	\textnormal{and}
	\quad 
	a^\la_r=y_{j_r} \text{ for } \la_{i+1}+n-i+1 \leq r \leq
	\la_i+n-i, \text{ setting }j_r\coloneqq  r-i.
\end{equation}

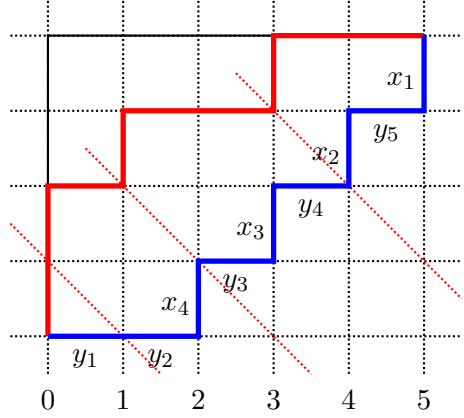
\begin{figure}
    \centering
\begin{tikzpicture}
    [scale=1,thick]
  %\fill[red!10,opacity=90] (6.5,0.5)--++(0,1)--++(-4,4)--++(-1,0)--++(5,-5);
  \draw[thick] (1,1)--(1,5)--(6,5);
  
	\foreach \jj in {1,2,3,4,5}{
	\draw[densely dotted] (.5-0,\jj)--++(6,0);
	%\node at (-.1,5-\jj) {$x_\jj$};
	}
	\foreach \ii in {0,...,5}{
	\draw[densely dotted] (\ii+1,.5)--++(0,5);
	\node at (\ii+1,.15) {$\ii$};
	%\node[rotate=90] at (\ii,5.3) {\small$y_{\ii}$};
	}
   % \foreach \aa in {2,...,6}{
    \draw[densely dotted,red] (1-.5,2.5)--++(2,-2);
        \draw[densely dotted,red] (2-.5,3.5)--++(3,-3);
        \draw[densely dotted,red] (4-.5,4.5)--++(3,-3);

	%\draw[line width=2, red] (3,.5)--++(0,.5)--++(2,0)--++(0,1)--++(1,0)--++(0,3.5);
 \draw[line width=2, red] (1,1)--++(0,2)--++(1,0)--++(0,1)--++(2,0)--++(0,1)--++(2,0);
	\draw[line width=2, blue] (1,1)--++(2,0)--++(0,1)--++(1,0)--++(0,1)--++(1,0)--++(0,1)--++(1,0)--++(0,1);

 \node at (1.5,0.7) {$y_1$};
  \node at (2.5,0.7) {$y_2$};
 \node at (3.5,1.7) {$y_3$};
 \node at (4.5,2.7) {$y_4$};
 \node at (5.5,3.7) {$y_5$};
 \node at (2.7,1.4) {$x_4$};
 \node at (3.7,2.4) {$x_3$};
 \node at (4.7,3.4) {$x_2$};
 \node at (5.7,4.4) {$x_1$};

\end{tikzpicture}    \caption{The parameter sequence $a^{\la}$. Here $\la=(5,4,3,2)$ is the blue path, with labels on the steps giving $a^\la=(y_1,y_2,x_4,y_3,x_3,y_4,x_2,y_5,x_1)$. The shape $\mu=(3,1)$ is drawn in red, and the diagonal lines that start from the end of the rows of $\mu$ are shown meeting $\la$ in rows $m_1=2$, $m_2=3$, $m_3=4$.}
    \label{fig:a_lambda}
\end{figure}

% An SSYT $T$ of shape $\mu$ gives rise to an excited diagram $D$, where the box $u$ at $(i,j)$ is moved to position $(T(u),T(u)+j-i)$ along its diagonal. This also gives rise to a lozenge tiling, which pictorially looks like a folded excited diagram. More precisely corresponds to an  reverse plane partition with values $T(i,j)-i+1$ in box $(i,j)$, and the height of the horizontal lozenge above $(i,j)$ in the support $\mu$  is given $d - T(i,j)+i-1$, where $d$ is the allowed maximum. With the weights $a^\la$, the proof of Lemma~\ref{lemma:vanishing} gives that the SSYTs $T$ appearing in $s_\mu(x \mid a^\la)$ are the ones whose lattice paths $\gamma_i$ do not cross over the $\la$ paths $L_i$. 

% However, there is one more restriction on the possible $T$s, and this comes from the symmetry of $s_\mu( x \mid a)$, namely that $s_\mu(x_1,\ldots,x_n \mid a) = s_\mu (x_n,\ldots,x_1 \mid a)$ for all parameters $a$, which is evident from~\eqref{eq:weyl}. In the lattice path model this corresponds to relabeling the horizontal weights, so that on line $i$ we have $x_{n+1-i}$. Then the lattice paths which appear with nonzero weights would be the ones where there are no horizontal steps on line $i$, column $j$ with weight $a_{i+j}=x_{n+1-i}$

We now consider the combinatorial interpretation of $F_\mu(x \mid a^\la)$ as excited diagrams.
\begin{proposition}\label{prop:excited}
    Let $\mathcal{E}(\la/\mu)$ be the set of excited diagrams of $\mu$ inside $\la$. Then
    $$F_\mu(x \mid a^\la)= \sum_{D \in \mathcal{E}(\la/\mu)} \prod_{(i,j) \in \la\setminus D} (x_i-y_j).$$
\end{proposition}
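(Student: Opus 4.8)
The plan is to identify $F_\mu(x\mid a^\lambda)$ with the flagged-SSYT sum from \Cref{thm:F_ssyt} under the right specialization, and then match flagged SSYTs with excited diagrams using the bijection recalled in \Cref{ss:excited}. Concretely, I would first recall that $F_\mu(x\mid a^\lambda) = F_{\mu,\mathsf{m}}(x\mid b)$ for a suitable choice of $b$ and $\mathsf{m}$: the parameter sequence $b$ should be exactly $a^\lambda$ with its entries reindexed so that the $b$-variables sitting on the diagonal through the end of row $i$ of $\mu$ are precisely the factors $(x_{T(i,j)}-a^\lambda_{T(i,j)+c(i,j)})$ that can occur, and the flag $\mathsf{m}=(m_1,m_2,\ldots)$ should be $m_i = \max\{r : \lambda_r - r \ge \mu_i - i\}$, i.e. the row in which the diagonal emanating from the end of row $i$ of $\mu$ meets the boundary of $\lambda$ (the rows labeled $m_1,m_2,m_3$ in \Cref{fig:a_lambda}). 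The point is that the integrand in \eqref{eq:integral_def} with parameters $a^\lambda$ has denominators $\prod_{j=1}^n(u_i-x_j)$, but the numerator $f_{\mu_i+n-i}(u_i\mid a^\lambda) = \prod_{k=1}^{\mu_i+n-i}(u_i - a^\lambda_k)$ already contains the factors $(u_i - x_r)$ for all $r$ with $\lambda_r + n - r + 1 \le \mu_i + n - i$, equivalently $r > m_i$; these cancel against the corresponding denominator factors, leaving exactly the shape \eqref{eq:integral_general} with $b$ the subsequence of $a^\lambda$ of $y$'s together with the surviving $x$'s, and $m_i$ as above.

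After this reduction, \Cref{thm:F_ssyt} gives
\begin{equation*}
	F_\mu(x\mid a^\lambda) = \sum_{T\in\SSYT(\mu;\mathsf{f})} \prod_{v\in\mu}\bigl(x_{T(v)} - y\text{-or-}x\text{-index of }a^\lambda_{T(v)+c(v)}\bigr),
\end{equation*}
where $\mathsf{f}=(m_1,m_2,\ldots)$ is precisely the flag $f_i = \max\{j : \lambda_j - j \ge \mu_i - i\}$ appearing in \Cref{ss:excited}. The next step is bookkeeping: for a flagged SSYT $T$ with $T(i,j)=r$, the entry $a^\lambda_{r+c(i,j)} = a^\lambda_{r+j-i}$ must be identified. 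The flag bound $r\le m_i$ combined with the semistandard/flag constraints forces $r + j - i$ to land on a horizontal ($y$) step of the boundary path $L$ of $\lambda$ rather than a vertical ($x$) step — this is exactly the geometric content of the diagonal lines in \Cref{fig:a_lambda} — so $a^\lambda_{r+j-i} = y_{(r+j-i)-r} = y_{j-i+?}$, and tracking indices via \eqref{eq:a_lambda_def} yields $a^\lambda_{r+j-i} = y_{c}$ where $c$ is the column of the box of the excited diagram. Thus each factor becomes $(x_{T(i,j)} - y_{\text{column of that box in }D})$.

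Finally I would invoke the bijection $\mathcal{E}(\lambda/\mu)\leftrightarrow\SSYT(\mu;\mathsf{f})$ recalled in \Cref{ss:excited}: given $D\in\mathcal{E}(\lambda/\mu)$, define $T$ by $T(i,j) = $ (row index of the image in $D$ of the box $(i,j)\in\mu$). Under this bijection, the box of $\mu$ originally at $(i,j)$ sits in row $r=T(i,j)$ of $D$; since excited moves preserve the content offset appropriately, its column in $D$ is $r + (j-i)$... more precisely one checks that the box $u\in\lambda\setminus D$ whose weight $(x_i - y_j)$ appears on the right-hand side of the Proposition corresponds to a step of the lattice-path/SSYT data, and the product $\prod_{u\in\lambda\setminus D}(x_i-y_j)$ matches $\prod_{v\in\mu}(x_{T(v)} - y_{\cdots})$ term by term. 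Hence the flagged-SSYT sum equals $\sum_{D\in\mathcal{E}(\lambda/\mu)}\prod_{(i,j)\in\lambda\setminus D}(x_i - y_j)$, completing the proof. The main obstacle I anticipate is the index-chasing in the middle step: verifying cleanly that the flag condition $m_i = \max\{r:\lambda_r - r\ge\mu_i-i\}$ is exactly what makes every evaluated parameter $a^\lambda_{T(i,j)+c(i,j)}$ be a $y$-variable (never an $x$-variable) with the correct subscript, and that the resulting column index agrees with the column of the corresponding box of $\lambda\setminus D$ under the excited-diagram bijection. This is where the boundary path $L$ of $\lambda$, the diagonals, and the definition \eqref{eq:a_lambda_def} all have to be reconciled, and it is worth doing carefully (perhaps with the aid of \Cref{fig:a_lambda} and \Cref{fig:excited_diagrams}) rather than waved through.
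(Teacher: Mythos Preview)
Your approach is exactly the paper's: cancel the factors $(u_i-x_r)$ for $r>m_i$ in the integrand to obtain $F_\mu(x\mid a^\lambda)=F_{\mu,\mathsf{m}}(x\mid y)$, apply \Cref{thm:F_ssyt}, and invoke the flagged-SSYT/excited-diagram bijection of \Cref{ss:excited} to match weights. One clarification: after the cancellation the parameter sequence $b$ is \emph{purely} $y$ --- every $x_r$ with $r>m_i$ that appears among $a^\lambda_1,\ldots,a^\lambda_{\mu_i+n-i}$ cancels against the denominator, and no $x_r$ with $r\le m_i$ appears there at all --- so the extra verification you anticipate (that each $a^\lambda_{T(v)+c(v)}$ lands on a $y$-step) is automatic, and the weight $x_{T(v)}-y_{T(v)+c(v)}$ is directly $x_t-y_{t+j-i}$ for the excited box sitting at $(t,t+j-i)\in D$.
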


\begin{proof}
	Substituting $a^\lambda$ into 
	the initial integral formula
	\eqref{eq:integral_def}, we obtain
	many cancellations.
	Denote 
	$m_i \coloneqq \min \{ \ell : \la_\ell < \mu_i+\ell-i\}-1$,
	so $m_i$ is the row index where the diagonal from
	$(i,\mu_i)$ meets the outer boundary
	of the Young diagram $\la$ (see
	\Cref{fig:a_lambda}). 
	We then observe that
	\begin{equation*}
		(a^\la_1,a^\la_2,\ldots, a^\la_{\mu_i+n-i}) 
		=
		(y_1,\ldots,x_n,\ldots,x_{m_i+1},\ldots, y_{\mu_i+m_i-i}),
	\end{equation*}
	that is, the last $x$ variable appearing is $x_{m_i+1}$. 
	We can cancel some of the terms in the integrand of formula \eqref{eq:integral_def} of $F_\mu$ as
 \begin{align*}
     \frac{
			 (u_i-y_1)\cdots(u_i-x_n)\cdots (u_i-x_{m_i+1}) \cdots
			 (u_i -y_{\mu_i+m_i-i})}{ (u_i-x_n)\cdots(u_i-x_1)} 
    = \frac{ (u_i-y_1)\cdots  (u_i
				 -y_{\mu_i+m_i-i})}{ (u_i-x_{m_i})\cdots(u_i-x_1)}.
 \end{align*}
	Then the integral formula becomes
	\begin{equation*}
		%\begin{split}
			F_\mu(x \mid a^\la) =
%			&\hspace{5pt}= 
%			 \frac{ (-1)^{\binom{n}{2}} }{(2\pi\sqrt{-1})^n}
%		 \oint_\gamma\ldots\oint_\gamma  \prod_i \frac{
%			 (u_i-y_1)\cdots(u_i-x_n)\cdots (u_i-x_{m_i+1}) \cdots
%			 (u_i -y_{\mu_i+m_i-i})}{ (u_i-x_n)\cdots(u_i-x_1)}
%			 \\&\hspace{340pt}\times
%			 \Delta(u)\ssp du_1 \ldots d u_n
			% &\hspace{5pt}=
				 \frac{ (-1)^{\binom{n}{2}} }{(2\pi\sqrt{-1})^n}
				 \oint_\gamma\ldots\oint_\gamma 
				 \prod_i \frac{ (u_i-y_1)\cdots  (u_i
				 -y_{\mu_i+m_i-i})}{ (u_i-x_{m_i})\cdots(u_i-x_1)}\ssp
				 \Delta(u)\ssp du_1 \ldots d u_n 
				% \\&\hspace{5pt}
				 = F_{\mu,\mathsf{m}}(x \mid y),
		%\end{split}
	%    \frac{ 1 }{(2\pi \sqrt{-1})^n} \oint \frac{\Delta(u_n,\ldots,u_1)}{\prod_i \prod_j (1-x_j u_i) } \prod_i \frac{(1-a_1u_i) \cdots (1-a_{\mu_i+n-i} u_i) }{u_i^{\mu_i+n-i-1}} d(u_1^{-1})\cdots d(u_n^{-1}) \\
		% = \frac{ 1 }{(2\pi \sqrt{-1})^n} \oint \frac{\Delta(u_n,\ldots,u_1)}{\prod_i \prod_{j=1}^{m_i-1} (1-x_j u_i) } \prod_i \frac{(1-y_1u_i) \cdots (1-y_{\mu_i+m_i-i-1} u_i) }{u_i^{\mu_i+n-i-1}} d(u_1^{-1})\cdots d(u_n^{-1})\\
		% = \sum_{\sigma} sgn(\sigma) \prod_{i=1}^n Res_{u_i=0} \frac{1}{u_i^{\mu_i-i+1 +\sigma_i}} \frac{(1-y_1 u_i)\cdots (1-y_{\mu_i+m_i-i-1} u_i)}{(1-x_1u_i)\cdots(1-x_{m_i-1} u_i)} = \det [P^\la(i,j)]_{i,j=1}^n,
	\end{equation*}
	where $F_{\mu,\mathsf{m}}(x \mid y)$ is the generalized integral 
	\eqref{eq:integral_general},
	with parameters $b$ replaced by $y$.

	We can now apply \Cref{thm:F_ssyt} to interpret $F_{\mu,\mathsf{m}}$
	as a sum over flagged SSYT. Our final step is to identify
	these flagged SSYT with excited diagrams with the
	corresponding weight. The map from an excited
	diagram $D \in \mathcal{E}(\la/\mu)$ to a flagged SSYT $T$ of
	shape $\mu$ and flag $f_i = \max\{ j: \la_j-j \geq
	\mu_i-i\}$ was discussed in \Cref{ss:excited}. We observe
	that $f_i=m_i$, so we have the same set of SSYTs. Finally,
	if for a box $v=(i,j) \in \mu$ we have $T(i,j)=t$, then
	$T(i,j)+c(i,j)=t+j-i$ is the column index of the
	corresponding excited box and $x_{T(v)}-y_{T(v)+c(v)}=x_t -
	y_{t+j-i}$ and $(t,t+j-i) \in \la \setminus D$ is the
	corresponding box. This completes the proof. 
\end{proof}

The interpolation
property 
of the factorial
Schur polynomials $s_{\mu}(x\mid a)=F_\mu(x\mid a)$
(see \Cref{thm:integral}) can be derived 
directly from  \Cref{prop:excited}.
This property is originally due to
\cite{okounkov_newton_int}
(see also 
\cite{Okounkov1996quantumImm,
OkounkovOlshanski1996ShiftSchur}),
and can be
alternatively shown using the
double alternant formula~\eqref{eq:weyl}.
\begin{corollary}
    	\label{cor:vanishing}
			Let $\la$ be a partition and $a^\la$ be defined in \eqref{eq:a_lambda_def}.
			Then
   $$ F_\mu(x \mid a^\la) =0 \text{ if }\mu \not \subseteq \la, \quad \text{ and } \quad F_\la(x \mid a^\la)=\prod_{(i,j) \in \la}(x_i-y_j).$$
\end{corollary}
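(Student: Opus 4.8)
The plan is to read off both parts of the corollary from Proposition~\ref{prop:excited}, which identifies $F_\mu(x\mid a^\la)$ with the weighted sum over excited diagrams $\sum_{D\in\mathcal E(\la/\mu)}\prod_{(i,j)\in\la\setminus D}(x_i-y_j)$. The only inputs needed beyond that proposition are elementary facts about the set $\mathcal E(\la/\mu)$: that it is empty precisely when $\mu\not\subseteq\la$, and that it has a unique element when $\mu=\la$.

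For the vanishing statement, first I would recall that excited diagrams are obtained from $D=\mu$ by a sequence of excited moves, each of which keeps the diagram inside $\la$ and never decreases the number of boxes; in particular every $D\in\mathcal E(\la/\mu)$ satisfies $D\subseteq\la$ and $|D|=|\mu|$. If $\mu\not\subseteq\la$, then already $D=\mu$ fails to fit inside $\la$, and no excited move can help, so $\mathcal E(\la/\mu)=\varnothing$. Hence the sum in Proposition~\ref{prop:excited} is empty and $F_\mu(x\mid a^\la)=0$. (Alternatively, one can argue directly from the integral: when $\mu_i>\la_i$ for some $i$, the corresponding pole structure forces the integrand to have fewer than $n$ poles inside $\gamma$ in a way that makes the integral vanish — but invoking $\mathcal E(\la/\mu)=\varnothing$ via the already-proved Proposition~\ref{prop:excited} is cleaner.)

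For the evaluation at $\mu=\la$, I would observe that $\mathcal E(\la/\la)$ consists of the single diagram $D=\la$ itself: since $|D|=|\la|$ and $D\subseteq\la$, the only possibility is $D=\la$, and no excited move is available because every box already has all of its relevant neighbors (below, right, and down-diagonal) either in $D$ or outside $\la$. Therefore the sum in Proposition~\ref{prop:excited} collapses to the single term indexed by $D=\la$, and $\la\setminus D=\varnothing$, giving
\begin{equation*}
	F_\la(x\mid a^\la)=\prod_{(i,j)\in\la\setminus\la}(x_i-y_j)=\prod_{(i,j)\in\varnothing}(x_i-y_j).
\end{equation*}
Wait — that would give the empty product $1$, not $\prod_{(i,j)\in\la}(x_i-y_j)$, so I need to be careful: the claimed formula has the product over \emph{all} of $\la$, which means the relevant object is not $\mathcal E(\la/\la)$ but rather $\mathcal E(\la/\varnothing)=\{\varnothing\}$ shifted, or more precisely one should track conventions. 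The correct reading is that Proposition~\ref{prop:excited} with $\mu=\la$ has $\la\setminus D$ empty, hence $F_\la=1$ up to normalization, and the stated identity $F_\la(x\mid a^\la)=\prod_{(i,j)\in\la}(x_i-y_j)$ must instead come from the combinatorial formula for factorial Schur polynomials or a direct residue computation; so the honest approach is to plug the specialization $a=a^\la$ into the combinatorial formula $s_\la(x\mid a)=\sum_{T\in\SSYT(\la)}\prod_{v\in\la}(x_{T(v)}-a_{T(v)+c(v)})$ (Corollary~\ref{cor:factorial_combin}, via Theorem~\ref{thm:integral}) and note that the flag constraint $T(i,j)\le m_i$ with $\mu=\la$ forces $T(i,j)=i$ for every box, leaving exactly one tableau, whose weight is $\prod_{(i,j)\in\la}(x_i-a^\la_{i+j-i})=\prod_{(i,j)\in\la}(x_i-a^\la_{i+c(i,j)})$; then one checks from \eqref{eq:a_lambda_def} that $a^\la_{i+(j-i)}$ indexed along the $i$-th row reads off exactly the horizontal-step labels $y_j$, yielding $\prod_{(i,j)\in\la}(x_i-y_j)$.

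The main obstacle is precisely this bookkeeping: matching the index $a^\la_{\,T(v)+c(v)}$ against the two-part definition \eqref{eq:a_lambda_def} of $a^\la$ and verifying that, for the unique surviving tableau, the arguments land on the $y$-variables with the right subscripts. Once the unique-tableau reduction is in place (immediate from the flag $\mathsf f=\mathsf m$ being tight when $\mu=\la$, since each row can only be filled by its own row index), the rest is a direct substitution. I would carry this out in the order: (1) cite Proposition~\ref{prop:excited} or Theorem~\ref{thm:integral}; (2) dispose of $\mu\not\subseteq\la$ via emptiness of $\mathcal E(\la/\mu)$; (3) for $\mu=\la$, identify the unique excited diagram / unique flagged SSYT; (4) evaluate its weight and simplify the $a^\la$-indices to obtain $\prod_{(i,j)\in\la}(x_i-y_j)$.
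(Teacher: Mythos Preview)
Your vanishing argument ($\mathcal E(\la/\mu)=\varnothing$ when $\mu\not\subseteq\la$, hence the sum is empty) is correct and is exactly what the paper does.

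For $\mu=\la$ you in fact caught a typo in the paper: the product in Proposition~\ref{prop:excited} should be over $D$, not over $\la\setminus D$ (compare the definition $\mathsf Z_\mu(\la)=\sum_D\prod_{(i,j)\in D}(x_i-y_j)$ in \eqref{eq:def_excited_sum}, and observe that in the last line of the proof of Proposition~\ref{prop:excited} the weight $x_t-y_{t+j-i}$ is attached to the \emph{excited} box, which lies in $D$). With the corrected statement your first instinct works immediately: $\mathcal E(\la/\la)=\{\la\}$, and the single term has weight $\prod_{(i,j)\in\la}(x_i-y_j)$. The paper's own proof says the same thing in the flagged-tableau language: for $\mu=\la$ the flag is $m_i=i$, so the only flagged SSYT is $T(i,j)=i$, with factor $x_i-y_j$ at each box.

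Your detour through Corollary~\ref{cor:factorial_combin}, however, has a real gap. That formula has \emph{no} flag constraint (the sum is over all SSYT of shape $\la$ with entries $\le n$) and its parameters are $a^\la$, not $y$. For the tableau $T(i,j)=i$ the factor at $(i,j)$ is $x_i-a^\la_{i+c(i,j)}=x_i-a^\la_j$, and $a^\la_j\ne y_j$ in general (e.g.\ for $\la=(2,1)$, $n=2$, one has $a^\la_2=x_2$). Worse, there is more than one tableau: already for $\la=(2,1)$ two SSYT contribute, and the desired product only appears after a nontrivial cancellation. So you cannot simply ``impose the flag'' on Corollary~\ref{cor:factorial_combin}. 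The correct tool is the flagged formula with parameters $y$, obtained inside the proof of Proposition~\ref{prop:excited} via the reduction $F_\la(x\mid a^\la)=F_{\la,\mathsf m}(x\mid y)$ and Theorem~\ref{thm:F_ssyt}; there the flag $m_i=i$ genuinely forces $T(i,j)=i$, and the factor is literally $x_i-y_{i+c(i,j)}=x_i-y_j$ with no further bookkeeping needed.
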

\begin{proof}
	For $\mu\not \subseteq \la$, there are 
	no allowable flagged SSYT/excited diagrams, and thus $F_\mu(x \mid a^\la)=0$.
	For $\lambda=\mu$, the only possible flagged tableau
	is the one with $T(i,j)=i$, whose weight for the box $(i,j)$
	is $x_i-y_j$. This completes the proof.
\end{proof}

\subsection{Proof of the generalized hook-length formula and \Cref{thm:general}}
\label{sub:proof_general}

The generalized 
(multivariate) skew
hook-length formulas~\eqref{eq:general},
\eqref{eq:general2} follow from evaluating 
$F_\mu(x \mid a^\la)$ in two different ways. 
One is
recursively by the Pieri rule adding boxes to $\mu$ until it
reaches $\la$, and the other is the combinatorial interpretation
for $s_\mu$ given in \Cref{prop:excited}. 
This approach closely follows the general formalism of
\Cref{sub:general_formalism_general}.
First, let us establish the equivalence of the two formulas:
\begin{proposition}
	\label{prop:equivalence}
	Formulas~\eqref{eq:general} and~\eqref{eq:general2} are equivalent. 
\end{proposition}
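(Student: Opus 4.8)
The plan is to exhibit an explicit invertible substitution of variables that carries one identity into the other, exactly as promised in the footnote after Theorem~\ref{thm:general}. Concretely, given formal variables $t_1,t_2,\ldots$, set $z_{-n+i} \coloneqq t_{i+1}-t_i$ for all $i\ge 1$ (equivalently $z_j = t_{j+n+1}-t_{j+n}$), where $n=\ell(\lambda)$. This is invertible up to an overall additive constant in the $t$'s, which is harmless since both sides of \eqref{eq:general} and \eqref{eq:general2} depend only on differences of the $t$'s (on the right of \eqref{eq:general2} through $t_{\lambda_i+n-i+1}-t_{j+n-\lambda'_j}$, and on the left through $t(\nu)=\sum_i t_{\lambda_i+n-i+1}-t_{\nu_i+n-i+1}$). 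So it suffices to check that, under this substitution, the left-hand sides match and the right-hand sides match.

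First I would handle the right-hand sides. A box $(i,j)\in\lambda$ contributes $1/z(H(u))$ in \eqref{eq:general}, where $H(u)$ is the hook of $u=(i,j)$ in $\lambda$ and $z(H(u))=\sum_{v\in H(u)} z_{c(v)}$ with $c(v)$ the content. The hook $H(i,j)$ consists of the arm cells $(i,j),(i,j+1),\ldots,(i,\lambda_i)$ and the leg cells $(i+1,j),\ldots,(\lambda'_j,j)$; their contents run over the consecutive integers from $j-\lambda'_j$ up to $\lambda_i-i$. Hence $z(H(i,j)) = \sum_{c=j-\lambda'_j}^{\lambda_i-i} z_c$, which telescopes: writing $z_c = t_{c+n+1}-t_{c+n}$ gives $z(H(i,j)) = t_{\lambda_i-i+n+1} - t_{j-\lambda'_j+n} = t_{\lambda_i+n-i+1} - t_{j+n-\lambda'_j}$. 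This is exactly the denominator appearing in \eqref{eq:general2}. Since the excited-diagram sets $\mathcal{E}(\lambda/\mu)$ and the box sets $\lambda\setminus D$ are the same in both formulas, the right-hand sides agree term by term.

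Next I would handle the left-hand sides. For $T\in\SYT(\lambda/\mu)$, formula \eqref{eq:general} has the factor $1/z(T^{-1}[\ge k])$ while \eqref{eq:general2} has $1/t(T^{-1}[<k])$. Writing $T^{-1}[\ge k]=\lambda/\nu$ with $\nu=T^{-1}[<k]$, I claim $z(\lambda/\nu)=t(\nu)$ under the substitution. Indeed $z(\lambda/\nu) = \sum_{v\in\lambda/\nu} z_{c(v)} = \sum_i \sum_{c=\nu_i-i+1}^{\lambda_i-i} z_c$ (summing the contents of the cells in row $i$ that lie in $\lambda$ but not in $\nu$), and each inner sum telescopes to $t_{\lambda_i-i+n+1}-t_{\nu_i-i+n+1} = t_{\lambda_i+n-i+1}-t_{\nu_i+n-i+1}$; summing over $i$ gives exactly $t(\nu)$ as defined in Theorem~\ref{thm:general}. (One should note the degenerate case: if $\nu_i=\lambda_i$ the inner sum is empty and contributes $0$, consistent with both definitions.) Therefore the products over $k$ in the two left-hand sides coincide for every $T$, and the sums over $\SYT(\lambda/\mu)$ agree.

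The only genuine thing to be careful about is the content/hook bookkeeping and the indexing of the telescoping sum — in particular verifying that the content range over a hook really is the full integer interval $[j-\lambda'_j,\ \lambda_i-i]$, and that the index shift by $n$ lines up so that $U$-steps of the boundary path correspond to the $x$-variables $t_{\lambda_i+n-i+1}$. This is elementary but is where a sign or off-by-one error would creep in, so that is the step I would write out most carefully. Everything else is formal substitution.
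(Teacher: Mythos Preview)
Your proposal is correct and follows essentially the same approach as the paper: both use the substitution $z_{-n+i}=t_{i+1}-t_i$ and verify the equivalence by telescoping the content sums over hooks (for the right-hand side) and over rows of $\lambda/\nu$ (for the left-hand side). The paper additionally introduces the auxiliary notation $x_i=t_{\lambda_i+n-i+1}$ and $y_j=t_{j+n-\lambda'_j}$, but the underlying computation is the same as yours.
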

\begin{proof}
	Throughout the proof, 
	we use the notation from the Introduction (\Cref{sec:intro}),
	more precisely, the definitions 
	after formula~\eqref{eq:general} and in
	\Cref{thm:general}.

	We start from the right-hand sides.
	Given shapes $\mu \subset \la$, set $n=\ell(\la)$ and 
	$N=|\la/\mu|$.
	Given $z_{-n+1},\ldots,z_{\la_1-1}$ as
	in~\eqref{eq:general}, set $t_0=0$ and
	$t_i=z_{-n}+\cdots+z_{-n+i-1}$ for $i\geq 1$, so that
	$z_{-n+i}=t_{i+1} -t_{i}$. Next, rename
	$t_{\la_{i}+n+1-i}=x_{i}$, and denote the rest of $t_1,t_2\ldots$
	by $y_1,y_2,\ldots$.
	That is, $t=a^\lambda$
	\eqref{eq:a_lambda_def}.
	Observe that with this notation, if
	$u=(i,j)$ is a box in $\la$, then 
	we have telescoping along the hook:
	\begin{equation*}
		\begin{split}
			z(H(u)) &=
			z_{j-\la'_j}+z_{j-\la'_j+1} +\cdots +z_{\la_i-i} \\&=
			t_{\la_i-i+1} -
			t_{\la_i+n-i}+t_{\la_i+n-i}-t_{\la_i+n-i-1}-\cdots -
			t_{j+n-\la'_j} \\&= x_{i}-y_j.
		\end{split}
	\end{equation*}
	In the last equality, we noted that 
	$t_{j+n-\la'_j}=y_j$, as this is the $j$th horizontal step 
	of~$L$, the outer boundary of the Young diagram $\lambda$. 

	For the left-hand sides, pick $T\in \ssp\SYT(\lambda/\mu)$, and 
	let $\nu=T^{-1}[< k]$
	be an intermediate shape occupied
	by the entries $< k$ in $T$. 
	We have
	\begin{equation*}
		\begin{split}
			t(\nu)& = \sum_{i=1}^n ( x_i - t_{\nu_i+n-i+1})
			=
			\sum_{i=1}^n (x_{n+1-i} -t_{\nu_i+n-i+1}) 
			\\&=
			\sum_{i=1}^{n}
			(t_{\la_i +n-i+1} - t_{\nu_i+n-i+1}) 
			=\sum_{i=1}^{n} (z_{\nu_i-i+1}+\cdots+z_{\la_i-i}) 
			= z(\la/\nu),
		\end{split}
	\end{equation*}
	which completes the proof.
\end{proof}

Applying 
\Cref{prop:general_skew_SYT_sum}
with the Pieri rule and the vanishing property
for $F_\mu$ given in 
\Cref{cor:pieri} and \Cref{cor:vanishing}, respectively, we obtain
(with $t=a^\lambda$):
\begin{equation}\label{eq:last_step} 
	\sum_{\mu \subset \mu^1 \subset \mu^2 \cdots \subset \mu^N=\la} \frac{1}{t(\mu) t(\mu^1) \cdots t(\mu^{N-1})} \ssp F_\la(x \mid t)=
	F_\mu(x\mid a^\lambda),
\end{equation}
where $\mu^i/\mu^{i-1}=(1)$ for all $i$, and so each such sequence corresponds to a
standard Young tableau of shape $\lambda/\mu$.
Dividing both sides by 
$F_\lambda(x\mid a^\lambda)$ (given by \Cref{cor:vanishing}),
we identify the 
ratio $F_\mu(x\mid a^\lambda)/F_\lambda(x\mid a^\lambda)$
as a sum over excited diagrams $\mathcal{E}(\la/\mu)$,
thanks to \Cref{prop:excited}.
This completes the proof of \Cref{thm:general}.

\subsection{The generalized Okounkov-Olshanski formula}
In~\cite{morales2020okounkov}, Morales and Zhu obtained a
variant (via reverse excited diagrams in a shifted shape, or certain SSYTs)
of~\eqref{eq:skeq_syt_hook} which they coined as the
Okounkov-Olshanski formula (OOF). The derivations above can
be used to give a multivariate version of formula 
\eqref{eq:last_step},
too.
This derivation was suggested by Alejandro Morales. 

We start with equation~\eqref{eq:last_step} and the same
notation. 
Let us apply the combinatorial formula (\Cref{cor:factorial_combin})
to $F_\mu(x \mid a) = s_\mu(x \mid a) = s_\mu(x_n,\ldots,x_1
\mid a)$, where the variables can be reversed because these
polynomials are symmetric in the $x$'s. Then, let us 
substitute $a=a^\la$. We obtain:
\begin{equation*}
	\sum_{\mu \subset \mu^1 \subset \mu^2 \cdots \subset \mu^N=\la} \frac{1}{t(\mu) t(\mu^1) \cdots t(\mu^{N-1})} \ssp F_\la(x \mid a^\la) = \sum_{T \in \SSYT(\mu)} \prod_{u \in \mu}\bigl(x_{n+1-T(u)} - a^\la_{T(u)+c(u)}\bigr).
\end{equation*}
The terms in the product on the RHS can be written 
in terms of the parameters $t_j$:
% as $t_{\la_{n+1-T(u)} +T(u)} - t_{T(u)+c(u)}$.
%The SSYT can be interpreted as $\mathcal{E}(n^{N}/\mu)$, for $N$ large enough, so if $u=(a,b)$ and $T(u)=j$ then box $(a+j,b+j)$ 
\begin{equation*}
	\sum_{\mu \subset \mu^1 \subset \mu^2 \cdots \subset \mu^N=\la} \frac{1}{t(\mu) t(\mu^1) \cdots t(\mu^{N-1})} \ssp F_\la(x \mid a^\la) = \sum_{T \in \SSYT(\mu)} \prod_{u \in \mu}\bigl(t_{\la_{n+1-T(u)}+T(u)} - t_{T(u)+c(u)}\bigr).
\end{equation*}
Substituting the value for $F_\la(x \mid a^\la)$ from \Cref{cor:vanishing} we arrive at the following formula.
\begin{proposition}\label{prop:oof_multi}
   Let $\mu \subset \la$ be two Young diagrams and $t_1,t_2,\ldots$ formal variables. Then
   		\begin{equation}
				\label{eq:general_oof}
				 \tag{MOOF} 
				\begin{split}
				 &
				 \sum_{T \in \ssp\SYT(\la/\mu)} 
				 \prod_{k=1}^{|\lambda/\mu|}
				 \frac{1}{t(T^{-1}[< k])} 
				 =
				 \prod_{(i,j) \in \la} \frac{1}{ t_{\la_i+n-i+1} -t_{j+n-\la'_j} } 
				 \\ 
				 &\hspace{160pt}\times \Bigl(\sum_{T \in \SSYT(\mu)}
				 \prod_{u\in \mu} \bigl(t_{\la_{n+1-T(u)}+T(u)} - t_{T(u)+c(u)}\bigr)\Bigr), 
				\end{split}
		 \end{equation}
  where the notation follows \Cref{thm:general}.
\end{proposition}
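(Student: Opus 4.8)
The plan is to follow the route sketched in the paragraph preceding the statement, which reduces \Cref{prop:oof_multi} to combining \eqref{eq:last_step} with the combinatorial formula for factorial Schur polynomials. First I would recall \eqref{eq:last_step}, which gives
\[
	\Biggl(\sum_{\mu \subset \mu^1 \subset \cdots \subset \mu^N=\la} \frac{1}{t(\mu)\, t(\mu^1) \cdots t(\mu^{N-1})}\Biggr)\, F_\la(x\mid a^\la) = F_\mu(x\mid a^\la),
\]
and observe that the sum on the left is exactly $\sum_{T\in\SYT(\la/\mu)}\prod_{k=1}^{|\la/\mu|} 1/t(T^{-1}[<k])$ since each saturated chain $\mu\subset\mu^1\subset\cdots\subset\la$ corresponds to a standard Young tableau of shape $\la/\mu$, and along that tableau $\mu^{k-1}=T^{-1}[<k]$.

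Next I would expand $F_\mu(x\mid a^\la)$ using a different combinatorial interpretation than \Cref{prop:excited}: namely, by \Cref{thm:integral} we have $F_\mu(x\mid a) = s_\mu(x\mid a)$, and $s_\mu$ is symmetric in $x_1,\ldots,x_n$, so $F_\mu(x_1,\ldots,x_n\mid a) = s_\mu(x_n,\ldots,x_1\mid a)$. Applying \Cref{cor:factorial_combin} (the combinatorial formula \eqref{eq:combin} for factorial Schur polynomials) with the reversed variables gives
\[
	F_\mu(x\mid a) = \sum_{T\in\SSYT(\mu)} \prod_{u\in\mu}\bigl(x_{n+1-T(u)} - a_{T(u)+c(u)}\bigr),
\]
where the entries of $T$ range over $\{1,\ldots,n\}$. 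Now I would specialize $a=a^\la$. Two rewritings are needed: by \eqref{eq:a_lambda_def} the variable $x_{n+1-T(u)}$ equals $a^\la_{\la_{n+1-T(u)}+n-(n+1-T(u))+1} = a^\la_{\la_{n+1-T(u)}+T(u)}$, which in the $t$-notation is $t_{\la_{n+1-T(u)}+T(u)}$; and $a^\la_{T(u)+c(u)} = t_{T(u)+c(u)}$ directly since $t = a^\la$. This turns each factor into $t_{\la_{n+1-T(u)}+T(u)} - t_{T(u)+c(u)}$, giving the bracketed sum on the right-hand side of \eqref{eq:general_oof}.

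Finally I would substitute the value $F_\la(x\mid a^\la) = \prod_{(i,j)\in\la}(x_i-y_j)$ from \Cref{cor:vanishing}, note that by the telescoping computation in the proof of \Cref{prop:equivalence} one has $x_i - y_j = t_{\la_i+n-i+1} - t_{j+n-\la'_j}$, and divide both sides of \eqref{eq:last_step} by this product to isolate the SYT sum, yielding exactly \eqref{eq:general_oof}. The one point requiring a small amount of care — and the closest thing to an obstacle — is the index bookkeeping in the two rewritings of the $a^\la$-factors: one must check that when $T(u)=n$ the expression $\la_{n+1-T(u)}+T(u)=\la_1+n$ is still a valid index into the sequence $t_1,t_2,\ldots$, and more generally that reversing the variables in $s_\mu$ interacts correctly with the content shift $c(u)$ so that the row index appearing is $n+1-T(u)$ rather than $T(u)$. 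Everything else is a direct chain of substitutions into results already proved in the excerpt.
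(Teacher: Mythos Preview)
Your proposal is correct and follows essentially the same route as the paper: start from \eqref{eq:last_step}, use symmetry of $s_\mu$ to reverse the $x$-variables, apply the combinatorial formula \eqref{eq:combin}, substitute $a=a^\la$ and rewrite in the $t$-variables, then divide by $F_\la(x\mid a^\la)$ from \Cref{cor:vanishing}. The index bookkeeping you flag is handled exactly as you describe.
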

The tableaux appearing in the Okounkov-Olshanski formula are not all $\SSYT(\mu)$, as the terms involved vanish for some of them. Morales and Zhu have found several different characterizations of these tableaux and it remains to be understood whether any of these interpretation have nice meanings for the indices $\la_{n+1-T(u)}+T(u)$ and $T(u)+c(u)$. When we substitute $t_i=i$, then $t_{\la_{n+1-T(u)}+T(u)} - t_{T(u)+c(u)}=\la_{n+1-T(u)}-c(u)$ are the arm lengths of certain cells in the reversed shifted excited diagrams of~\cite{morales2020okounkov}. The multivariate formula above reduces to the original Okounkov-Olshanski formula for $f^{\la/\mu}$ in~\cite{okounkov1998shifted}.

\section{Proof via free fermion five-vertex model}
\label{sec:YBE_proof_of_Naruse}

In this section, we present the proof of 
\Cref{thm:general} using the free fermion five-vertex model.
This proof is completely independent
of the one by contour integrals (\Cref{sec:contour_integrals}),
but we comment on the identification 
of certain quantities arising in both approaches.

\subsection{Vertex weights and the Yang--Baxter equation}
\label{sub:vertex_weights_and_YBE}

We begin by recalling the five-vertex weights
which are related to the factorial Schur polynomials
\cite{lascoux20076, mcnamara2009factorial, bump2011factorial}.
See also \cite[Section~4.1]{ABPW2021free} and \cite{Naprienko2023} for generalizations
connecting free fermion six-vertex model
to most known Schur-type functions.

Consider the following vertex weights 
$w_{x}$:
\begin{equation}
	\label{eq:5v_usual_weights}
	\begin{split}	
	w_x
	\bigl( 
		\begin{tikzpicture}[baseline=-3,scale=.7,very thick]
        \draw[fill] (0,0) circle [radius=0.025];
        \draw [dotted] (0.5,0) -- (0.05,0);
        \draw [dotted] (-0.5,0) -- (-0.05,0);
        \draw [dotted] (0,0.05) -- (0, 0.5);
        \draw [dotted] (0,-0.05) -- (0,-0.5);
  \end{tikzpicture}
	\bigr)
	=
	w_x(0,0;0,0)
	=
	x,\quad 
	w_x
	\bigl( 
		\begin{tikzpicture}[baseline=-3,scale=.7,very thick]
			\draw[fill] (0,0) circle [radius=0.025];
        \draw [red] (0.5,0) -- (0.05,0);
        \draw [red] (-0.5,0) -- (-0.05,0);
        \draw [red] (0,0.05) -- (0, 0.5);
        \draw [red] (0,-0.05) -- (0,-0.5);
  \end{tikzpicture}
	\bigr)
	&=
	w_x(1,1;1,1)
	=
	0
	,\quad
	w_x
	\bigl( 
		\begin{tikzpicture}[baseline=-3,scale=.7,very thick]
			\draw[fill] (0,0) circle [radius=0.025];
        \draw [dotted] (0.5,0) -- (0.05,0);
        \draw [dotted] (-0.5,0) -- (-0.05,0);
        \draw [red] (0,-0.05) -- (0,-0.5);
        \draw[red](0,0.05) -- (0,0.5);
  \end{tikzpicture}
\bigr)
=
w_x(1,0;1,0)
=
1,\\ 
	w_x
	\bigl( 
		\begin{tikzpicture}[baseline=-3,scale=.7,very thick]
 \draw[fill] (0,0) circle [radius=0.025];
        \draw [red] (0.5,0) -- (0.05,0);
        \draw [red] (-0.5,0) -- (-0.05,0);
        \draw [dotted] (0,0.05) -- (0, 0.5);
        \draw [dotted] (0,-0.05) -- (0,-0.5);
  \end{tikzpicture}
	\bigr)=
	w_x(0,1;0,1)=
	1
	,\quad
	w_x
	\bigl( 
		\begin{tikzpicture}[baseline=-3,scale=.7,very thick]
			\draw[fill] (0,0) circle [radius=0.025];
				\draw [red] (0,0.5) -- (0,.05);
				\draw [dotted] (0.05,0) -- (0.5,0);
				\draw [dotted] (0,-0.5) -- (0,-0.05);
				\draw [red] (-0.05,0) -- (-0.5,0);
  \end{tikzpicture}
\bigr)&=w_x(0,1;1,0)
	=1
	,\quad 
	w_x
	\bigl( 
		\begin{tikzpicture}[baseline=-3,scale=.7,very thick]
\draw[fill] (0,0) circle [radius=0.025];
				\draw [red] (0,-0.5) -- (0,-.05);
				\draw [dotted] (-0.05,0) -- (-0.5,0);
				\draw [dotted] (0,0.5) -- (0,0.05);
				\draw [red] (0.05,0) -- (0.5,0);
  \end{tikzpicture}
	\bigr)
	=w_x(1,0;0,1)=1.
	\end{split}
\end{equation}
We also need the following dual weights $\check{w}_{y}$:
\begin{equation}
	\label{eq:5v_dual_weights}
	\begin{split}	
	\check{w}_y
	\bigl( 
		\begin{tikzpicture}[baseline=-3,scale=.7,very thick]
        \draw[fill] (0,0) circle [radius=0.025];
        \draw [dotted] (0.5,0) -- (0.05,0);
        \draw [dotted] (-0.5,0) -- (-0.05,0);
        \draw [dotted] (0,0.05) -- (0, 0.5);
        \draw [dotted] (0,-0.05) -- (0,-0.5);
  \end{tikzpicture}
	\bigr)
	&=
	\check{w}_y(0,0;0,0)
	=
	1,\quad 
	\check{w}_y
	\bigl( 
		\begin{tikzpicture}[baseline=-3,scale=.7,very thick]
			\draw[fill] (0,0) circle [radius=0.025];
        \draw [red] (0.5,0) -- (0.05,0);
        \draw [red] (-0.5,0) -- (-0.05,0);
        \draw [red] (0,0.05) -- (0, 0.5);
        \draw [red] (0,-0.05) -- (0,-0.5);
  \end{tikzpicture}
	\bigr)
	=
	\check{w}_y(1,1;1,1)
	=
	0
	,\quad
	\check{w}_y
	\bigl( 
		\begin{tikzpicture}[baseline=-3,scale=.7,very thick]
			\draw[fill] (0,0) circle [radius=0.025];
        \draw [dotted] (0.5,0) -- (0.05,0);
        \draw [dotted] (-0.5,0) -- (-0.05,0);
        \draw [red] (0,-0.05) -- (0,-0.5);
        \draw[red](0,0.05) -- (0,0.5);
  \end{tikzpicture}
\bigr)
=
\check{w}_y(1,0;1,0)
=
\tfrac{1}{y},\\
	\check{w}_y
	\bigl( 
		\begin{tikzpicture}[baseline=-3,scale=.7,very thick]
 \draw[fill] (0,0) circle [radius=0.025];
        \draw [red] (0.5,0) -- (0.05,0);
        \draw [red] (-0.5,0) -- (-0.05,0);
        \draw [dotted] (0,0.05) -- (0, 0.5);
        \draw [dotted] (0,-0.05) -- (0,-0.5);
  \end{tikzpicture}
	\bigr)&=
	\check{w}_y(0,1;0,1)=
	\tfrac{1}{y}
	,\quad
	\check{w}_y
	\bigl( 
		\begin{tikzpicture}[baseline=-3,scale=.7,very thick]
			\draw[fill] (0,0) circle [radius=0.025];
				\draw [red] (0,0.5) -- (0,.05);
				\draw [red] (-0.05,0) -- (-0.5,0);
				\draw [dotted] (0,-0.5) -- (0,-0.05);
				\draw [dotted] (0.05,0) -- (0.5,0);
  \end{tikzpicture}
\bigr)=\check{w}_y(0,1;1,0)
=\tfrac{1}{y}
	,\quad 
	\check{w}_y
	\bigl( 
		\begin{tikzpicture}[baseline=-3,scale=.7,very thick]
\draw[fill] (0,0) circle [radius=0.025];
				\draw [red] (0,-0.5) -- (0,-.05);
				\draw [red] (0.05,0) -- (0.5,0);
				\draw [dotted] (0,0.5) -- (0,0.05);
				\draw [dotted] (-0.05,0) -- (-0.5,0);
  \end{tikzpicture}
	\bigr)
	=\check{w}_y(1,0;0,1)=\tfrac{1}{y}.
	\end{split}
\end{equation}
We set
$w_x(i_1,j_1;i_2,j_2)=\check{w}_y(i_1,j_1;i_2,j_2)=0$
for all choices of $i_1,j_1,i_2,j_2\in\{0,1\}$ not listed in 
\eqref{eq:5v_usual_weights} and \eqref{eq:5v_dual_weights}.
Clearly, $\check{w}_y(i_1,j_1;i_2,j_2)=y^{-1} w_y(i_1,j_1;i_2,j_2)$, 
but these weights play two very different roles, so we will keep this separate notation.

The weights 
\eqref{eq:5v_usual_weights}--\eqref{eq:5v_dual_weights} satisfy the Yang--Baxter equation
with the following weights $r=r_{z}$:
\begin{equation}
	\label{eq:r_weights}
	\begin{split}	
	r_z
	\bigl( 
		\begin{tikzpicture}[baseline=-3,scale=.7,very thick]
        \draw[fill] (0,0) circle [radius=0.025];
        \draw [dotted] (0.5,0) -- (0.05,0);
        \draw [dotted] (-0.5,0) -- (-0.05,0);
        \draw [dotted] (0,0.05) -- (0, 0.5);
        \draw [dotted] (0,-0.05) -- (0,-0.5);
  \end{tikzpicture}
	\bigr)
	=
	r_z(0,0;0,0)
	=
	1,\quad 
	r_z
	\bigl( 
		\begin{tikzpicture}[baseline=-3,scale=.7,very thick]
			\draw[fill] (0,0) circle [radius=0.025];
        \draw [red] (0.5,0) -- (0.05,0);
        \draw [red] (-0.5,0) -- (-0.05,0);
        \draw [red] (0,0.05) -- (0, 0.5);
        \draw [red] (0,-0.05) -- (0,-0.5);
  \end{tikzpicture}
	\bigr)
	&=
	r_z(1,1;1,1)
	=
	1
	,\quad
	r_z
	\bigl( 
		\begin{tikzpicture}[baseline=-3,scale=.7,very thick]
			\draw[fill] (0,0) circle [radius=0.025];
        \draw [dotted] (0.5,0) -- (0.05,0);
        \draw [dotted] (-0.5,0) -- (-0.05,0);
        \draw [red] (0,-0.05) -- (0,-0.5);
        \draw[red](0,0.05) -- (0,0.5);
  \end{tikzpicture}
\bigr)
=
r_z(1,0;1,0)
=
z,\\ 
	r_z
	\bigl( 
		\begin{tikzpicture}[baseline=-3,scale=.7,very thick]
 \draw[fill] (0,0) circle [radius=0.025];
        \draw [red] (0.5,0) -- (0.05,0);
        \draw [red] (-0.5,0) -- (-0.05,0);
        \draw [dotted] (0,0.05) -- (0, 0.5);
        \draw [dotted] (0,-0.05) -- (0,-0.5);
  \end{tikzpicture}
	\bigr)=
	r_z(0,1;0,1)=
	0
	,\quad
	r_z
	\bigl( 
		\begin{tikzpicture}[baseline=-3,scale=.7,very thick]
			\draw[fill] (0,0) circle [radius=0.025];
				\draw [red] (0,0.5) -- (0,.05);
				\draw [dotted] (0.05,0) -- (0.5,0);
				\draw [dotted] (0,-0.5) -- (0,-0.05);
				\draw [red] (-0.05,0) -- (-0.5,0);
  \end{tikzpicture}
\bigr)&=r_z(0,1;1,0)
	=1
	,\quad 
	r_z
	\bigl( 
		\begin{tikzpicture}[baseline=-3,scale=.7,very thick]
\draw[fill] (0,0) circle [radius=0.025];
				\draw [red] (0,-0.5) -- (0,-.05);
				\draw [dotted] (-0.05,0) -- (-0.5,0);
				\draw [dotted] (0,0.5) -- (0,0.05);
				\draw [red] (0.05,0) -- (0.5,0);
  \end{tikzpicture}
	\bigr)
	=r_z(1,0;0,1)=1.
	\end{split}
\end{equation}
Like for \eqref{eq:5v_usual_weights}--\eqref{eq:5v_dual_weights},
the weights $r_z$ \eqref{eq:r_weights} are nonzero on \defn{five} out of six 
configurations which conserve the total number of incoming and outgoing paths at a vertex
(that is, $i_1+j_1=i_2+j_2$).
However, under $r_z$, the paths are allowed to meet at a vertex.

\begin{remark}
	\label{rmk:free_fermion}
	Each of the weights $w_x,\check{w}_y$, and $r_z$ satisfies the free fermion condition
	\begin{equation*}
		w(0,0;0,0)\ssp w(1,1;1,1)+w(1,0;1,0)\ssp w(0,1;0,1)=w(0,1;1,0)\ssp w(1,0;0,1),
	\end{equation*}
	which allows to write many partition functions 
	(i.e., sums of products of vertex weights over all configurations of paths in a region
	with fixed boundary conditions) as determinants.
	See \cite{Naprienko2023} for the most general case of free fermion six-vertex model.
	Note that partition functions for the general six-vertex model
	also take determinantal form for special boundary conditions.
	The most well-known example of this phenomenon 
	is the Izergin--Korepin determinant \cite{korepin1982calculation,Izergin1987}.
\end{remark}

The \defn{spectral parameters} $x,y,z$ in \eqref{eq:5v_usual_weights}--\eqref{eq:r_weights}
may be thought of as generic complex numbers, and the Yang--Baxter equation holds under 
the condition that $z=y-x$:

\begin{proposition}[Yang--Baxter equation]
	\label{prop:YBE}
	For any $i_1,i_2,i_3,j_1,j_2,j_3\in\left\{ 0,1 \right\}$
	and all $x,y,t$ with $x\ne t$,
	we have
	\begin{equation}
		\label{eq:5V_YBE_R}
		\begin{split}
			&\sum\nolimits_{k_1,k_2,k_3}
			w_{x-y}(i_3,k_1;k_3,j_1)
			\ssp
			\check{w}_{x-t}(i_2,i_1;k_2,k_1)
			\ssp
			r_{y-t}(k_3,k_2;j_3,j_2)
			\\&\hspace{40pt}=
			\sum\nolimits_{k_1',k_2',k_3'}
			w_{x-y}(k_3',i_1;j_3,k_1')
			\ssp
			\check{w}_{x-t}(k_2',k_1';j_2,j_1)
			\ssp
			r_{y-t}(i_3,i_2;k_3',k_2').
		\end{split}
		%Checked in mathematica
	\end{equation}
	where all sums are over $k_1,k_2,k_3\in\left\{ 0,1 \right\}$
	or $k_1',k_2',k_3'\in\left\{ 0,1 \right\}$.
	See \Cref{fig:5V_YBE} for illustration.
\end{proposition}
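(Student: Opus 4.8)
The plan is to prove \eqref{eq:5V_YBE_R} by a direct but structured verification, reducing the $2^6$ a priori relations (one for each choice of the boundary data $i_1,i_2,i_3,j_1,j_2,j_3$) to a handful of elementary scalar identities. As a first simplification I would use that $\check w_y=y^{-1}w_y$: multiplying \eqref{eq:5V_YBE_R} through by $x-t$ turns each of the two factors $\check w_{x-t}$ into $w_{x-t}$, so it suffices to prove the resulting relation, which has the familiar ``$RLL=LLR$'' shape with the five-vertex weight $w$ serving as the $L$-operator (at the two spectral parameters $x-y$ and $x-t$) and $r$ as the $R$-matrix. The hypothesis $x\ne t$ is exactly what makes this clearing of denominators reversible. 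Next, I would observe that each of $w$ and $r$ is supported on vertices with $i_1+j_1=i_2+j_2$ (total occupancy incoming $=$ outgoing); adding this local conservation law over the three vertices on either side causes the internal edges $k_1,k_2,k_3$ (respectively $k_1',k_2',k_3'$) to cancel, so that both sides of \eqref{eq:5V_YBE_R} vanish unless $i_1+i_2+i_3=j_1+j_2+j_3=:N$. It therefore remains only to treat $N\in\{0,1,2,3\}$.

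The extreme cases are immediate: for $N=0$, conservation forces every internal edge to $0$ on each side, so both sides reduce to the same single product of vertex weights; for $N=3$, conservation forces every internal edge to $1$, so both sides carry the vanishing factor $w_{x-t}(1,1;1,1)=0$ and hence vanish. The substance lies in $N=1$ and $N=2$. For each of these I would run through the nine choices of which incoming edge $i_a$ and which outgoing edge $j_b$ is occupied (respectively empty, for $N=2$); in every instance conservation pins the admissible internal configurations down to at most two, and the identity collapses to a one-line check. The mechanism recurring in the nontrivial instances is a telescoping of the type $(y-t)+(x-y)=x-t$ among the surviving terms --- precisely the free fermion condition of \Cref{rmk:free_fermion} in action --- so each case closes on the nose.

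I expect the only genuine difficulty to be bookkeeping rather than mathematical: one must keep straight the convention behind the weight tables (in $(i_1,j_1;i_2,j_2)$ the four entries are the occupancies of the bottom, left, top, and right edges) and the incidence pattern of the internal edges in the two hexagons of \eqref{eq:5V_YBE_R}, as drawn in \Cref{fig:5V_YBE}; once these are fixed the enumeration is routine. As a conceptual cross-check, all three families $w_x$, $\check w_y$, $r_z$ are specializations of the free fermion six-vertex $R$-matrix, for which \eqref{eq:5V_YBE_R} is an instance of the general free fermion Yang--Baxter equation (see \cite{ABPW2021free,Naprienko2023}); one could alternatively just invoke that result, but carrying out the direct verification keeps the proof self-contained.
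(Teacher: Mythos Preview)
Your proposal is correct and takes essentially the same approach as the paper: the paper's proof is the one-liner that \eqref{eq:5V_YBE_R} ``is an identity of rational functions in $x,y,t$ which is directly checked,'' and your structured case analysis (clearing the $\check w_{x-t}$ denominator, invoking the conservation law $i_1+i_2+i_3=j_1+j_2+j_3$, and splitting into $N=0,1,2,3$) is exactly a careful organization of that direct verification. The only caveat is that your assertion ``$(y-t)+(x-y)=x-t$ \ldots\ is precisely the free fermion condition of \Cref{rmk:free_fermion} in action'' is a bit loose --- the telescoping is the parametrization of the weights rather than the free fermion quadratic relation per se --- but this does not affect the validity of the argument.
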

\begin{proof}
	For each 
	$i_1,i_2,i_3,j_1,j_2,j_3\in\left\{ 0,1 \right\}$,
	equation \eqref{eq:5V_YBE_R} is an identity of rational functions in $x,y,t$
	which is directly checked.
\end{proof}

\begin{figure}[htpb]
	\centering
	\begin{tikzpicture}
		[scale=1,thick]
		
		\draw[line width=3pt, gray!30] (0,0) --++ (3,0);
		\draw[line width=3pt, gray!30] (2,-1) --++ (0,3);
		\draw[line width=3pt, blue!30] (1,-1) --++ (0,1) to[out=90,in=180] (2,1) --++ (1,0);
		\draw[fill] (1,0) circle [radius=1.5pt] node (wc1) {};
				\node(wc1l) at (0,1) {$\scriptsize\check{w}_{x-t}$};
				\draw[->] (wc1l) -- (wc1);
		\draw[fill] (2,1) circle [radius=1.5pt] node (r1) {};
				\node(r1l) at (3,2) {$\scriptsize r_{y-t}$};
				\draw[->] (r1l) -- (r1);
		\draw[fill] (2,0) circle [radius=1.5pt] node (w1) {};
				\node(w1l) at (3,-1) {$\scriptsize w_{x-y}$};
				\draw[->] (w1l) -- (w1);

				\node[left] at (0,0) {\scriptsize$i_1$};
				\node[left] at (1,-1) {\scriptsize$i_2$};
				\node[left] at (2,-1) {\scriptsize$i_3$};
				\node[above] at (3,0) {\scriptsize$j_1$};
				\node[above] at (3,1) {\scriptsize$j_2$};
				\node[left] at (2,2) {\scriptsize$j_3$};
				\node[above, blue] at (1.5,0) {\scriptsize$k_1$};
				\node[above, blue] at (1.25,.75) {\scriptsize$k_2$};
				\node[right, blue] at (2,.5) {\scriptsize$k_3$};

		\node at (4,.5) {\huge$=$};

		\begin{scope}[shift={(3,0)}]
			\draw[line width=3pt, gray!30] (2,1) --++ (3,0);
			\draw[line width=3pt, gray!30] (3,-1) --++ (0,3);
			\draw[line width=3pt, blue!30] (2,0) --++ (1,0) to[out=0,in=-90] (4,1) --++ (0,1);
			\draw[fill] (3,1) circle [radius=1.5pt] node (w2) {};
			\draw[fill] (3,0) circle [radius=1.5pt] node (r2) {};
			\draw[fill] (4,1) circle [radius=1.5pt] node (wc2) {};

			\node(wc2l) at (5,0) {$\scriptsize\check{w}_{x-t}$};
			\draw[->] (wc2l) -- (wc2);
			\node(r2l) at (4,-1) {$\scriptsize r_{y-t}$};
			\draw[->] (r2l) -- (r2);
			\node(w2l) at (2,2) {$\scriptsize w_{x-y}$};
			\draw[->] (w2l) -- (w2);

				\node[left] at (2,1) {\scriptsize$i_1$};
				\node[left] at (2,0) {\scriptsize$i_2$};
				\node[left] at (3,-1) {\scriptsize$i_3$};
				\node[above] at (5,1) {\scriptsize$j_1$};
				\node[right] at (4,2) {\scriptsize$j_2$};
				\node[right] at (3,2) {\scriptsize$j_3$};
				\node[above, blue] at (3.5,1) {\scriptsize$k'_1$};
				\node[left, blue] at (3,.5) {\scriptsize$k'_3$};
				\node[below, blue] at (3.75,.25) {\scriptsize$k'_2$};
		\end{scope}
		%Checked in mathematica
\end{tikzpicture}
	\caption{Graphical representation of the Yang--Baxter equation \eqref{eq:5V_YBE_R}
	which states that for any fixed boundary conditions
	$i_1,i_2,i_3,j_1,j_2,j_3\in\left\{ 0,1 \right\}$,
	the partition functions on the left and on the right
	are equal. The Yang--Baxter equation is nontrivial only if $i_1+i_2+i_3=j_1+j_2+j_3$.}
	\label{fig:5V_YBE}
\end{figure}
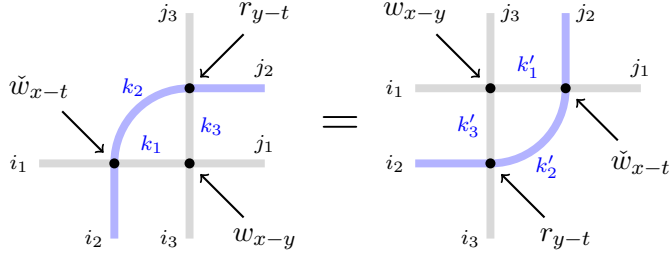

\subsection{Excited diagrams as configurations of the five-vertex model}
\label{sub:5V_excited_diagrams}

Fix two Young diagrams $\mu$ and $\la$ such that $\mu\subset\la$.
Recall the set of excited diagrams $\mathcal{E}(\la/\mu)$
described in \Cref{ss:excited}.
Let $x_1,x_2,\ldots,y_1,y_2,\ldots$ be generic complex numbers
such that $x_i\ne y_j$ for all $i,j$.
Define the following sum over excited diagrams:
\begin{equation}
	\label{eq:def_excited_sum}
	\mathsf{Z}_\mu(\lambda)\coloneqq
	\sum_{D\in \mathcal{E}(\lambda/\mu)}
	\prod_{(i,j)\in D}(x_i-y_j).
\end{equation}
When $\mu\not \subseteq \la$, we set $\mathsf{Z}_\mu(\lambda)=0$.

Clearly, 
\begin{equation}
	\label{eq:F_mu_is_excited_RHS_of_Naruse}
	\mathsf{Z}_\mu(\lambda) = 
	\prod_{(i,j)\in \lambda}(x_i-y_j)
	\sum_{D \in \mathcal{E}(\la/\mu)} \prod_{(i,j) \in \la \setminus D} \frac{1}{x_i-y_j},
\end{equation}
and the sum in the right-hand side is the same as the right-hand
side of the multivariate hook-length formula \eqref{eq:general2}.
Our goal in the present \Cref{sec:YBE_proof_of_Naruse} is to find a representation of 
$\mathsf{Z}_\mu(\lambda)$ as a sum over skew standard Young tableaux.
For this, we will verify the vanishing property and a Pieri rule for 
$\mathsf{Z}_\mu(\lambda)$, following the general strategy
outlined in \Cref{sub:general_formalism_general}. 

\begin{remark}[Connection to factorial Schur polynomials]
	\label{rmk:connection_to_S4}
	The argument in this section 
	is independent from the rest of the paper, and
	does not rely on properties of factorial Schur polynomials.
	More precisely, $\mathsf{Z}_\mu(\lambda)$ is a 
	\defn{specialized} factorial Schur polynomial
	$s_\mu(x \mid a^\lambda)=F_\mu(x \mid a^\lambda)$.
	Here we use only these specialized quantities, and not 
	the general parameters $a$.
\end{remark}

\begin{remark}
	\label{rmk:excited_diagrams_vanishing}
	The vanishing property 
	$\mathsf{Z}_\mu(\lambda)=0$ for $\mu \not \subseteq \la$
	is a part of the definition~\eqref{eq:def_excited_sum}, and 
	we also immediately have
	\begin{equation}
		\label{eq:F_lambda_lambda_product}
		\mathsf{Z}_\lambda(\lambda)=\prod_{(i,j)\in \lambda}(x_i-y_j),
	\end{equation}
	which is nonzero by our assumptions.
\end{remark}

In the present \Cref{sub:5V_excited_diagrams}, we identify the sum 
$\mathsf{Z}_\mu(\lambda)$
in \eqref{eq:def_excited_sum} as a partition function of the 
five-vertex model with the weights \eqref{eq:5v_usual_weights}.
This is done in several steps.

\paragraph{Domain.}
Let $\Omega=\{(i,j)\colon i,j\ge1\}$ be the set of all boxes
in the bottom right quadrant. Here $i$ and $j$ are the
row and column coordinates, with $i$ increasing down, and $j$ increasing to the right.
We will represent boxes by vertices in the five-vertex model, and in this way
the domain $\Omega$ becomes the quadrant $\mathbb{Z}_{\ge1}^{2}$ in the square grid.
Let 
\begin{equation*}
	\Omega_\lambda\coloneqq\{(i,j)\colon 1\le j\le \lambda_i \text{ for all } i\ge1\}
	\subseteq \Omega
\end{equation*}
be the set of all boxes in the Young diagram $\lambda$,
identified with a subset of the square grid.
See \Cref{fig:lambda_5V_conf}, right, for an illustration of $\Omega_\lambda$
for $\lambda=(6,6,5,5,4)$. 

\paragraph{Weights.}
Assign spectral parameters $x_1,x_2,\ldots $ to the rows~$i$,
and spectral parameters $y_1,y_2,\ldots $ to the columns~$j$. Let the
weight at each vertex $(i,j)\in \Omega_\lambda$ be $w_{x_i-y_j}$.

Consider a configuration of paths of the five-vertex model
in $\Omega_\lambda$, such that the paths travel in the
up-right direction and are allowed to enter and exit
$\Omega_{\lambda}$ only through the southeast broken line
border of the Young diagram $\lambda$ (i.e., not through
the west and north straight boundaries of the quadrant
$\Omega$ in which $\lambda$ is placed). Two paths are
nonintersecting; that is, they are
not allowed to pass through the same vertex because
$w_{x_i-y_j}(1,1;1,1)=0$.
Each configuration of paths 
is identified with an excited diagram $D$
whose boxes are precisely the empty vertices $(0,0;0,0)$.
Recall that $w_{x_i-y_j}(0,0;0,0)=x_i-y_j$,
and the 
weights of all other vertices are $1$.
Thus, the weight of a five-vertex path configuration
is equal to $\prod_{(i,j)\in D}(x_i-y_j)$.
See \Cref{fig:lambda_5V_conf} for an illustration.

\ytableausetup{boxsize=3.5ex}
\begin{figure}[htpb]
	\centering
	\begin{tikzpicture}
		\def\offsetx{0.145}
		\def\offsety{-1.1875}
		\def\sqsize{0.595}
		\def\halfsqsize{0.2975}
		\node[anchor=west] at (0,0) {\ydiagram[*(cyan)]{2,1}*[*(cyan)]{1+0,3+1}*[*(cyan)]{1+0,1+0,2+1,1+1,3+1}*[*(cyan)]{1+0,1+0,1+0,4+1}*{6,6,5,5,4}};
		\draw[ultra thick, red,->] (\offsetx+\halfsqsize,\offsety-\sqsize) --++ (0,\halfsqsize) ;
		\draw[ultra thick, red,->] (\offsetx+\halfsqsize+\sqsize,\offsety-\sqsize) --++ (0,\halfsqsize) ;
		\draw[ultra thick, red] 
		(\offsetx+\halfsqsize,\offsety-\sqsize) --++ 
		(0,\sqsize*3)--++(\sqsize,0)--++(0,\sqsize)--++(\sqsize,0)--++(0,\sqsize)--++(4*\sqsize,0);
		\draw[ultra thick, red,->] (\offsetx+6.04*\sqsize,\offsety+4*\sqsize) --++ (\halfsqsize,0) ;
		\draw[ultra thick, red] 
		(\offsetx+\sqsize+\halfsqsize,\offsety-\sqsize) --++ (0,\sqsize)--++(\sqsize,0)
		 --++ (0,\sqsize)--++(\sqsize,0) --++ (0,\sqsize)--++(\sqsize,0) --++ (0,\sqsize)--++(2*\sqsize,0);
		\draw[ultra thick, red,->] (\offsetx+6.04*\sqsize,\offsety+3*\sqsize) --++ (\halfsqsize,0) ;
	\end{tikzpicture}
	\qquad \qquad \qquad 
	\begin{tikzpicture}
		\def\offsetx{0.145}
		\def\offsety{-1.1875}
		\def\sqsize{0.595}
		\def\halfsqsize{0.2975}
		\draw[line width=4pt, gray!30]  (\offsetx+\halfsqsize,\offsety-\sqsize+\halfsqsize)--++(0,5*\sqsize);
		\draw[line width=4pt, gray!30]  (\offsetx+\halfsqsize+\sqsize,\offsety-\sqsize+\halfsqsize)--++(0,5*\sqsize);
		\draw[line width=4pt, gray!30]  (\offsetx+\halfsqsize+2*\sqsize,\offsety-\sqsize+\halfsqsize)--++(0,5*\sqsize);
		\draw[line width=4pt, gray!30]  (\offsetx+\halfsqsize+3*\sqsize,\offsety-\sqsize+\halfsqsize)--++(0,5*\sqsize);
		\draw[line width=4pt, gray!30]  (\offsetx+\halfsqsize+4*\sqsize,\offsety+\halfsqsize)--++(0,4*\sqsize);
		\draw[line width=4pt, gray!30]  (\offsetx+\halfsqsize+5*\sqsize,\offsety+2*\sqsize+\halfsqsize)--++(0,2*\sqsize);
		\draw[line width=4pt, gray!30]  (\offsetx,\offsety)--++(4*\sqsize,0);
		\draw[line width=4pt, gray!30]  (\offsetx,\offsety+\sqsize)--++(5*\sqsize,0);
		\draw[line width=4pt, gray!30]  (\offsetx,\offsety+2*\sqsize)--++(5*\sqsize,0);
		\draw[line width=4pt, gray!30]  (\offsetx,\offsety+3*\sqsize)--++(6*\sqsize,0);
		\draw[line width=4pt, gray!30]  (\offsetx,\offsety+4*\sqsize)--++(6*\sqsize,0);
		\draw[ultra thick, red,->] (\offsetx+\halfsqsize,\offsety-\sqsize) --++ (0,\halfsqsize) ;
		\draw[ultra thick, red,->] (\offsetx+\halfsqsize+\sqsize,\offsety-\sqsize) --++ (0,\halfsqsize) ;
		\draw[ultra thick, red] 
		(\offsetx+\halfsqsize,\offsety-\sqsize) --++ 
		(0,\sqsize*3)--++(\sqsize,0)--++(0,\sqsize)--++(\sqsize,0)--++(0,\sqsize)--++(4*\sqsize,0);
		\draw[ultra thick, red,->] (\offsetx+6.04*\sqsize,\offsety+4*\sqsize) --++ (\halfsqsize,0) ;
		\draw[ultra thick, red] 
		(\offsetx+\sqsize+\halfsqsize,\offsety-\sqsize) --++ (0,\sqsize)--++(\sqsize,0)
		 --++ (0,\sqsize)--++(\sqsize,0) --++ (0,\sqsize)--++(\sqsize,0) --++ (0,\sqsize)--++(2*\sqsize,0);
		\draw[ultra thick, red,->] (\offsetx+6.04*\sqsize,\offsety+3*\sqsize) --++ (\halfsqsize,0) ;
		\node[left] at (\offsetx,\offsety) {$x_5$};
		\node[left] at (\offsetx,\offsety+\sqsize) {$x_4$};
		\node[left] at (\offsetx,\offsety+2*\sqsize) {$x_3$};
		\node[left] at (\offsetx,\offsety+3*\sqsize) {$x_2$};
		\node[left] at (\offsetx,\offsety+4*\sqsize) {$x_1$};
		\node[above] at (\offsetx+\halfsqsize,\offsety+4.5*\sqsize) {$y_1$};
		\node[above] at (\offsetx+\sqsize+\halfsqsize,\offsety+4.5*\sqsize) {$y_2$};
		\node[above] at (\offsetx+2*\sqsize+\halfsqsize,\offsety+4.5*\sqsize) {$y_3$};
		\node[above] at (\offsetx+3*\sqsize+\halfsqsize,\offsety+4.5*\sqsize) {$y_4$};
		\node[above] at (\offsetx+4*\sqsize+\halfsqsize,\offsety+4.5*\sqsize) {$y_5$};
		\node[above] at (\offsetx+5*\sqsize+\halfsqsize,\offsety+4.5*\sqsize) {$y_6$};
	\end{tikzpicture}
	\caption{A configuration of the five-vertex model
		in the domain $\Omega_\lambda$ for $\lambda=(6,6,5,5,4)$
		superimposed on an excited diagram (left), and 
		the same path configuration in the domain 
		$\Omega_\lambda$ (right). In the right picture, we also indicated the spectral parameters $x_i,y_j$
		along the lines. The weight of this configuration is 
		$(x_1-y_1)(x_1-y_2)(x_2-y_1)(x_2-y_4)(x_3-y_3)(x_4-y_2)(x_4-y_5)(x_5-y_4)$.
		The left picture is essentially the same as the third
		picture in \Cref{fig:excited_diagrams}.}
	\label{fig:lambda_5V_conf}
\end{figure}
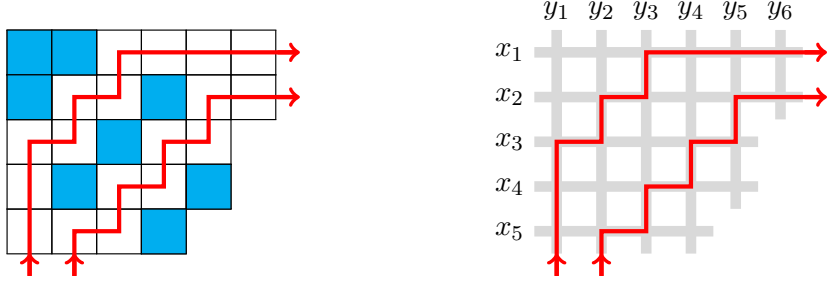

\paragraph{Boundary conditions.}
An elementary diagonal move 
(\Cref{ss:excited})
of a box in an excited diagram is the same as 
the flip $(\mathrm{right}, \mathrm{up})\to (\mathrm{up}, \mathrm{right})$
of a path in the vertex model path configuration. 
Therefore, the set of 
all excited diagrams $D\in \mathcal{E}(\lambda/\mu)$
(for some $\mu\subseteq \lambda$)
is in bijection with the set of all path configurations in $\Omega_\lambda$
with a fixed boundary condition. 
Here by a boundary condition we mean a binary string
along the southeast border of $\lambda$,
where $1$ encodes a entering/exiting path, and $0$ means no path.
For example, the boundary condition for the five-vertex model in
\Cref{fig:lambda_5V_conf} is $11000000011$.

For an arbitrary Young diagram $\mu$, let us define a rim-hook decomposition
of $\Omega\setminus \Omega_\mu$
into parallel translations of 
the first (infinite)
outer rim-hook
\begin{equation*}
	R_\mu^{(1)}\coloneqq
	\bigcup_{i=1}^{\infty}\left\{ (i,j)\colon \mu_i+1\le j\le \mu_{i-1}+1 \right\},
\end{equation*}
where, by agreement, $\mu_0=+\infty$.
Define by $R_\mu^{(k)}$, $k\ge2$, the parallel translation of $R_\mu^{(1)}$
by the vector $(i,j)=(k-1,k-1)$ (that is, by $k-1$ in the southeast direction).
We refer to the $R_\mu^{(k)}$'s as the \defn{$\mu$-rim-hooks}.
We have
\begin{equation*}
	\Omega\setminus \Omega_\mu=\bigcup_{k=1}^{\infty}R_\mu^{(k)}.
\end{equation*}
See \Cref{fig:rim_hook_decomposition} for an illustration.

\begin{figure}[htpb]
	\centering
	\begin{tikzpicture}
		\def\sqsize{0.6}
		\def\halfsqsize{0.3}
		\foreach \ii in {0,...,6}
		{
			\draw[line width=4pt, gray!30]  (0,0 +\ii*\sqsize)--++(9*\sqsize,0);
		}
		\foreach \jj in {0,...,8}
		{
			\draw[line width=4pt, gray!30]  (0+\halfsqsize+\jj*\sqsize,0-\halfsqsize)--++(0,7*\sqsize);
		}
		\node[left] at (0,0+6*\sqsize) {$x_1$};
		\node[left] at (0,0+5*\sqsize) {$x_2$};
		\node[left] at (0,0+4*\sqsize) {$x_3$};
		\node[left] at (0,0+3*\sqsize) {$x_4$};
		\node[left] at (0,0+2*\sqsize) {$x_5$};
		\node[left] at (0,0+1*\sqsize) {$x_6$};
		\node[left] at (0,0) {$\vdots$};
		\node[above] at (0+\halfsqsize,0+6.5*\sqsize) {$y_1$};
		\node[above] at (0+1.5*\sqsize,0+6.5*\sqsize) {$y_2$};
		\node[above] at (0+2.5*\sqsize,0+6.5*\sqsize) {$y_3$};
		\node[above] at (0+3.5*\sqsize,0+6.5*\sqsize) {$y_4$};
		\node[above] at (0+4.5*\sqsize,0+6.5*\sqsize) {$y_5$};
		\node[above] at (0+5.5*\sqsize,0+6.5*\sqsize) {$y_6$};
		\node[above] at (0+6.5*\sqsize,0+6.5*\sqsize) {$y_7$};
		\node[above] at (0+7.5*\sqsize,0+6.5*\sqsize) {$y_8$};
		\node[above] at (0+8.5*\sqsize,0+6.5*\sqsize) {$\ldots$};
		\draw[blue, ultra thick] (0,6.5*\sqsize) --++ (5*\sqsize,0)--++(0,-\sqsize)--++(-\sqsize,0)
		--++(0,-\sqsize)--++(-3*\sqsize,0)--++(0,-\sqsize)--++(-\sqsize,0)--cycle;
		\draw[red!50, line width=6pt] (\halfsqsize,-.5*\sqsize)--++(0,3.5*\sqsize) --++ (1*\sqsize,0)--++(0,1*\sqsize)--++(3*\sqsize,0)--++(0,1*\sqsize)--++(1*\sqsize,0)--++(0,1*\sqsize)--++(3.5*\sqsize,0);
		\draw[green!70!black, line width=6pt] (\halfsqsize+\sqsize,-.5*\sqsize)--++(0,2.5*\sqsize) --++ (1*\sqsize,0)--++(0,1*\sqsize)--++(3*\sqsize,0)--++(0,1*\sqsize)--++(1*\sqsize,0)--++(0,1*\sqsize)--++(2.5*\sqsize,0);
		\draw[yellow!60!black, line width=6pt] (\halfsqsize+2*\sqsize,-.5*\sqsize)--++(0,1.5*\sqsize) --++ (1*\sqsize,0)--++(0,1*\sqsize)--++(3*\sqsize,0)--++(0,1*\sqsize)--++(1*\sqsize,0)--++(0,1*\sqsize)--++(1.5*\sqsize,0);
		\draw[red!70!black, line width=6pt] (\halfsqsize+3*\sqsize,-.5*\sqsize)--++(0,.5*\sqsize) --++ (1*\sqsize,0)--++(0,1*\sqsize)--++(3*\sqsize,0)--++(0,1*\sqsize)--++(1*\sqsize,0)--++(0,1*\sqsize)--++(.5*\sqsize,0);
		\draw[blue!60, line width=6pt] (\halfsqsize+5*\sqsize,-.5*\sqsize)--++(0,.5*\sqsize)--++(3*\sqsize,0)--++(0,1*\sqsize)--++(.5*\sqsize,0);
		\node[right] at (9*\sqsize,6*\sqsize) {\scriptsize$R_\mu^{(1)}$};
		\node[right] at (9*\sqsize,5*\sqsize) {\scriptsize$R_\mu^{(2)}$};
		\node[right] at (9*\sqsize,4*\sqsize) {\scriptsize$R_\mu^{(3)}$};
		\node[right] at (9*\sqsize,3*\sqsize) {\scriptsize$R_\mu^{(4)}$};
		\node[right] at (9*\sqsize,1*\sqsize) {\scriptsize$R_\mu^{(5)}$};

		\draw[densely dotted, line width=2.1pt] (0,1.5*\sqsize) --++ (4*\sqsize,0)
		--++(0,\sqsize)--++(\sqsize,0)--++(0,2*\sqsize)--++(\sqsize,0)--++(0,2*\sqsize);
	\end{tikzpicture}
	\caption{Rim-hook decomposition of the part of the square lattice $\Omega\setminus \Omega_\mu$
		into the union of $R_{\mu}^{(k)}$, $k\ge1$. Here 
		Here $\mu=(5,4,1)$. The dotted line indicates the southeast border of another 
		Young diagram, $\lambda=(6,6,5,5,4)$.}
	\label{fig:rim_hook_decomposition}
\end{figure}
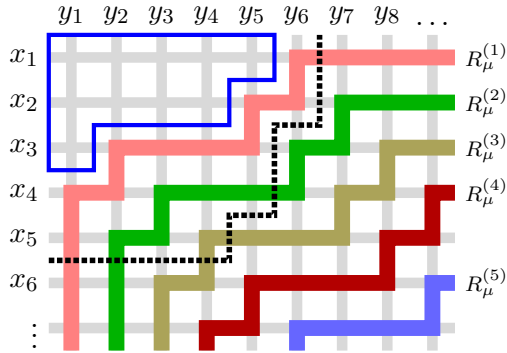

\begin{definition}
	\label{def:mu_boundary_condition}
	For two Young diagrams $\mu\subseteq\lambda$, 
	the \defn{$\mu$-boundary condition} on $\Omega_\lambda$ is a binary string
	$B(\lambda/\mu)$
	of length
	$\lambda_1+\ell(\lambda)$
	which records intersections of of the southeast border 
	of $\lambda$
	with the $\mu$-rim-hooks $R_\mu^{(k)}$, $k\ge1$.
	Namely, when a length $1$ segment of the boundary of $\lambda$
	intersects any $\mu$-rim-hook $R_\mu^{(k)}$,
	we put $1$ in the position of the string $B(\lambda/\mu)$ corresponding to this boundary segment.
	When a $\lambda$-boundary segment does not intersect a $\mu$-rim-hook, we put $0$ in $B(\lambda/\mu)$.
\end{definition}

For example, 
in \Cref{fig:rim_hook_decomposition} the $\mu$-boundary condition is $B(66554/541)=11011010001$.
Note that the diagram $\mu=(5,4,1)$ in \Cref{fig:rim_hook_decomposition} differs
from the inner diagram in \Cref{fig:lambda_5V_conf},
which results in a different binary string $B(66554/332)=11000000011$.

Clearly, the boundary of $\lambda$ intersects each 
rim-hook $R_\mu^{(k)}$ an even number of times.
However, not every binary string of length $\lambda_1+\ell(\lambda)$ 
with an even number of $1$'s
is a valid $\mu$-boundary condition
(see \Cref{prop:delta_symmetric_difference} below for a precise description).

\begin{proposition}
	\label{prop:excited_as_5V}
	For any $\mu\subseteq\lambda$,
	the sum over excited diagrams $\mathsf{Z}_\mu(\lambda)$ 
	\eqref{eq:def_excited_sum} is equal to the partition function of the five-vertex model
	in $\Omega_\lambda$ with the weight $w_{x_i-y_j}$ \eqref{eq:5v_usual_weights}
	at each vertex $(i,j)\in\Omega_\lambda$, boundary conditions $B(\lambda/\mu)$ along the
	southeast border of $\Omega_\lambda$, and 
	empty boundary conditions along its west and north boundaries.
\end{proposition}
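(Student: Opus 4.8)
The plan is to argue combinatorially: reduce to the bijection between excited diagrams and five-vertex path configurations recalled before the statement, identify the relevant southeast boundary condition, and match weights cell by cell. \textbf{Reduction.} In any valid configuration on $\Omega_\lambda$ (up-right paths, empty west and north boundaries) every local weight in \eqref{eq:5v_usual_weights} equals $1$ except the empty vertex $(0,0;0,0)$, whose weight at box $(i,j)$ is $x_i-y_j$; hence a configuration has weight $\prod_{(i,j)\in D}(x_i-y_j)$, where $D$ is its set of empty vertices. Moreover the configuration is recovered from $D$ (the empty west/north boundary makes the incoming edge at each occupied cell unambiguous), so the partition function with a given southeast boundary condition $B$ equals $\sum_D\prod_{(i,j)\in D}(x_i-y_j)$, summed over the sets $D$ arising as empty-vertex sets of valid configurations with boundary $B$. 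Comparing with \eqref{eq:def_excited_sum}, it remains to show that for $B=B(\lambda/\mu)$ this set of $D$'s is exactly $\mathcal{E}(\lambda/\mu)$.

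\textbf{The base configuration.} First I would exhibit the configuration with empty set $D=\mu$ and show its boundary is $B(\lambda/\mu)$. Its occupied cells are $\Omega_\lambda\setminus\Omega_\mu$, the restriction to $\Omega_\lambda$ of the $\mu$-rim-hook tiling $\bigsqcup_{k\ge1}R_\mu^{(k)}$ of $\Omega\setminus\Omega_\mu$. Since $\mu$ is a Young diagram each $R_\mu^{(k)}$ is a monotone ribbon, so each connected component of $R_\mu^{(k)}\cap\Omega_\lambda$ is an up-right lattice path; these components, pairwise disjoint over all $k$, assemble into a valid non-intersecting path configuration whose empty vertices are precisely the cells of $\mu$. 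Such a path enters or exits $\Omega_\lambda$ across a unit segment of the southeast border of $\lambda$ exactly when that segment separates an occupied cell of some $R_\mu^{(k)}$ from a cell of $R_\mu^{(k)}$ lying outside $\lambda$ — that is, exactly at the border segments that meet a $\mu$-rim-hook — which by \Cref{def:mu_boundary_condition} are the positions of the $1$'s in $B(\lambda/\mu)$. Hence the base configuration has boundary condition $B(\lambda/\mu)$.

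\textbf{Spreading over the orbit.} Recall that an elementary excited move on a box of $D$ is exactly the flip $(\mathrm{right},\mathrm{up})\leftrightarrow(\mathrm{up},\mathrm{right})$ of the path configuration, and that this flip preserves validity and the southeast boundary states. Therefore every $D\in\mathcal{E}(\lambda/\mu)$ is realized by a valid configuration with boundary $B(\lambda/\mu)$. Conversely, the valid configurations on $\Omega_\lambda$ with a fixed boundary condition form a single orbit under these flips (a standard fact for free-fermion vertex models; equivalently, one can flip any such configuration to the base one with $D=\mu$, which by the previous step has the same boundary). Hence the empty set of any valid configuration with boundary $B(\lambda/\mu)$ lies in $\mathcal{E}(\lambda/\mu)$, and the correspondence $D\leftrightarrow$ configuration is a bijection. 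Combining this with the weight computation gives $\mathsf{Z}_\mu(\lambda)=\sum_{D\in\mathcal{E}(\lambda/\mu)}\prod_{(i,j)\in D}(x_i-y_j)$, which equals the five-vertex partition function on $\Omega_\lambda$ with weights $w_{x_i-y_j}$ and boundary condition $B(\lambda/\mu)$, as claimed.

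\textbf{Main obstacle.} The crux is the base-configuration step: verifying that the border crossings of the configuration with $D=\mu$ are precisely the positions prescribed by \Cref{def:mu_boundary_condition}. One must be careful that a single rim-hook $R_\mu^{(k)}$ may meet $\Omega_\lambda$ in several disjoint up-right segments, so the crossings are distributed among several path pieces, and check that each crossing is counted exactly once and none is missed. The remaining ingredients — the weight bookkeeping, flip-invariance of the boundary, flip-connectedness of configurations with a fixed boundary, and recovering a configuration from its empty set — are routine or standard.
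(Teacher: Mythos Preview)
Your argument is correct and follows essentially the same route as the paper's own proof: identify the base configuration (empty-vertex set $D=\mu$) with the restriction of the $\mu$-rim-hook tiling to $\Omega_\lambda$, read off that its southeast boundary is $B(\lambda/\mu)$ by definition, and then use that excited moves are exactly flips and that flips act transitively on configurations with a fixed boundary. Your write-up is in fact more detailed than the paper's (which largely defers to the preceding discussion in \Cref{sub:5V_excited_diagrams}), and your flagged ``main obstacle'' about a single rim-hook possibly meeting $\Omega_\lambda$ in several segments is a genuine subtlety that the paper leaves implicit.
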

\begin{proof}
	This statement follows from the discussion above in the
	present \Cref{sub:5V_excited_diagrams}. Indeed,
	observe that the configuration of rim-hooks $R_\mu^{(k)}$
	inside $\Omega_\lambda$ is the same as a distinguished 
	five-vertex model paths configuration. This distinguished
	configurations is minimal in the sense that all empty
	vertices are pushed in the northwest direction. 
	The
	minimal configuration is identified with an initial
	excited diagram $D=\mu\in \mathcal{E}(\lambda/\mu)$.
	A move of a box in an
	excited diagram (\Cref{ss:excited}) corresponds to a
	flip of a path in the five-vertex model. All five-vertex
	model path configurations are obtained from the minimal
	one by a sequence of flips. Thus, the five-vertex model
	partition function is equal to the sum over all $D\in
	\mathcal{E}(\lambda/\mu)$. This completes the proof.
\end{proof}

\begin{remark}
	\label{rmk:connection_to_S4_vertex_models}
	The five-vertex model configurations 
	in 
	\Cref{prop:excited_as_5V} are the same as 
	the nonintersecting lattice 
	path configurations discussed in \Cref{ss:ssyt}
	(and which we enumerated by 
	a determinantal
	formula).
	Note that in the present \Cref{sec:YBE_proof_of_Naruse}, 
	a key role in the analysis of the five-vertex model
	is played by the boundary
	conditions along the southeast border of $\Omega_\lambda$.
	In particular, the dependence of these boundary
	conditions on $\mu$ is crucial for the Pieri rule.
\end{remark}

\subsection{Yang--Baxter moves sweeping a Young diagram}
\label{sub:YBE_on_Young_diagram}

In this subsection, we apply the Yang--Baxter equation
(\Cref{prop:YBE}) to express $\mathsf{Z}_\mu(\lambda)$ as a partition
function in a larger domain with an additional strand of
vertices along its southeast border. 
Throughout this subsection, $t$ is an
auxiliary spectral parameter assumed to be a generic complex number.
First, let us add a
strand to the northwest boundary of $\Omega_\lambda$.

\begin{definition}[Domain $\Omega^{\nwsymbol}_\lambda$]
	\label{def:nw_domain}
	Fix $\mu\subseteq\lambda$, and consider a 
	larger domain 
	$\Omega^{\nwsymbol}_\lambda$ obtained
	by adding a single \defn{new strand} of $\lambda_1+\ell(\lambda)$ vertices
	along the northwest boundary of $\Omega_\lambda$.
	Let the additional vertices $(i,0)$, $1\le i\le \ell(\lambda)$,
	have weights $\check{w}_{x_i-t}$, and $(0,j)$, $1\le j\le \lambda_1$,
	have weights $r_{y_j-t}$. The northwest boundary
	of $\Omega^{\nwsymbol}_\lambda$ 
	and the boundary conditions on the new strand are empty, while the
	southeast border carries the binary string $B(\lambda/\mu)$.
	Inside $\Omega_\lambda$, the weights are $w_{x_i-y_j}$, as before.
	See \Cref{fig:5V_Pieri_additional_strand}, left, for an illustration.
\end{definition}

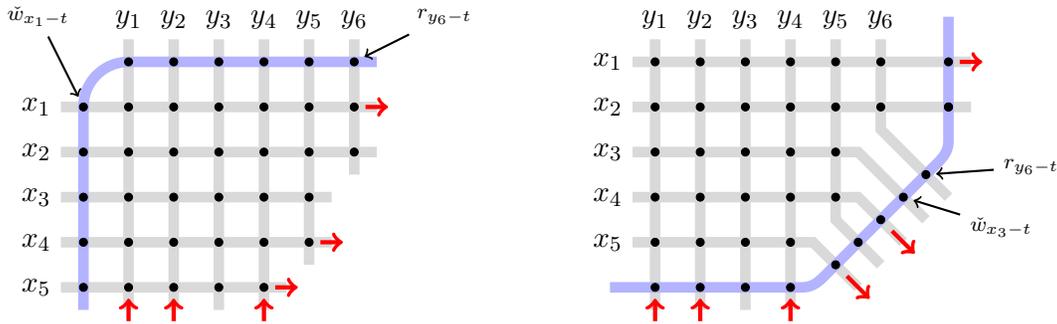
\begin{figure}[htpb]
	\centering
	\begin{tikzpicture}
		\def\sqsize{0.6}
		\def\halfsqsize{0.3}
		\draw[line width=4pt, gray!30]  (0+\halfsqsize,0-\sqsize+\halfsqsize)--++(0,6*\sqsize);
		\draw[line width=4pt, gray!30]  (0+\halfsqsize+\sqsize,0-\sqsize+\halfsqsize)--++(0,6*\sqsize);
		\draw[line width=4pt, gray!30]  (0+\halfsqsize+2*\sqsize,0-\sqsize+\halfsqsize)--++(0,6*\sqsize);
		\draw[line width=4pt, gray!30]  (0+\halfsqsize+3*\sqsize,0-\sqsize+\halfsqsize)--++(0,6*\sqsize);
		\draw[line width=4pt, gray!30]  (0+\halfsqsize+4*\sqsize,0+\halfsqsize)--++(0,5*\sqsize);
		\draw[line width=4pt, gray!30]  (0+\halfsqsize+5*\sqsize,0+2*\sqsize+\halfsqsize)--++(0,3*\sqsize);
		\draw[line width=4pt, gray!30]  (-\sqsize,0)--++(5*\sqsize,0);
		\draw[line width=4pt, gray!30]  (0-\sqsize,0+\sqsize)--++(6*\sqsize,0);
		\draw[line width=4pt, gray!30]  (0-\sqsize,0+2*\sqsize)--++(6*\sqsize,0);
		\draw[line width=4pt, gray!30]  (0-\sqsize,0+3*\sqsize)--++(7*\sqsize,0);
		\draw[line width=4pt, gray!30]  (0-\sqsize,0+4*\sqsize)--++(7*\sqsize,0);
		\draw[ultra thick, red,->] (0+\halfsqsize,0-\sqsize+\halfsqsize/2) --++ (0,\halfsqsize) ;
		\draw[ultra thick, red,->] (0+\halfsqsize+\sqsize,0-\sqsize+\halfsqsize/2) --++ (0,\halfsqsize) ;
		\draw[ultra thick, red,->] (0+\halfsqsize+3*\sqsize,0-\sqsize+\halfsqsize/2) --++ (0,\halfsqsize) ;
		\draw[ultra thick, red,->] (0+4*\sqsize-\halfsqsize/2 ,0+0*\sqsize) --++ (\halfsqsize,0) ;
		\draw[ultra thick, red,->] (0+6*\sqsize-\halfsqsize/2 ,0+4*\sqsize) --++ (\halfsqsize,0) ;
		\draw[ultra thick, red,->] (0+5*\sqsize - \halfsqsize/2,0+1*\sqsize) --++ (\halfsqsize,0) ;
		\node[left] at (-\sqsize,0) {$x_5$};
		\node[left] at (-\sqsize,0+\sqsize) {$x_4$};
		\node[left] at (-\sqsize,0+2*\sqsize) {$x_3$};
		\node[left] at (-\sqsize,0+3*\sqsize) {$x_2$};
		\node[left] at (-\sqsize,0+4*\sqsize) {$x_1$};
		\node[above] at (0+\halfsqsize,0+5.5*\sqsize) {$y_1$};
		\node[above] at (0+\sqsize+\halfsqsize,0+5.5*\sqsize) {$y_2$};
		\node[above] at (0+2*\sqsize+\halfsqsize,0+5.5*\sqsize) {$y_3$};
		\node[above] at (0+3*\sqsize+\halfsqsize,0+5.5*\sqsize) {$y_4$};
		\node[above] at (0+4*\sqsize+\halfsqsize,0+5.5*\sqsize) {$y_5$};
		\node[above] at (0+5*\sqsize+\halfsqsize,0+5.5*\sqsize) {$y_6$};
		\draw[line width=4pt, blue!30] (- \halfsqsize,- \halfsqsize) --++ (0,4.5*\sqsize) 
		to[out=90,in=180] ++(\sqsize,\sqsize) --++ (5.5*\sqsize,0);
		\draw[fill] (0*\sqsize-\halfsqsize, 0*\sqsize) circle [radius=1.5pt] {};
		\draw[fill] (1*\sqsize-\halfsqsize, 0*\sqsize) circle [radius=1.5pt] {};
		\draw[fill] (2*\sqsize-\halfsqsize, 0*\sqsize) circle [radius=1.5pt] {};
		\draw[fill] (3*\sqsize-\halfsqsize, 0*\sqsize) circle [radius=1.5pt] {};
		\draw[fill] (4*\sqsize-\halfsqsize, 0*\sqsize) circle [radius=1.5pt] {};
		\draw[fill] (0*\sqsize-\halfsqsize, 1*\sqsize) circle [radius=1.5pt] {};
		\draw[fill] (1*\sqsize-\halfsqsize, 1*\sqsize) circle [radius=1.5pt] {};
		\draw[fill] (2*\sqsize-\halfsqsize, 1*\sqsize) circle [radius=1.5pt] {};
		\draw[fill] (3*\sqsize-\halfsqsize, 1*\sqsize) circle [radius=1.5pt] {};
		\draw[fill] (4*\sqsize-\halfsqsize, 1*\sqsize) circle [radius=1.5pt] {};
		\draw[fill] (5*\sqsize-\halfsqsize, 1*\sqsize) circle [radius=1.5pt] {};
		\draw[fill] (0*\sqsize-\halfsqsize, 2*\sqsize) circle [radius=1.5pt] {};
		\draw[fill] (1*\sqsize-\halfsqsize, 2*\sqsize) circle [radius=1.5pt] {};
		\draw[fill] (2*\sqsize-\halfsqsize, 2*\sqsize) circle [radius=1.5pt] {};
		\draw[fill] (3*\sqsize-\halfsqsize, 2*\sqsize) circle [radius=1.5pt] {};
		\draw[fill] (4*\sqsize-\halfsqsize, 2*\sqsize) circle [radius=1.5pt] {};
		\draw[fill] (5*\sqsize-\halfsqsize, 2*\sqsize) circle [radius=1.5pt] {};
		\draw[fill] (0*\sqsize-\halfsqsize, 3*\sqsize) circle [radius=1.5pt] {};
		\draw[fill] (1*\sqsize-\halfsqsize, 3*\sqsize) circle [radius=1.5pt] {};
		\draw[fill] (2*\sqsize-\halfsqsize, 3*\sqsize) circle [radius=1.5pt] {};
		\draw[fill] (3*\sqsize-\halfsqsize, 3*\sqsize) circle [radius=1.5pt] {};
		\draw[fill] (4*\sqsize-\halfsqsize, 3*\sqsize) circle [radius=1.5pt] {};
		\draw[fill] (5*\sqsize-\halfsqsize, 3*\sqsize) circle [radius=1.5pt] {};
		\draw[fill] (6*\sqsize-\halfsqsize, 3*\sqsize) circle [radius=1.5pt] {};
		\draw[fill] (0*\sqsize-\halfsqsize, 4*\sqsize) circle [radius=1.5pt] node (whceck) {};
		\node (whceckl) at (-1*\sqsize-\halfsqsize, 6*\sqsize) {\scriptsize$\check{w}_{x_1-t}$};
		\draw[->,thick] (whceckl) -- (whceck);
		\draw[fill] (1*\sqsize-\halfsqsize, 4*\sqsize) circle [radius=1.5pt] {};
		\draw[fill] (2*\sqsize-\halfsqsize, 4*\sqsize) circle [radius=1.5pt] {};
		\draw[fill] (3*\sqsize-\halfsqsize, 4*\sqsize) circle [radius=1.5pt] {};
		\draw[fill] (4*\sqsize-\halfsqsize, 4*\sqsize) circle [radius=1.5pt] {};
		\draw[fill] (5*\sqsize-\halfsqsize, 4*\sqsize) circle [radius=1.5pt] {};
		\draw[fill] (6*\sqsize-\halfsqsize, 4*\sqsize) circle [radius=1.5pt] {};
		\draw[fill] (1*\sqsize-\halfsqsize, 5*\sqsize) circle [radius=1.5pt] {};
		\draw[fill] (2*\sqsize-\halfsqsize, 5*\sqsize) circle [radius=1.5pt] {};
		\draw[fill] (3*\sqsize-\halfsqsize, 5*\sqsize) circle [radius=1.5pt] {};
		\draw[fill] (4*\sqsize-\halfsqsize, 5*\sqsize) circle [radius=1.5pt] {};
		\draw[fill] (5*\sqsize-\halfsqsize, 5*\sqsize) circle [radius=1.5pt] {};
		\draw[fill] (6*\sqsize-\halfsqsize, 5*\sqsize) circle [radius=1.5pt] {} node (r) {};
		\node (rl) at (8*\sqsize-\halfsqsize, 6*\sqsize) {\scriptsize$r_{y_6-t}$};
		\draw[->,thick] (rl) -- (r);

		\begin{scope}[shift={(7,\sqsize)}]
			\draw[line width=4pt, gray!30]  (0+\halfsqsize,0-2*\sqsize+\halfsqsize)--++(0,6*\sqsize);
			\draw[line width=4pt, gray!30]  (0+\halfsqsize+\sqsize,0-2*\sqsize+\halfsqsize)--++(0,6*\sqsize);
			\draw[line width=4pt, gray!30]  (0+\halfsqsize+2*\sqsize,0-2*\sqsize+\halfsqsize)--++(0,6*\sqsize);
			\draw[line width=4pt, gray!30]  (0+\halfsqsize+3*\sqsize,0-2*\sqsize+\halfsqsize)--++(0,6*\sqsize);
			\draw[line width=4pt, gray!30]  (0+\halfsqsize+5*\sqsize,-\sqsize+\halfsqsize)--++(-\sqsize,\sqsize)--++(0,4*\sqsize);
			\draw[line width=4pt, gray!30]  (0+\halfsqsize+6.5*\sqsize,0+.5*\sqsize+\halfsqsize)--++(-\sqsize*1.5,\sqsize*1.5)--++(0,2*\sqsize);
			\draw[line width=4pt, gray!30]  (0,0)--++(4*\sqsize,0)--++(\sqsize,-\sqsize);
			\draw[line width=4pt, gray!30]  (0,0+\sqsize)--++(5*\sqsize,0)--++(\sqsize,-\sqsize);
			\draw[line width=4pt, gray!30]  (0,0+2*\sqsize)--++(5*\sqsize,0)--++(\sqsize*1.5,-\sqsize*1.5);
			\draw[line width=4pt, gray!30]  (0,0+3*\sqsize)--++(7.5*\sqsize,0);
			\draw[line width=4pt, gray!30]  (0,0+4*\sqsize)--++(7.5*\sqsize,0);
			\draw[line width=4pt, blue!30]
			(-\halfsqsize, -\halfsqsize-\sqsize/2)
			-- ++(\sqsize/2, 0)
			-- ++(3.75*\sqsize, 0)
			to[out=0, in=-135] ++ (\sqsize/2,\sqsize/4) --++ (2.5*\sqsize, 2.5*\sqsize)
			to[out=45, in=-90] ++ (\sqsize/4,\sqsize/2) -- (7*\sqsize,4.5*\sqsize)
			--++ (0,\sqsize/2);
			% \draw[line width=4pt, blue!30] (- \halfsqsize,- \halfsqsize-\sqsize)
			% to[out=90,in=180] ++(\sqsize/2,\sqsize/2) --++ (4*\sqsize,0)--++(3*\sqsize,3*\sqsize);
			\node[left] at (-0,0) {$x_5$};
			\node[left] at (-0,0+\sqsize) {$x_4$};
			\node[left] at (-0,0+2*\sqsize) {$x_3$};
			\node[left] at (-0,0+3*\sqsize) {$x_2$};
			\node[left] at (-0,0+4*\sqsize) {$x_1$};
			\node[above] at (0+\halfsqsize,0+4.5*\sqsize) {$y_1$};
			\node[above] at (0+\sqsize+\halfsqsize,0+4.5*\sqsize) {$y_2$};
			\node[above] at (0+2*\sqsize+\halfsqsize,0+4.5*\sqsize) {$y_3$};
			\node[above] at (0+3*\sqsize+\halfsqsize,0+4.5*\sqsize) {$y_4$};
			\node[above] at (0+4*\sqsize+\halfsqsize,0+4.5*\sqsize) {$y_5$};
			\node[above] at (0+5*\sqsize+\halfsqsize,0+4.5*\sqsize) {$y_6$};
			\draw[ultra thick, red,->] (0+\halfsqsize,0-2*\sqsize+\halfsqsize/2) --++ (0,\halfsqsize);
			\draw[ultra thick, red,->] (0+\halfsqsize+\sqsize,0-2*\sqsize+\halfsqsize/2) --++ (0,\halfsqsize);
			\draw[ultra thick, red,->] (0+\halfsqsize+3*\sqsize,0-2*\sqsize+\halfsqsize/2) --++ (0,\halfsqsize);
			\draw[ultra thick, red,->] (0+7.5*\sqsize - \halfsqsize/2,0+4*\sqsize) --++ (\halfsqsize,0) ;
			\draw[ultra thick, red,->] (0+5*\sqsize - \halfsqsize/2,0+-.75*\sqsize) --++ (\halfsqsize,-\halfsqsize);
			\draw[ultra thick, red,->] (0+6*\sqsize - \halfsqsize/2,0+.25*\sqsize) --++ (\halfsqsize,-\halfsqsize);
			\draw[fill] (1*\sqsize-\halfsqsize, 0*\sqsize) circle [radius=1.5pt] {};
			\draw[fill] (2*\sqsize-\halfsqsize, 0*\sqsize) circle [radius=1.5pt] {};
			\draw[fill] (3*\sqsize-\halfsqsize, 0*\sqsize) circle [radius=1.5pt] {};
			\draw[fill] (4*\sqsize-\halfsqsize, 0*\sqsize) circle [radius=1.5pt] {};
			\draw[fill] (1*\sqsize-\halfsqsize, 1*\sqsize) circle [radius=1.5pt] {};
			\draw[fill] (2*\sqsize-\halfsqsize, 1*\sqsize) circle [radius=1.5pt] {};
			\draw[fill] (3*\sqsize-\halfsqsize, 1*\sqsize) circle [radius=1.5pt] {};
			\draw[fill] (4*\sqsize-\halfsqsize, 1*\sqsize) circle [radius=1.5pt] {};
			\draw[fill] (5*\sqsize-\halfsqsize, 1*\sqsize) circle [radius=1.5pt] {};
			\draw[fill] (1*\sqsize-\halfsqsize, 2*\sqsize) circle [radius=1.5pt] {};
			\draw[fill] (2*\sqsize-\halfsqsize, 2*\sqsize) circle [radius=1.5pt] {};
			\draw[fill] (3*\sqsize-\halfsqsize, 2*\sqsize) circle [radius=1.5pt] {};
			\draw[fill] (4*\sqsize-\halfsqsize, 2*\sqsize) circle [radius=1.5pt] {};
			\draw[fill] (5*\sqsize-\halfsqsize, 2*\sqsize) circle [radius=1.5pt] {};
			\draw[fill] (1*\sqsize-\halfsqsize, 3*\sqsize) circle [radius=1.5pt] {};
			\draw[fill] (2*\sqsize-\halfsqsize, 3*\sqsize) circle [radius=1.5pt] {};
			\draw[fill] (3*\sqsize-\halfsqsize, 3*\sqsize) circle [radius=1.5pt] {};
			\draw[fill] (4*\sqsize-\halfsqsize, 3*\sqsize) circle [radius=1.5pt] {};
			\draw[fill] (5*\sqsize-\halfsqsize, 3*\sqsize) circle [radius=1.5pt] {};
			\draw[fill] (6*\sqsize-\halfsqsize, 3*\sqsize) circle [radius=1.5pt] {};
			\draw[fill] (1*\sqsize-\halfsqsize, 4*\sqsize) circle [radius=1.5pt] {};
			\draw[fill] (2*\sqsize-\halfsqsize, 4*\sqsize) circle [radius=1.5pt] {};
			\draw[fill] (3*\sqsize-\halfsqsize, 4*\sqsize) circle [radius=1.5pt] {};
			\draw[fill] (4*\sqsize-\halfsqsize, 4*\sqsize) circle [radius=1.5pt] {};
			\draw[fill] (5*\sqsize-\halfsqsize, 4*\sqsize) circle [radius=1.5pt] {};
			\draw[fill] (6*\sqsize-\halfsqsize, 4*\sqsize) circle [radius=1.5pt] {};
			\draw[fill] (1*\sqsize-\halfsqsize, -1*\sqsize) circle [radius=1.5pt] {};
			\draw[fill] (2*\sqsize-\halfsqsize, -1*\sqsize) circle [radius=1.5pt] {};
			\draw[fill] (3*\sqsize-\halfsqsize, -1*\sqsize) circle [radius=1.5pt] {};
			\draw[fill] (4*\sqsize-\halfsqsize, -1*\sqsize) circle [radius=1.5pt] {};
			\draw[fill] (7.5*\sqsize-\halfsqsize, 4*\sqsize) circle [radius=1.5pt] {};
			\draw[fill] (7.5*\sqsize-\halfsqsize, 3*\sqsize) circle [radius=1.5pt] {};
			\draw[fill] (7.5*\sqsize-\halfsqsize, 3*\sqsize) circle [radius=1.5pt] {};
			\draw[fill] (5*\sqsize-\halfsqsize, -.5*\sqsize) circle [radius=1.5pt] {};
			\draw[fill] (5.5*\sqsize-\halfsqsize, 0*\sqsize) circle [radius=1.5pt] {};
			\draw[fill] (6*\sqsize-\halfsqsize, .5*\sqsize) circle [radius=1.5pt] {};
			\draw[fill] (6.5*\sqsize-\halfsqsize, 1*\sqsize) circle [radius=1.5pt] node (w2c) {};
			\draw[fill] (7*\sqsize-\halfsqsize, 1.5*\sqsize) circle [radius=1.5pt] node (r2) {};
				\node (r2l) at (5.3,1) {\scriptsize$r_{y_6-t}$};
				\draw[->,thick] (r2l) -- (r2);
				\node (w2cl) at (4.9,.2) {\scriptsize$\check{w}_{x_3-t}$};
				\draw[->,thick] (w2cl) -- (w2c);
		\end{scope}
	\end{tikzpicture}
	\caption{The domains
	$\Omega_\lambda^{\usebox{\captionnw}}$ (left) and
	$\Omega_\lambda^{\usebox{\captionse}}$ (right), see
	\Cref{def:nw_domain,def:se_domain}. The partition functions
	in these domains depend on $x_i,y_j$, and $t$. They are
	equal to each other by the Yang--Baxter equation. Here
	$\lambda=(6,6,5,5,4)$, $\mu=(5,4,1)$, and the boundary
	binary string is $B(\lambda/\mu)=11011010001$. For each $1$
	in the binary string, we draw an incoming or an outgoing
	arrow for, respectively, a vertical or a horizontal edge. In
	$\Omega_\lambda^{\usebox{\captionse}}$, we 
	modified the way to draw the
	southeast border (while preserving the same intersections)
	for better visibility.}
	\label{fig:5V_Pieri_additional_strand}
\end{figure}

\begin{lemma}
	\label{lemma:add_YB_vertices}
	The partition function of the vertex model in 
	$\Omega^{\nwsymbol}_\lambda$ 
	is equal to $\mathsf{Z}_\mu(\lambda)$.
\end{lemma}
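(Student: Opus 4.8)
The plan is to show that the extra strand adjoined in $\Omega^{\nwsymbol}_\lambda$ is frozen: every one of its vertices is forced into the empty configuration $(0,0;0,0)$, which carries weight $1$ for both $\check{w}_{x_i-t}$ and $r_{y_j-t}$ by \eqref{eq:5v_dual_weights} and \eqref{eq:r_weights}. Once this is established, the partition function of $\Omega^{\nwsymbol}_\lambda$ equals the partition function of the five-vertex model in $\Omega_\lambda$ with weights $w_{x_i-y_j}$, boundary string $B(\lambda/\mu)$ along the southeast border, and empty west and north boundary, which is $\mathsf{Z}_\mu(\lambda)$ by \Cref{prop:excited_as_5V}.

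To see that the strand is frozen, I would first list the edges through which a path could enter or leave the strand: the two endpoints of the strand and its outer (northwest-facing) edges, which are empty by \Cref{def:nw_domain}; and the edges shared with $\Omega_\lambda$, which are precisely the edges of the former west and north boundary of $\Omega_\lambda$ and hence empty in every configuration counted in \Cref{prop:excited_as_5V}. Since the five-vertex weights conserve paths at each vertex (the condition $i_1+j_1=i_2+j_2$), no path can originate or terminate in the interior of the domain; thus a path meeting any edge of the strand would have to cross one of the empty edges just listed, which is impossible. Therefore all strand edges are empty and each strand vertex is in configuration $(0,0;0,0)$.

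The remaining step is bookkeeping: the trivial strand weights multiply into the partition function, and \Cref{prop:excited_as_5V} identifies the contribution of the part inside $\Omega_\lambda$ with $\mathsf{Z}_\mu(\lambda)$. Note that the partition function of $\Omega^{\nwsymbol}_\lambda$ is then manifestly independent of the auxiliary parameter $t$ — a fact that will become nonobvious once the Yang--Baxter equation is used to slide the new strand across $\Omega_\lambda$ to its southeast boundary, which is precisely why this identity is worth recording on its own. The only point requiring care is confirming that the new strand sits strictly to the northwest of $\Omega_\lambda$, so that the shared edges are indeed constrained to be empty; this is immediate from the definitions, and I do not expect any genuine obstacle in this lemma — the substance of the argument lies in the subsequent Yang--Baxter sweep.
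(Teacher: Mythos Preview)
Your approach is the same as the paper's: show that the extra strand is frozen in the all-empty configuration with weight $1$, then invoke \Cref{prop:excited_as_5V}. One point needs tightening, however. You list the edges shared with $\Omega_\lambda$ among the ``empty edges'' by citing \Cref{prop:excited_as_5V}, but in $\Omega^{\nwsymbol}_\lambda$ these are now \emph{internal} edges to be summed over, so their emptiness is the conclusion, not a premise; as written this step is circular.

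The paper fixes this by propagating the empty condition along the strand using arrow conservation, starting from the strand's endpoints. Concretely, at the bottommost $\check{w}$-vertex $(\ell(\lambda),0)$ both the strand endpoint and the outer left edge are $0$, so conservation forces the top strand edge and the right (shared) edge to be $0$; iterate upward through all $\check{w}$-vertices. Symmetrically, at the rightmost $r$-vertex $(0,\lambda_1)$ the strand endpoint and the outer top edge are $0$, so conservation forces the left strand edge and the bottom (shared) edge to be $0$; iterate leftward through all $r$-vertices. This establishes that every shared edge is empty without appealing to \Cref{prop:excited_as_5V}, after which your final paragraph goes through verbatim.
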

\begin{proof}
	Due to the arrow preservation at each vertex $(i,0)$, $1\le i\le \ell(\lambda)$,
	the empty boundary conditions along the west boundary of $\Omega^{\nwsymbol}_\lambda$
	lead to the empty boundary conditions entering $\Omega_\lambda$. 
	Note that $\check{w}_{x_i-t}(0,0;0,0)=1$,
	so the extra vertices at $(i,0)$ contribute a factor of $1$ to the partition function.
	Similarly, the arrow preservation 
	and the fact that $r_{y_j-t}(0,0;0,0)=1$
	imply that the north boundary of $\Omega_\lambda$
	gets an empty boundary condition, and the extra vertices at $(0,j)$ also contribute a factor of~$1$.
	Thus, the partition function in $\Omega^{\nwsymbol}_\lambda$ 
	reduces to the one in~$\Omega_\lambda$,
	which is equal to $\mathsf{Z}_\mu(\lambda)$.
\end{proof}

The lattice configuration in the extended domain $\Omega^{\nwsymbol}_\lambda$
now allows to apply the Yang--Baxter equation (\Cref{prop:YBE}).
That is, we start 
in $\Omega^{\nwsymbol}_\lambda$ 
at the triangle formed by the 
vertices $(1,1),(0,1)$, and $(1,0)$, and apply the Yang--Baxter equation to move the 
new strand one step in the southeast direction.
Continuing in this way, the strand sweeps the Young diagram $\lambda$,
and in the end it is located below
the southeast border of $\Omega_\lambda$. This results in a new domain
for the vertex model:

\begin{definition}[Domain $\Omega^{\sesymbol}_\lambda$]
	\label{def:se_domain}
	Let $\Omega^{\sesymbol}_\lambda$ 
	be obtained from the domain $\Omega_\lambda$
	by adding one more vertex to each horizontal and vertical edge
	along the southeast border of $\lambda$.
	Let these new vertices be connected by a single \defn{new strand}.
	When the new strand intersects a horizontal edge carrying a spectral parameter $x_i$
	or a vertical edge carrying a spectral parameter $y_j$,
	we assign the weight $\check{w}_{x_i-t}$ or $r_{y_j-t}$, respectively,
	to the new vertex on this edge.
	The southeast border of the new domain $\Omega^{\sesymbol}_\lambda$
	carries the binary string $B(\lambda/\mu)$, while the northwest boundary and the 
	boundary conditions on the new strand are empty.
	The weights inside $\Omega_\lambda$ are $w_{x_i-y_j}$, as before.
	See \Cref{fig:5V_Pieri_additional_strand}, right, for an illustration.
\end{definition}

Combining \Cref{lemma:add_YB_vertices} with the Yang--Baxter equation, we immediately obtain:

\begin{proposition}
	\label{prop:add_YB_vertices_omega_se}
	The partition function of the vertex model in $\Omega^{\sesymbol}_\lambda$
	is equal to $\mathsf{Z}_\mu(\lambda)$.
\end{proposition}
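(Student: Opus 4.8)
The plan is to deduce this from \Cref{lemma:add_YB_vertices} by sweeping the new strand across the Young diagram one vertex at a time via the Yang--Baxter equation (\Cref{prop:YBE}). By \Cref{lemma:add_YB_vertices}, the partition function of the vertex model in $\Omega^{\nwsymbol}_\lambda$ equals $\mathsf{Z}_\mu(\lambda)$. In that domain, view the new strand as an auxiliary line carrying the spectral parameter $t$ (taken generic, so that $x_i\ne t$ for all $i$): it crosses each row line, of parameter $x_i$, exactly once --- at the vertex $(i,0)$ of weight $\check{w}_{x_i-t}$ --- and each column line, of parameter $y_j$, exactly once --- at the vertex $(0,j)$ of weight $r_{y_j-t}$ --- and it lies entirely to the northwest of $\Omega_\lambda$. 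Since the bulk weight $w_{x_i-y_j}$ at $(i,j)\in\Omega_\lambda$ is precisely the $R$-weight for the crossing of the $x_i$-line with the $y_j$-line, the three families $w_{x-y}$, $\check{w}_{x-t}$, $r_{y-t}$ form exactly the triple for which \eqref{eq:5V_YBE_R} holds.

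Now slide the new strand to the southeast. Begin at the triangle formed by the vertices $(1,1),(0,1),(1,0)$: the local configuration there is exactly the left-hand side of \Cref{fig:5V_YBE} with $x=x_1$, $y=y_1$, so applying \eqref{eq:5V_YBE_R} replaces it by the right-hand side, pushing the new strand past the vertex $(1,1)$. Because the two sides of \eqref{eq:5V_YBE_R} agree with all six outer edges fixed and only the three inner edges summed, this local replacement leaves the full partition function unchanged. Process the remaining vertices $(i,j)\in\Omega_\lambda$ in any order compatible with the partial order ``$(i,j)$ before $(i',j')$ if $i\le i'$ and $j\le j'$'' (for instance, by antidiagonals $i+j=2,3,\dots$). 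At every stage the new strand is a monotone up-right lattice path separating the already-crossed vertices from the not-yet-crossed ones, so it again meets the next bulk vertex together with the two adjacent new-strand vertices in precisely the triangle pattern required by \Cref{prop:YBE}, and the sweep is well defined. After all $|\lambda|$ vertices of $\Omega_\lambda$ have been crossed, the new strand lies just below the southeast border of $\lambda$, so the resulting lattice region, with the weights $\check{w}_{x_i-t}$ and $r_{y_j-t}$ now attached to the new vertices placed on the southeast edges, is exactly the domain $\Omega^{\sesymbol}_\lambda$ of \Cref{def:se_domain}. Throughout the sweep the external data are untouched: the southeast border still carries the binary string $B(\lambda/\mu)$, and the northwest boundary and the new strand remain empty. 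Hence the partition function of $\Omega^{\sesymbol}_\lambda$ equals that of $\Omega^{\nwsymbol}_\lambda$, which is $\mathsf{Z}_\mu(\lambda)$.

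The only point requiring care is the bookkeeping of the sweep near the convex and concave corners of the boundary of $\lambda$, where one must check that the new strand always presents to the vertex being crossed the exact local triangle of \Cref{fig:5V_YBE}; this is the standard ``Yang--Baxter train'' argument, and maintaining the monotonicity of the new strand at each step removes any difficulty.
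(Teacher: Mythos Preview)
Your proposal is correct and follows essentially the same approach as the paper: start from \Cref{lemma:add_YB_vertices} and sweep the new strand across $\Omega_\lambda$ one box at a time using the Yang--Baxter equation \eqref{eq:5V_YBE_R}, beginning at the triangle $(1,1),(0,1),(1,0)$. The paper's own proof is a single sentence invoking exactly this, with the bookkeeping you spell out (order of applications, invariance under the local move, final position of the strand) relegated to the surrounding discussion and a remark; your write-up simply makes those details explicit.
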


\begin{remark}
	In $\Omega^{\nwsymbol}_\lambda$, the 
	new strand may be thought of as the boundary
	of an empty Young diagram $\kappa$.
	Each application of the Yang--Baxter equation 
	when passing from 
	$\Omega^{\nwsymbol}_\lambda$ to $\Omega^{\sesymbol}_\lambda$
	may be thought of as adding a box to $\kappa$.
	When $\kappa$ becomes~$\lambda$, the new strand is 
	located below the southeast border of $\Omega_\lambda$.
	Since the Yang--Baxter equation is a local transformation, the order of adding boxes to $\kappa$
	in this growing process is irrelevant.
\end{remark}

\subsection{Boundary binary strings via Maya diagrams}
\label{sub:Maya_diagrams}

For a Young diagram $\lambda$, denote
\begin{equation}
	\label{eq:def_I_lambda}
	I(\lambda)\coloneqq 
	\{-\ell(\lambda),-\ell(\lambda)+1,\ldots,\lambda_1-2,\lambda_1-1\}\subset \mathbb{Z},
	\qquad 
	|I(\lambda)|=\lambda_1+\ell(\lambda).
\end{equation}
Encode the southeast border of $\lambda$ 
via its (zero-charge) \defn{Maya diagram}
\begin{equation}
	\label{eq:def_X_lambda}
	X(\lambda)\coloneqq \bigl\{\lambda_i-i
		\colon 1\le i\le \ell(\lambda)\bigr\}
		\subset I(\lambda).
\end{equation}
The vertical and horizontal edges 
along the southeast border of $\lambda$
correspond, respectively, to the elements of 
$X^c(\lambda)\coloneqq I(\lambda)\setminus X(\lambda)$ 
and 
$X(\lambda)$.
It is well-known that
\begin{equation}
	\label{eq:Xc_lambda}
	X^c(\lambda)=\bigl\{ -\lambda_j'+j-1\colon 1\le j\le \lambda_1 \bigr\},
\end{equation}
where $\lambda'$ is the transposed Young diagram of $\lambda$.

We have $I_{\varnothing}=X(\varnothing)=\varnothing$.
For our 
running
example $\lambda=(6,6,5,5,4)$,
we have 
\begin{equation*}
	I(\lambda)=\{-5,-4,\ldots,4,5 \}
	,\qquad 
	X(\lambda)=\{5,4,2,1,-1\},
	\qquad
	X^c(\lambda)=\{-5,-4,-3,-2,0,3\}.
\end{equation*}

Maya diagrams help demystify the boundary binary string $B(\lambda/\mu)$
from \Cref{def:mu_boundary_condition}:
\begin{proposition}
	\label{prop:delta_symmetric_difference}
	For any $\mu\subseteq\lambda$, we have
	\begin{equation}
		\label{eq:delta_symmetric_difference}
		B(\lambda/\mu)=X(\lambda)\operatorname{\Delta} X(\mu)\subseteq I(\lambda),
	\end{equation}
	where $\operatorname{\Delta}$ denotes the symmetric difference of sets,
	and we interpret the binary string as a subset of $I(\lambda)$.
\end{proposition}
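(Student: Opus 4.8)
The plan is to prove \eqref{eq:delta_symmetric_difference} by a local, ``diagonal by diagonal'' analysis of which length-$1$ segments of the southeast border of $\lambda$ are crossed by a $\mu$-rim-hook. First I would restate \Cref{def:mu_boundary_condition} in a more usable form: a border segment $e$ of $\lambda$ carries a $1$ in $B(\lambda/\mu)$ exactly when the two boxes abutting $e$ --- the box of $\lambda$ on its northwest side and the box of $\Omega\setminus\Omega_\lambda$ on its southeast side --- lie in one and the same $\mu$-rim-hook $R_\mu^{(k)}$. This is because the $R_\mu^{(k)}$ are ribbons tiling $\Omega\setminus\Omega_\mu$, and (reading ``intersects'' as ``the ribbon passes through'') a ribbon meets $e$ precisely when it occupies both boxes abutting $e$, since two boxes of a ribbon that share an edge are automatically consecutive along it. The southeast box of $e$ always lies in some $R_\mu^{(k)}$, so the condition amounts to asking whether the northwest box lies in that same $R_\mu^{(k)}$; in particular the northwest box must be in $\lambda/\mu$.

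The core of the argument is the vertical case. A vertical segment is the right edge of a box $(i,\lambda_i)$; its index in $I(\lambda)$ is $c=\lambda_i-i\in X(\lambda)$, and the two boxes abutting it are $(i,\lambda_i)$ and $(i,\lambda_i+1)$. Using the explicit description $R_\mu^{(k)}\cap\{\text{row }i\}=\{(i,b)\colon \mu_{i-k+1}+k\le b\le \mu_{i-k}+k\}$ (with the convention $\mu_0:=+\infty$), I would show that these two boxes lie in a common $R_\mu^{(k)}$ --- writing $m=i-k$ --- exactly when $\mu_{m+1}\le c+m<\mu_m$. Since $\lambda_i\ge\mu_i$ gives $c\ge\mu_i-i$, any $m$ with $\mu_{m+1}\le c+m<\mu_m$ must satisfy $m<i$, so the admissibility range $1\le k\le i$ is automatic and $m$ may range over all $m\ge 0$. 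Because $(\mu_r-r)_{r\ge0}$ is strictly decreasing from $+\infty$, the integer $c$ satisfies $\mu_{m+1}\le c+m<\mu_m$ for some $m\ge 0$ if and only if $c$ is not a term of this sequence, that is, $c\notin X(\mu)$ (the Maya diagram of $\mu$, read inside the window $I(\lambda)$). Hence a vertical segment of index $c$ is marked $1$ exactly when $c\notin X(\mu)$.

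For a horizontal segment --- the bottom edge of $(\lambda'_j,j)$, with index $c=j-1-\lambda'_j\in X^c(\lambda)$ --- I would run the same computation on the transposed picture, using that the $\mu$-rim-hooks transpose to the $\mu'$-rim-hooks. This yields ``marked $1$ exactly when $\lambda'_j-j\notin X(\mu')$'', which by the particle--hole duality $X^c(\mu')=\{-n-1\colon n\in X(\mu)\}$ together with $c=-(\lambda'_j-j)-1$ is the same as ``marked $1$ exactly when $c\in X(\mu)$''. Combining the two cases, for every $c\in I(\lambda)$ the segment of index $c$ is marked $1$ exactly when precisely one of $c\in X(\lambda)$, $c\in X(\mu)$ holds; this is \eqref{eq:delta_symmetric_difference}.

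I expect the main obstacle to be the bookkeeping in the vertical case: extracting the precise inequalities from the geometry of $R_\mu^{(k)}$ (carefully tracking the convention $\mu_0=+\infty$ and the west/north walls of the quadrant $\Omega$), and checking that the row range $1\le k\le i$ never excludes a solution that $\lambda_i\ge\mu_i$ would otherwise permit. Once the vertical case is settled, the transpose reduction and the identification of $X^c(\mu')$ are routine, as is the (harmless) convention of reading $X(\mu)$ as the window $\{\mu_i-i\colon 1\le i\le \ell(\lambda)\}$ of the Maya diagram of $\mu$ inside $I(\lambda)$.
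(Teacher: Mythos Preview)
Your argument is correct and gives a genuinely different proof from the one in the paper. The paper proceeds by induction on $|\mu|$: it checks the base case $\mu=\varnothing$ (where $B(\lambda/\varnothing)$ comes from the ordinary hook decomposition of $\lambda$) and then shows that adding a single box to $\mu$ changes both $B(\lambda/\mu)$ and $X(\lambda)\operatorname{\Delta} X(\mu)$ in the same local way, splitting into four cases according to which of the two affected positions lie in $X(\lambda)$. Your approach instead computes each bit of $B(\lambda/\mu)$ directly, by locating which $R_\mu^{(k)}$ passes through the two boxes abutting a given border edge and translating that into the Maya-diagram condition; the horizontal case is then obtained from the vertical one by transposition and particle--hole duality. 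The inductive proof is closer in spirit to the box-adding theme of the section and comes with a clean local picture; your direct proof avoids the case split and makes the connection to $X(\mu)$ transparent in one stroke, at the cost of slightly more careful index bookkeeping (the range check $m<i$ and the windowing of $X(\mu)$ inside $I(\lambda)$, both of which you handle). One terminological point to watch if you merge with the paper: what you call a ``vertical segment'' (the right edge of a box, indexed by $\lambda_i-i\in X(\lambda)$) is what the paper calls a \emph{horizontal} edge, and vice versa; the mathematics is unaffected, but the naming should be aligned.
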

\begin{remark}
	\label{rmk:X_mu_subset_I_lambda}
	In \eqref{eq:delta_symmetric_difference}
	and throughout the rest of \Cref{sec:YBE_proof_of_Naruse},
	we slightly abuse the notation 
	by
	appending $\mu\subseteq\lambda$ by zeros, if necessary, 
	such that
	the set
	$X(\mu)=\{\mu_i-i\colon i=1,2,\ldots \}$ is treated a subset of
	$I(\lambda)$.
	Note that $I(\mu)$ may be strictly inside
	$I(\lambda)$, but we never deal with the set $I(\mu)$ of the inner Young diagram $\mu$.

	The number of elements in $X(\lambda)$ is equal to 
	$\ell(\lambda)$. One can check that 
	for any $\mu\subseteq\lambda$,
	the number of elements of $X(\mu)$ 
	(viewed as a subset of $I(\lambda)$)
	is also equal to $\ell(\lambda)$.
\end{remark}
Continuing with our example $\lambda=(6,6,5,5,4)$, $\mu=(5,4,1)$, we have
\begin{equation*}
	X(\mu)=\{4,2,-2,-4,-5\}\subseteq I(\lambda),
	\qquad
	X(\lambda)\operatorname{\Delta} X(\mu)=\{-5, -4, -2, -1, 1, 5\},
\end{equation*}
which agrees with 
$B(\lambda/\mu)=11011010001$.
\begin{proof}[Proof of \Cref{prop:delta_symmetric_difference}]
	Throughout the proof, we treat all equalities between subsets of $\mathbb{Z}$
	as valid only when intersecting with $I(\lambda)=\{-\ell(\lambda),\ldots,\lambda_1-1,\lambda_1\}$
	(but 
	do not explicitly include this intersection in the notation).

	We argue by induction, by adding one box to $\mu$. The base case is $\mu=\varnothing$.
	The binary string $B(\lambda/\varnothing)$ arises from the usual hook decomposition of $\lambda$
	(cf. \Cref{fig:rim_hook_decomposition}).
	One readily sees that 
	\begin{equation*}
		B(\lambda/\varnothing)=\{\lambda_i-i\colon \lambda_i-i\ge 0\}\cup 
		\left\{ i-1-\lambda_i'\colon \lambda_i'-i+1\ge1 \right\}
		=X(\lambda)\operatorname{\Delta} \mathbb{Z}_{\le0},
	\end{equation*}
	as desired.

	\begin{figure}[htpb]
		\centering
		\begin{tikzpicture}
			[scale=1,very thick]

			\begin{scope}[shift={(0,0)}]
				\node at (-.5,1) {(a)};
				\node[anchor=north] at (1,  1.5) {$\mu$};
				\draw[line width=4pt, gray!30]  (.5,-.5)--++(0,1)--++(1,0)--++(0,-1)--cycle;
				\draw[yellow!60!black, line width=7pt] (.5,-.5)--++(0,1)--++(1,0);
				\draw[red!70!black, line width=7pt] (1.5,-.9)--++(0,.4)--++(.4,0);
				\draw[densely dotted, line width=2.1pt] (0,0) --++ (2,0);
				\draw[->,line width=2pt] (2.25,0)--++(.5,0);
				\node at (2.5,-1.5) {$10\to01$};
				\begin{scope}[shift={(3,0)}]
					\node[anchor=north] at (1,1.5) {$\nu$};
					\draw[line width=4pt, gray!30]  (.5,-.5)--++(0,1)--++(1,0)--++(0,-1)--cycle;
					\draw[yellow!60!black, line width=7pt] (.5,-.5)--++(1,0)--++(0,1);
					\draw[green!70!black, line width=7pt] (.1,.5) --++(.4,0)--++(0,.4);
					\draw[densely dotted, line width=2.1pt] (0,0) --++ (2,0);
				\end{scope}
			\end{scope}

			\begin{scope}[shift={(8,0)}]
				\node at (-.5,1) {(b)};
				\node[anchor=north] at (1,  1.5) {$\mu$};
				\draw[line width=4pt, gray!30]  (.5,-.5)--++(0,1)--++(1,0)--++(0,-1)--cycle;
				\draw[yellow!60!black, line width=7pt] (.5,-.5)--++(0,1)--++(1,0);
				\draw[red!70!black, line width=7pt] (1.5,-.9)--++(0,.4)--++(.4,0);
				\draw[densely dotted, line width=2.1pt] (0,0) --++ (1,0)--++(0,1);
	
				\draw[->,line width=2pt] (2.25,0)--++(.5,0);
				\node at (2.5,-1.5) {$11\to00$};
				
				\begin{scope}[shift={(3,0)}]
					\node[anchor=north] at (1,1.5) {$\nu$};
					\draw[line width=4pt, gray!30]  (.5,-.5)--++(0,1)--++(1,0)--++(0,-1)--cycle;
					\draw[yellow!60!black, line width=7pt] (.5,-.5)--++(1,0)--++(0,1);
					\draw[green!70!black, line width=7pt] (.1,.5) --++(.4,0)--++(0,.4);
					\draw[densely dotted, line width=2.1pt] (0,0) --++ (1,0)--++(0,1);
				\end{scope}
			\end{scope}
			
			\begin{scope}[shift={(0,-3.5)}]
				\node at (-.5,1) {(c)};
				\node[anchor=north] at (1,  1.5) {$\mu$};
				\draw[line width=4pt, gray!30]  (.5,-.5)--++(0,1)--++(1,0)--++(0,-1)--cycle;
				\draw[yellow!60!black, line width=7pt] (.5,-.5)--++(0,1)--++(1,0);
				\draw[red!70!black, line width=7pt] (1.5,-.9)--++(0,.4)--++(.4,0);
				\draw[densely dotted, line width=2.1pt] (1,-1) --++ (0,1)--++(1,0);
				
				\draw[->,line width=2pt] (2.25,0)--++(.5,0);
				\node at (2.5,-1.5) {$00\to11$};
				
				\begin{scope}[shift={(3,0)}]
					\node[anchor=north] at (1,1.5) {$\nu$};
					\draw[line width=4pt, gray!30]  (.5,-.5)--++(0,1)--++(1,0)--++(0,-1)--cycle;
					\draw[yellow!60!black, line width=7pt] (.5,-.5)--++(1,0)--++(0,1);
					\draw[green!70!black, line width=7pt] (.1,.5) --++(.4,0)--++(0,.4);
					\draw[densely dotted, line width=2.1pt] (1,-1) --++ (0,1)--++(1,0);
				\end{scope}
			\end{scope}

			\begin{scope}[shift={(8,-3.5)}]
				\node at (-.5,1) {(d)};
				\node[anchor=north] at (1,  1.5) {$\mu$};
				\draw[line width=4pt, gray!30]  (.5,-.5)--++(0,1)--++(1,0)--++(0,-1)--cycle;
				\draw[yellow!60!black, line width=7pt] (.5,-.5)--++(0,1)--++(1,0);
				\draw[red!70!black, line width=7pt] (1.5,-.9)--++(0,.4)--++(.4,0);
				\draw[densely dotted, line width=2.1pt] (1,-1) --++ (0,2);
				
				\draw[->,line width=2pt] (2.25,0)--++(.5,0);
				\node at (2.5,-1.5) {$01\to10$};				
				
				\begin{scope}[shift={(3,0)}]
					\node[anchor=north] at (1,1.5) {$\nu$};
					\draw[line width=4pt, gray!30]  (.5,-.5)--++(0,1)--++(1,0)--++(0,-1)--cycle;
					\draw[yellow!60!black, line width=7pt] (.5,-.5)--++(1,0)--++(0,1);
					\draw[green!70!black, line width=7pt] (.1,.5) --++(.4,0)--++(0,.4);
					\draw[densely dotted, line width=2.1pt] (1,-1) --++ (0,2);
				\end{scope}
			\end{scope}
		\end{tikzpicture}
		\caption{Four cases of adding a box $\nu=\mu+\square$
			in the proof of \Cref{prop:delta_symmetric_difference}.
		The dashed line is the southeast boundary of $\lambda$, and thick lines are $\mu$- or $\nu$-rim-hooks.
		Below each case, we indicate the local change in the boundary binary
		string, $B(\lambda/\mu)\to B(\lambda/\nu)$.}
		\label{fig:delta_symmetric_difference_proof}
	\end{figure}
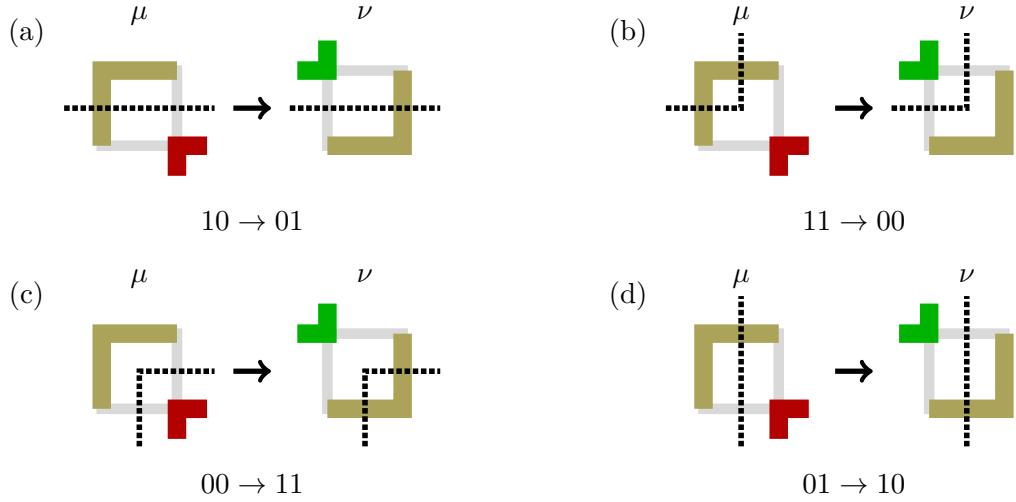

	Now let $\mu,\nu\subseteq \lambda$ are such that $\nu=\mu+\square$.
	In terms of Maya diagrams, this means that for some $k$, 
	\begin{equation}
		\label{eq:delta_symmetric_difference_proof}
		k\in X(\mu), \quad k+1\notin X(\mu), \qquad 
		X(\nu)=\left( X(\mu)\cup \left\{ k+1 \right\} \right)\setminus \left\{ k \right\}.
	\end{equation}
	There are four cases depending on whether $k$ and $k+1$ belong to $X(\lambda)$.
	They are	
	illustrated by local pictures in \Cref{fig:delta_symmetric_difference_proof}
	(an example of a 
	global rim-hook configuration is in \Cref{fig:rim_hook_decomposition}).
	The four cases correspond to four possible directions of the southeast border of $\lambda$
	through $k$ and $k+1$. Indeed, 
	in (a) we have $k,k+1\notin X(\lambda)$, and the boundary goes horizontally. 
	The other cases are (b) $k\notin X(\lambda)$, $k+1\in X(\lambda)$;
	(c) $k\in X(\lambda)$, $k+1\notin X(\lambda)$; and (d) $k,k+1\in X(\lambda)$.
	
	From the induction assumption, it follows 
	that the configuration of $\mu$-rim-hooks 
	around the part of the southeast border of $\lambda$
	through $k$ and $k+1$ is the same in all four cases.
	Adding a box to $\mu$ changes the $\mu$-rim-hook configuration
	to $\nu$-rim-hooks
	in the same way in all cases,
	which results in the corresponding change of the boundary binary string 
	$B(\lambda/\mu)\to B(\lambda/\nu)$.
	This completes the proof.
\end{proof}

\subsection{Vertical strip expansion of the five-vertex partition function}
\label{subsec:5V_expansion}

\begin{definition}
	\label{def:strand_transfer_matrix}
	Fix a Young diagram $\lambda$.
	Let us define a \defn{transfer matrix}
	$\mathscr{R}_\lambda^t$
	which depends on the 
	spectral parameter~$t$ (and also on $x_i,y_j$, but we suppress this in the notation),
	and has rows and columns indexed by Young diagrams $\mu,\nu\subseteq\lambda$.
	The value 
	$\mathscr{R}^t_\lambda(\mu,\nu)$ is
	a partition function of a single-row vertex model 
	whose 
	vertices are indexed by $I(\lambda)$
	\eqref{eq:def_I_lambda}.
	The vertex weight at each $k\in I(\lambda)$ has the form
	\begin{equation}
		\label{eq:vertex_weights_for_R_transfer_matrix}
		\begin{cases}
			r_{x_i-t}, & k= \lambda_i-i\in X(\lambda)
			;\\
			r_{y_j-t}, & k= -\lambda_j'+j-1\in X^c(\lambda).
		\end{cases}
	\end{equation}
	The boundary conditions on the left and right of the row are empty,
	and 
	boundary conditions on the top and bottom
	are given by $X(\mu)$ and $X(\nu)$ (viewed as subsets of $I(\lambda)$), respectively.

	\begin{remark}
		\label{rmk:how_to_assign_spectral_parameters}
		The choice of a spectral parameter $x_i-t$ or $y_j-t$ at a point in $k\in I(\lambda)$
		can be uniformly written as
		\begin{equation*}
			\mathrm{parameter}(k)\coloneqq 
			x_{|X(\lambda)\operatorname{\cap} \mathbb{Z}_{\ge k}|}\mathbf{1}_{k\in X(\lambda)}
			+
			y_{|X^c(\lambda)\operatorname{\cap} \mathbb{Z}_{\le k}|} \mathbf{1}_{k\in X^c(\lambda)}
			-t.
		\end{equation*}
	\end{remark}
	
	Clearly, for each $\mu,\nu$, there is at most one path configuration
	with these boundary conditions.
	If there are no path configurations,
	we set $\mathscr{R}^t_\lambda(\mu,\nu)=0$, and otherwise we let $\mathscr{R}^t_\lambda(\mu,\nu)$
	to be the product of the weights of all vertices along $I(\lambda)$.
	See \Cref{fig:5V_Pieri_transfer_matrix} for an illustration.
\end{definition}

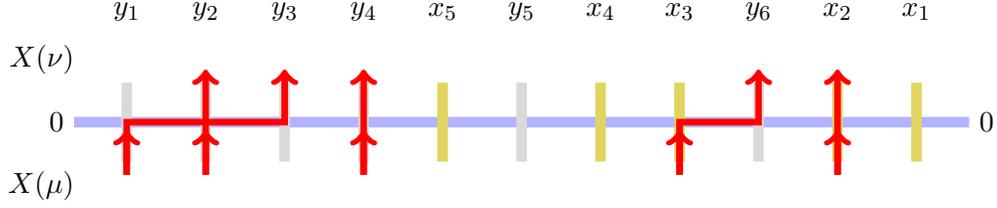
\begin{figure}[htpb]
	\centering
	\begin{tikzpicture}
		[scale=0.7]
		\draw[line width=4pt, blue!30] (-1,0)--(16,0);
		\foreach \ii in {0,1,2,3,5,8}
		{
			\draw[line width=4pt, gray!30] (\ii*1.5,-.75)--++(0,1.5);
		}
		\foreach \ii in {4,6,7,9,10}
		{
			\draw[line width=4pt, gray!30!yellow] (\ii*1.5,.75)--++(0,-1.5);
		}
		\node[left] at (-1,0) {0};
		\node[right] at (16,0) {0};
		\node at (0,2.1) {$y_1$};
		\node at (1.5,2.1) {$y_2$};
		\node at (3,2.1) {$y_3$};
		\node at (4.5,2.1) {$y_4$};
		\node at (6,2.1) {$x_5$};
		\node at (7.5,2.1) {$y_5$};
		\node at (9,2.1) {$x_4$};
		\node at (10.5,2.1) {$x_3$};
		\node at (12,2.1) {$y_6$};
		\node at (13.5,2.1) {$x_2$};
		\node at (15,2.1) {$x_1$};
		\node[left] at (-.8,-1.2) {$X(\mu)$}; % \mu=(5,4,1,1); X(\mu)=(4,2,-2,-3,-4)
		\node[left] at (-.8,1.2) {$X(\nu)$}; % \nu=(5,5,2,1); X(\nu)=(4,3,-2,-3,-5)
		\foreach \ii in {4,2,-2,-4,-5}
		{
			\draw[->,red,line width=2.5pt] (5*1.5+\ii*1.5,-1)--++(0,.85);
		}
		\draw[->,red,line width=2.5pt] (0*1.5,-1)--++(0,1)--++(1.5,0)--++(0,1);
		\draw[->,red,line width=2.5pt] (1*1.5,-1)--++(0,1)--++(1.5,0)--++(0,1);
		\draw[->,red,line width=2.5pt] (3*1.5,-1)--++(0,2);
		\draw[->,red,line width=2.5pt] (7*1.5,-1)--++(0,1)--++(1.5,0)--++(0,1);
		\draw[->,red,line width=2.5pt] (9*1.5,-1)--++(0,2);
	\end{tikzpicture}
	\caption{The one-row partition function for
	$\mathscr{R}^t_\lambda(\mu,\nu)$ with $\lambda=(6,6,5,5,4)$, $\mu=(5,4,1)$, and $\nu=(5,5,1,1,1)$.
	The different colors of the vertical edges correspond to the different spectral parameters $x_i-t$ 
	or $y_j-t$ in the vertex weights, see \eqref{eq:vertex_weights_for_R_transfer_matrix}.
	The sequence of spectral parameters depends only on $\lambda$.}
	\label{fig:5V_Pieri_transfer_matrix}
\end{figure}

Recall that a vertical strip is a skew Young diagram which has at most one
box in each row.
We have
the following expansion of the 
five-vertex model
partition functions 
$\mathsf{Z}_\mu(\lambda)$:
\begin{proposition}
	\label{prop:5V_Pieri_expansion}
	For any $\mu\subseteq\lambda$,
	we have
	\begin{equation}
		\label{eq:5V_Pieri_expansion}
		\mathsf{Z}_\mu(\lambda)=
		\frac{1}{(x_1-t)\ldots(x_{\ell(\lambda)}-t) }
		\sum_{\substack{\nu\subseteq\lambda\\ \nu=\mu+\textnormal{vertical strip}}}
		\mathscr{R}^t_\lambda(\mu,\nu)\ssp
		\mathsf{Z}_\nu(\lambda),
	\end{equation}
	where $\mathscr{R}^t_\lambda$ is the transfer matrix from \Cref{def:strand_transfer_matrix}.
	The vertical strip in \eqref{eq:5V_Pieri_expansion} 
	can be empty.
\end{proposition}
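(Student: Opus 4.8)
The plan is to evaluate the partition function of the vertex model on the extended domain $\Omega^{\sesymbol}_\lambda$ in two ways, by ``cutting'' it along the original southeast border of $\Omega_\lambda$. By \Cref{prop:add_YB_vertices_omega_se}, this partition function equals $\mathsf{Z}_\mu(\lambda)$. On the other hand, summing first over the path configuration along the interface -- the original southeast border of $\Omega_\lambda$, recorded by a binary string $B'$ -- the contribution of $\Omega_\lambda$ itself with this boundary condition is, by \Cref{prop:excited_as_5V}, the five-vertex partition function in $\Omega_\lambda$ with southeast boundary $B'$; by \Cref{prop:delta_symmetric_difference} this vanishes unless $B'=B(\lambda/\nu)=X(\lambda)\operatorname{\Delta}X(\nu)$ for some $\nu\subseteq\lambda$, in which case it equals $\mathsf{Z}_\nu(\lambda)$. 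Hence
\begin{equation*}
	\mathsf{Z}_\mu(\lambda)=\sum_{\nu\subseteq\lambda}\mathsf{W}^{t}_\lambda(\mu,\nu)\,\mathsf{Z}_\nu(\lambda),
\end{equation*}
where $\mathsf{W}^{t}_\lambda(\mu,\nu)$ is the partition function of the single bent strand of \Cref{def:se_domain}: a one-row vertex model whose vertices carry the weights $\check{w}_{x_i-t}$ on the horizontal border edges (indexed by $X(\lambda)$) and $r_{y_j-t}$ on the vertical border edges (indexed by $X^c(\lambda)$), with empty boundary conditions at the two ends of the strand, the side facing $\Omega_\lambda$ carrying $B(\lambda/\nu)$, and the side facing away carrying $B(\lambda/\mu)$.

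It remains to identify $\mathsf{W}^{t}_\lambda(\mu,\nu)$ with $\frac{1}{(x_1-t)\cdots(x_{\ell(\lambda)}-t)}\,\mathscr{R}^{t}_\lambda(\mu,\nu)$. Both the bent strand and the straightened one-row model $\mathscr{R}^{t}_\lambda$ of \Cref{def:strand_transfer_matrix} admit at most one path configuration once their two long boundaries are fixed, and I would exhibit an explicit bijection between these configurations. Straightening the staircase-shaped strand turns the transverse pair of edges at each $X(\lambda)$-position into the vertical pair of the one-row model while complementing its two occupations; since $B(\lambda/\cdot)=X(\lambda)\operatorname{\Delta}X(\cdot)$ by \Cref{prop:delta_symmetric_difference}, this complementation converts the boundary strings $B(\lambda/\nu)$, $B(\lambda/\mu)$ into $X(\nu)$, $X(\mu)$. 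Comparing weights, the $X^c(\lambda)$-vertices lie on straight portions of the strand and are unchanged (weight $r_{y_j-t}$), whereas each $X(\lambda)$-vertex carries $\frac{1}{x_i-t}$ times the weight of the matching $r_{x_i-t}$-vertex of $\mathscr{R}^{t}_\lambda$; as $\{x_i-t:1\le i\le\ell(\lambda)\}$ runs exactly over the $X(\lambda)$-vertices, collecting these factors produces the prefactor. Finally, $\mathscr{R}^{t}_\lambda(\mu,\nu)$ vanishes unless $\nu=\mu+\textnormal{vertical strip}$ with $\nu\subseteq\lambda$: propagating the horizontal occupation from the empty left end of the row, at position $k$ it equals $|X(\mu)\cap\mathbb{Z}_{\le k}|-|X(\nu)\cap\mathbb{Z}_{\le k}|$, and the requirement that this lie in $\{0,1\}$ while avoiding the zero-weight configuration $(0,1;0,1)$ of the $r$-weights is precisely the statement that each particle of $X(\mu)$ moves to the right by at most one, i.e. that $\nu/\mu$ is a vertical strip. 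This confines the sum in \eqref{eq:5V_Pieri_expansion} to the stated range.

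The hard part will be the bijection and weight-matching in the second step. A naive ``complement the transverse legs'' map does not respect the turn configurations of $\check{w}$ vertex by vertex; what saves it is that bending the strand around the corners of $\lambda$ reinterprets which transverse leg is incoming and which is outgoing, and only after combining this with the $B$-versus-$X$ complementation does the weight ratio at each $X(\lambda)$-vertex collapse to the single factor $\frac{1}{x_i-t}$. Making this bookkeeping precise -- and verifying that no path configuration is lost or created -- is the crux; the remaining steps follow routinely from \Cref{prop:add_YB_vertices_omega_se}, \Cref{prop:excited_as_5V}, and \Cref{prop:delta_symmetric_difference}.
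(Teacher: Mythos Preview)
Your proposal is correct and follows essentially the same route as the paper: start from \Cref{prop:add_YB_vertices_omega_se}, peel off the extra strand to obtain $\mathsf{Z}_\mu(\lambda)=\sum_\nu \mathscr{T}^t_\lambda(\mu,\nu)\,\mathsf{Z}_\nu(\lambda)$, then identify $\mathscr{T}^t_\lambda(\mu,\nu)=\mathscr{R}^t_\lambda(\mu,\nu)/\prod_i(x_i-t)$ by complementing the transverse edges at the $X(\lambda)$-positions, and finally restrict to vertical strips via $r_z(0,1;0,1)=0$. One small comment: the ``hard part'' you flag is in fact a clean pointwise identity $\check{w}_{x_i-t}(i_1,j_1;i_2,j_2)=(x_i-t)^{-1}\,r_{x_i-t}(1-i_2,j_1;1-i_1,j_2)$, verifiable directly from \eqref{eq:5v_dual_weights} and \eqref{eq:r_weights}; no corner bookkeeping is required, since the complementation is local to each $X(\lambda)$-vertex and automatically converts $B(\lambda/\cdot)$ to $X(\cdot)$ by \Cref{prop:delta_symmetric_difference}.
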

\begin{proof}[Proof of \Cref{prop:5V_Pieri_expansion}]
	We start from
	\Cref{prop:add_YB_vertices_omega_se}
	which states that 
	$\mathsf{Z}_\mu(\lambda)$ is the partition function
	of the vertex model in the domain $\Omega_\lambda^{\sesymbol}$
	(see \Cref{def:se_domain}),
	with the boundary conditions $B(\lambda/\mu)$ along the extra new strand
	of vertices carrying the weights 
	$r_{y_j-t}$ and $\check{w}_{x_i-t}$
	(see \Cref{fig:5V_Pieri_additional_strand}, right).
	Peeling off this extra strand
	and summing over the 
	binary strings between the strand and the southeast border of~$\lambda$,
	we immediately get the following expansion:
	\begin{equation}
		\label{eq:5V_Pieri_expansion_proof}
		\mathsf{Z}_\mu(\lambda)=
		\sum_{\nu\subseteq\lambda}
		\mathscr{T}_\lambda^t(\mu,\nu)
		\ssp
		\mathsf{Z}_\nu(\lambda).
	\end{equation}
	Indeed, $\mathsf{Z}_\nu(\lambda)$ is the partition function
	of the five-vertex model in $\Omega_\lambda$ with some boundary conditions.
	The coefficients 
	$\mathscr{T}_{\lambda}^t(\mu,\nu)$ are determined from one-row partition functions
	with the following data:
		\begin{enumerate}[$\bullet$]
		\item The vertices on the row are indexed by $I(\lambda)$.
		\item At each $\lambda_i-i\in X(\lambda)$, we put the \defn{reversed} weight $\check{w}_{x_i-t}$.
			Namely, paths at this vertex are oriented \defn{down and right}.
		\item At each $-\lambda_j'+j-1\in X^c(\lambda)$, we put the \defn{usual} weight $r_{y_j-t}$,
			with the \defn{up and right} path orientation.
		\item The boundary conditions on the left and right of the row are empty.
		\item The boundary conditions on the top and bottom of the row 
			are given by the binary strings $B(\lambda/\mu)$ and $B(\lambda/\nu)$, respectively.
	\end{enumerate}
	See \Cref{fig:5V_Pieri_transfer_matrix_T_matrix} for an illustration.
	
	\begin{figure}[htpb]
		\centering
		\begin{tikzpicture}
			[scale=0.7]
			\draw[line width=4pt, blue!30,->] (-1,0)--(16,0);
			\foreach \ii in {0,1,2,3,5,8}
			{
				\draw[line width=4pt, gray!30,->] (\ii*1.5,-.75)--++(0,1.5);
				\draw[fill] (\ii*1.5,0) circle [radius=2.5pt] {};
			}
			\foreach \ii in {4,6,7,9,10}
			{
				\draw[line width=4pt, gray!30,->] (\ii*1.5,.75)--++(0,-1.5);
				\draw[fill] (\ii*1.5,0) circle [radius=2.5pt] {};
			}
			\node[left] at (-1,0) {0};
			\node[right] at (16,0) {0};
			\node at (0,2.4) {$y_1$};
			\node at (1.5,2.4) {$y_2$};
			\node at (3,2.4) {$y_3$};
			\node at (4.5,2.4) {$y_4$};
			\node at (6,2.4) {$x_5$};
			\node at (7.5,2.4) {$y_5$};
			\node at (9,2.4) {$x_4$};
			\node at (10.5,2.4) {$x_3$};
			\node at (12,2.4) {$y_6$};
			\node at (13.5,2.4) {$x_2$};
			\node at (15,2.4) {$x_1$};
			\node at (0,-2.4) {$r$};
			\node at (1.5,-2.4) {$r$};
			\node at (3,-2.4) {$r$};
			\node at (4.5,-2.4) {$r$};
			\node at (6,-2.4) {$\check{w}$};
			\node at (7.5,-2.4) {$r$};
			\node at (9,-2.4) {$\check{w}$};
			\node at (10.5,-2.4) {$\check{w}$};
			\node at (12,-2.4) {$r$};
			\node at (13.5,-2.4) {$\check{w}$};
			\node at (15,-2.4) {$\check{w}$};
			\node[left] at (-.8,-1.2) {$B(\lambda/\mu)$}; % \mu=(5,4,1)
			\node[left] at (-.8,1.2) {$B(\lambda/\nu)$}; % \nu=(6,5,3)
			\foreach \ii in {0,1,3,4,6,10}
			{
				\node at (\ii*1.5,-1.1) {$1$};
			}
			\foreach \ii in {2,5,7,8,9}
			{
				\node at (\ii*1.5,-1.1) {$0$};
			}
			\foreach \ii in {1,2,3,4,6,7,8,10}
			{
				\node at (\ii*1.5,1.1) {$1$};
			}
			\foreach \ii in {0,5,9}
			{
				\node at (\ii*1.5,1.1) {$0$};
			}
		\end{tikzpicture}
		\caption{The one-row partition function for 
			the coefficients $\mathscr{T}_\lambda^t(\mu,\nu)$ 
			in \eqref{eq:5V_Pieri_expansion_proof}
			with the same $\lambda,\mu,\nu$ as in
			\Cref{fig:5V_Pieri_transfer_matrix}.
			The up and down arrows indicate the orientation of the vertical paths at the vertices.
			Note that
			the horizontal paths are always oriented to the right.
			Zeroes and ones indicate the boundary conditions $B(\lambda/\mu)$ and $B(\lambda/\nu)$.}
		\label{fig:5V_Pieri_transfer_matrix_T_matrix}
	\end{figure}
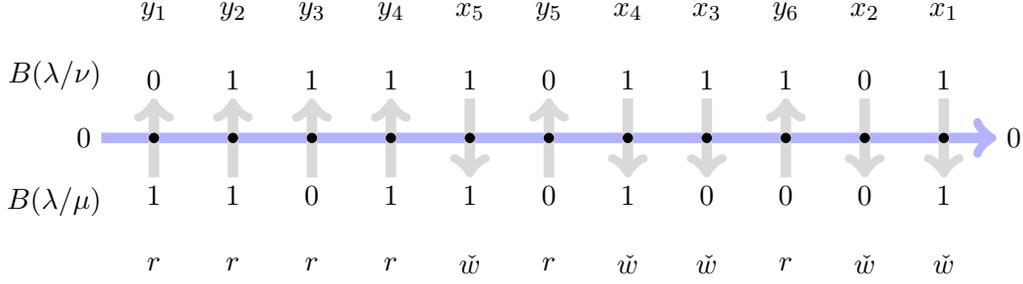

	In the partition function $\mathscr{T}_\lambda^t(\mu,\nu)$,
	we now \defn{reverse} the orientation of all vertical edges 
	carrying the weights $\check{w}_{x_i-t}$.
	We obtain new weights which have the form
	\begin{equation*}
		\check{w}_{x_i-t}
		\Bigl( 
				\begin{tikzpicture}[baseline=-3,scale=.7,very thick]
				\draw[line width=4pt, gray!30,->] (0,.75)--++(0,-1.5);
				\draw[line width=4pt, blue!30,->] (-1,0)--++(2,0);
				\draw[fill] (0,0) circle [radius=2.5pt] {};
				\node[left] at (0,.75) {\scriptsize$i_1$};
				\node[right] at (0,-.75) {\scriptsize$i_2$};
				\node[below] at (-.8,0) {\scriptsize$j_1$};
				\node[above] at (.8,.2) {\scriptsize$j_2$};
		\end{tikzpicture}
		\Bigr)
		=\frac{r_{x_i-t}(1-i_2,j_1;1-i_1,j_2)}{x_i-t},\qquad 
		i_1,j_1,i_2,j_2\in\left\{ 0,1 \right\},
		\quad i=1,2,\ldots,\ell(\lambda).
	\end{equation*}
	By 
	\Cref{prop:delta_symmetric_difference},
	this reversal modifies the bottom and top boundary
	conditions
	to 
	\begin{equation*}
		B(\lambda/\mu)\operatorname{\Delta} X(\lambda)=X(\mu),
		\qquad 
		B(\lambda/\nu)\operatorname{\Delta} X(\lambda)=X(\nu).
	\end{equation*}
	Thus, we conclude that 
	\begin{equation*}
		\mathscr{T}_\lambda^t(\mu,\nu)=
		\frac{\mathscr{R}^t_\lambda(\mu,\nu)}{(x_1-t)\ldots(x_{\ell(\lambda)}-t) }
		.
	\end{equation*}
	
	It remains to show that the sum over 
	$\nu\subseteq\lambda$ in \eqref{eq:5V_Pieri_expansion_proof}
	is restricted to $\nu$ obtained from $\mu$ by
	adding a vertical strip.
	This follows from the fact that 
	$r_z(0,1;0,1)=0$, 
	which implies that horizontal paths in \Cref{fig:5V_Pieri_transfer_matrix}
	cannot travel by more than one horizontal step. 
	This restriction implies that 
	$\mathscr{R}^t_\lambda(\mu,\nu)$
	vanishes unless $\nu=\mu+\text{vertical strip}$,
	and so we are done.
\end{proof}

\subsection{Pieri rule and proof \Cref{thm:general}}
\label{subsec:5V_Pieri}

We are now ready to establish the Pieri rule for the five-vertex partition functions
$\mathsf{Z}_\mu(\lambda)$ \eqref{eq:def_excited_sum}.
Together with vanishing (\Cref{rmk:excited_diagrams_vanishing}),
the 
general approach of \Cref{sub:general_formalism_general} then guarantees that 
$\mathsf{Z}_\mu(\lambda)$ is expressed as a 
sum over skew standard Young tableaux of shape $\lambda/\mu$.
This would complete the proof of the multivariate hook-length formula 
\eqref{eq:general2}.

\begin{definition}
	\label{def:projection_of_mu_onto_lambda}
	Let $\mu\subseteq\lambda$ be two Young diagrams.
	Define 
	\begin{equation}
		\label{eq:pieri_p_mu_def}
		\mathsf{p}_\mu(\lambda)\coloneqq
		\sum_{k\in X^c(\mu)\cap X(\lambda)} x_{|X(\lambda)\operatorname{\cap} \mathbb{Z}_{\ge k}|}
		-
		\sum_{k\in X(\mu)\cap X^c(\lambda)} y_{|X^c(\lambda)\operatorname{\cap} \mathbb{Z}_{\le k}|}
		.
	\end{equation}
\end{definition}
For example,
for $\lambda=(6,6,5,5,4)$ and $\mu=(5,4,1)$, we have
\begin{equation*}
	\mathsf{p}_\mu(\lambda)=
	(x_1+x_4+x_5)
	-
	(y_1+y_2+y_4)
	.
\end{equation*}

\begin{proposition}[Pieri rule for five-vertex partition functions]
	\label{prop:Pieri_rule}
	Let $\mu\subseteq\lambda$
	be two Young diagrams.
	Then we have
	\begin{equation*}
		\mathsf{p}_\mu(\lambda)\ssp
		\mathsf{Z}_\mu(\lambda)=
		\sum_{\substack{\nu\subseteq\lambda\\ \nu=\mu+\square}}
		\mathsf{Z}_\nu(\lambda).
	\end{equation*}
\end{proposition}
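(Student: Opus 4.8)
The plan is to read the Pieri rule off the vertical‑strip expansion of \Cref{prop:5V_Pieri_expansion} by comparing coefficients of powers of the auxiliary spectral parameter $t$. First I would clear the denominator in \eqref{eq:5V_Pieri_expansion} and view it as a polynomial identity in $t$,
\begin{equation*}
	(x_1-t)\cdots(x_{\ell(\lambda)}-t)\,\mathsf{Z}_\mu(\lambda)
	=\sum_{\substack{\nu\subseteq\lambda\\ \nu=\mu+\textnormal{vertical strip}}}
	\mathscr{R}^t_\lambda(\mu,\nu)\,\mathsf{Z}_\nu(\lambda),
\end{equation*}
which is legitimate because both sides are polynomials in $t$ agreeing for all generic values. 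Since $\mathsf{Z}_\nu(\lambda)$ does not depend on $t$, the $t$-degree of the $\nu$-summand on the right equals the number of vertices of the one-row model for $\mathscr{R}^t_\lambda(\mu,\nu)$ (\Cref{fig:5V_Pieri_transfer_matrix}) that carry the through-weight $r_z(1,0;1,0)=z$, every other admissible $r$-weight being $1$.

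The key combinatorial input is a degree count for these one-row partition functions. Write $A=X(\mu)$ and $B=X(\nu)$ for the bottom and top boundary subsets of $I(\lambda)$ (using the convention of \Cref{rmk:X_mu_subset_I_lambda}), and let $\mathrm{par}(k)$ be the spectral parameter at $k$ with $t$ removed, i.e. $\mathrm{par}(k)=x_{|X(\lambda)\cap\mathbb{Z}_{\geq k}|}$ if $k\in X(\lambda)$ and $\mathrm{par}(k)=y_{|X^c(\lambda)\cap\mathbb{Z}_{\leq k}|}$ if $k\in X^c(\lambda)$ (cf. \Cref{rmk:how_to_assign_spectral_parameters}). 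A vertex $k$ carries a through-weight precisely when $k\in A\cap B$ and no horizontal path passes through it; since $r_z(0,1;0,1)=0$, horizontal segments can only be created at vertices of $A\setminus B$ (turn $r_z(1,0;0,1)$), absorbed at vertices of $B\setminus A$ (turn $r_z(0,1;1,0)$), and cross a vertex of $A\cap B$ only via $r_z(1,1;1,1)$. Hence the $t$-degree of the $\nu$-summand is at most $|A\cap B|=\ell(\lambda)-|\nu/\mu|$. I would single out three cases: for $\nu=\mu$ there is no horizontal path, so $\mathscr{R}^t_\lambda(\mu,\mu)=\prod_{k\in X(\mu)}(\mathrm{par}(k)-t)$ has degree exactly $\ell(\lambda)$; for $\nu=\mu+\square$ the only horizontal path is a single step, so $\mathscr{R}^t_\lambda(\mu,\mu+\square)=\prod_{k\in X(\mu)\cap X(\nu)}(\mathrm{par}(k)-t)$ has degree exactly $\ell(\lambda)-1$ with leading coefficient $(-1)^{\ell(\lambda)-1}$; and for $|\nu/\mu|\geq2$ the degree is at most $\ell(\lambda)-2$.

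Next I would compare coefficients of powers of $t$. The coefficient of $t^{\ell(\lambda)}$ is tautological ($(-1)^{\ell(\lambda)}\mathsf{Z}_\mu(\lambda)$ on both sides), while the $|\nu/\mu|\geq2$ terms do not reach degree $\ell(\lambda)-1$, so the coefficient of $t^{\ell(\lambda)-1}$ sees only $\nu=\mu$ and $\nu=\mu+\square$. On the left, $(x_1-t)\cdots(x_{\ell(\lambda)}-t)=(-1)^{\ell(\lambda)}\sum_{r\geq0}(-1)^re_r(x_1,\dots,x_{\ell(\lambda)})\,t^{\ell(\lambda)-r}$ gives $(-1)^{\ell(\lambda)-1}(x_1+\cdots+x_{\ell(\lambda)})\,\mathsf{Z}_\mu(\lambda)$; on the right, the subleading coefficient of $\prod_{k\in X(\mu)}(\mathrm{par}(k)-t)$ is $(-1)^{\ell(\lambda)-1}\sum_{k\in X(\mu)}\mathrm{par}(k)$, and each $\nu=\mu+\square$ contributes $(-1)^{\ell(\lambda)-1}\mathsf{Z}_\nu(\lambda)$. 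Cancelling $(-1)^{\ell(\lambda)-1}$ leaves
\begin{equation*}
	\Bigl(\sum_{i=1}^{\ell(\lambda)}x_i-\sum_{k\in X(\mu)}\mathrm{par}(k)\Bigr)\mathsf{Z}_\mu(\lambda)
	=\sum_{\substack{\nu\subseteq\lambda\\ \nu=\mu+\square}}\mathsf{Z}_\nu(\lambda),
\end{equation*}
so it remains to identify $\sum_i x_i-\sum_{k\in X(\mu)}\mathrm{par}(k)$ with $\mathsf{p}_\mu(\lambda)$ from \eqref{eq:pieri_p_mu_def}. This is elementary: $k\mapsto|X(\lambda)\cap\mathbb{Z}_{\geq k}|$ is a bijection $X(\lambda)\to\{1,\dots,\ell(\lambda)\}$, so $\sum_{k\in X(\lambda)}\mathrm{par}(k)=\sum_ix_i$; splitting $X(\mu)$ over $X(\lambda)$ and $X^c(\lambda)$, and $X(\lambda)$ over $X(\mu)$ and $X^c(\mu)$, turns $\sum_ix_i-\sum_{k\in X(\mu)}\mathrm{par}(k)$ into $\sum_{k\in X^c(\mu)\cap X(\lambda)}\mathrm{par}(k)-\sum_{k\in X(\mu)\cap X^c(\lambda)}\mathrm{par}(k)=\mathsf{p}_\mu(\lambda)$. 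Combined with the vanishing property of \Cref{rmk:excited_diagrams_vanishing}, this verifies the hypotheses of \Cref{prop:general_skew_SYT_sum} and hence finishes the proof of \Cref{thm:general}.

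The step I expect to be the main obstacle is the degree count of the second paragraph: verifying carefully, from the list of admissible $r$-weights, that the one-row configurations for $\mathscr{R}^t_\lambda(\mu,\nu)$ behave as claimed — in particular that passing from $\mu$ to $\mu+(\textnormal{vertical strip of size }s)$ forces at least $s$ turning vertices and so lowers the $t$-degree by at least $s$ — and pinning down the exact leading and subleading coefficients in the cases $s=0,1$. Everything after that is routine polynomial bookkeeping together with the Maya-diagram identity $\sum_{k\in X(\lambda)}\mathrm{par}(k)=\sum_ix_i$.
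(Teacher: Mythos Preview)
Your proposal is correct and follows essentially the same approach as the paper: the paper also starts from \Cref{prop:5V_Pieri_expansion}, establishes the same degree count $\mathscr{R}^t_\lambda(\mu,\nu)=(-t)^{\ell(\lambda)-|\nu/\mu|}+O(t^{\ell(\lambda)-|\nu/\mu|-1})$ and the exact product formula for $\mathscr{R}^t_\lambda(\mu,\mu)$, and then extracts the Pieri rule from the subleading term. The only cosmetic difference is that the paper phrases this as an asymptotic expansion as $t\to\infty$ rather than clearing denominators and comparing coefficients of $t^{\ell(\lambda)-1}$; your version is arguably a bit more explicit (you spell out the identification $\sum_i x_i-\sum_{k\in X(\mu)}\mathrm{par}(k)=\mathsf{p}_\mu(\lambda)$, which the paper defers to the proof of \Cref{thm:general}).
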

\begin{proof}
	We employ \Cref{prop:5V_Pieri_expansion}
	and consider the behavior
	of identity
	\eqref{eq:5V_Pieri_expansion}
	as $t\to\infty$.
	Since $\mathsf{Z}_\mu(\lambda)$ and 
	$\mathsf{Z}_\nu(\lambda)$ do not depend on $t$, 
	it suffices to look at the transfer matrix $\mathscr{R}^t_\lambda(\mu,\nu)$
	defined as the one-row partition function 
	(see \Cref{fig:5V_Pieri_transfer_matrix}).
	
	Recall (\Cref{rmk:X_mu_subset_I_lambda}) that the 
	number of paths in the one-row vertex model
	for $\mathscr{R}^t_\lambda(\mu,\nu)$ 
	is equal to $\ell(\lambda)$.
	First, observe that 
	\begin{equation}
		\label{eq:R_transfer_matrix_asymptotics_proof}
		\mathscr{R}^t_\lambda(\mu,\mu)=
		\prod_{k\in X(\mu)}
		\begin{cases}
			x_i-t, & k=\lambda_i-i\in X(\lambda);\\
			y_j-t, & k=-\lambda_j'+j-1\in X^c(\lambda),
		\end{cases}
	\end{equation}
	which behaves as 
	$t\to+\infty$ as follows:
	\begin{equation}
		\label{eq:R_transfer_matrix_asymptotics_proof3}		
		(-t)^{\ell(\lambda)}+(-t)^{\ell(\lambda)-1}
		\biggl( \ssp
		\sum_{k\in X(\mu)\cap X(\lambda)} x_{|X(\lambda)\operatorname{\cap} \mathbb{Z}_{\ge k}|}
		+
		\sum_{k\in X(\mu)\cap X^c(\lambda)} y_{|X^c(\lambda)\operatorname{\cap} \mathbb{Z}_{\le k}|}
		\biggr)
		+O(t^{\ell(\lambda)-2}).
	\end{equation}
	Indeed, the factors in \eqref{eq:R_transfer_matrix_asymptotics_proof}
	are in one-to-one correspondence with the summands 
	by $(-t)^{\ell(\lambda)-1}$ in \eqref{eq:R_transfer_matrix_asymptotics_proof3},
	cf. \Cref{rmk:how_to_assign_spectral_parameters}.

	Next, for any $\nu$ with $|\nu|>|\mu|$, we have
	\begin{equation}
		\label{eq:R_transfer_matrix_asymptotics_proof2}
		\mathscr{R}^t_\lambda(\mu,\nu)=(-t)^{\ell(\lambda)-|\nu|+|\mu|}+O(t^{\ell(\lambda)-|\nu|+|\mu|-1})
		, \qquad t\to\infty.
	\end{equation}
	Indeed, $|\nu|-|\mu|$ is the number of occupied horizontal 
	edges in the vertex model for $\mathscr{R}^t_\lambda(\mu,\nu)$.
	Placing each extra occupied horizontal edge
	exchanges one
	weight $r_{y_j-t}(1,0;1,0)=y_j-t$ or $r_{x_i-t}(1,0;1,0)=x_i-t$ (growing with $t$)
	by a product of other $r$ weights. All other $r$ weights are equal to $0$ or $1$ 
	(see \eqref{eq:r_weights}). This produces \eqref{eq:R_transfer_matrix_asymptotics_proof2}.

	Let us now combine the asymptotics \eqref{eq:R_transfer_matrix_asymptotics_proof3}, 
	\eqref{eq:R_transfer_matrix_asymptotics_proof2} with the prefactor 
	in \eqref{eq:5V_Pieri_expansion},
	\begin{equation*}
		\frac{1}{(x_1-t)\ldots(x_{\ell(\lambda)}-t) }=
		\left( -t \right)^{-\ell(\lambda)}\Bigl( 1+t^{-1}\sum_{i=1}^{\ell(\lambda)}x_i \Bigr)
		+O(t^{-\ell(\lambda)-2}), \qquad  t\to\infty.
	\end{equation*}
	We see that we can cancel out the overall multiplicative factor $(-t)^{\ell(\lambda)}$. After that, the constant terms in both sides are equal to $\mathsf{Z}_\mu(\lambda)$, which cancel out. Equating the terms of order $t^{-1}$, we obtain the desired Pieri rule.
\end{proof}

\begin{proof}[Proof of \Cref{thm:general}]
	The Pieri rule of \Cref{prop:Pieri_rule} 
	together with the vanishing (\Cref{rmk:excited_diagrams_vanishing})
	and the general result of \Cref{prop:general_skew_SYT_sum}
	imply that 
	\begin{equation*}
		\sum_{T\in \ssp\SYT(\lambda/\mu)}
		\prod_{m=1}^{|\lambda/\mu|}
		\frac{1}{\mathsf{p}_{T^{-1}[< m]}(\lambda)}
		=
		\sum_{D \in \mathcal{E}(\la/\mu)} \prod_{(i,j) \in \la \setminus D} \frac{1}{x_i-y_j}.
	\end{equation*}
	Note that the Pieri coefficients $\mathsf{C}_{\nu/\mu}$ are equal to $1$
	in our case.
	We also employed the definition of $\mathsf{Z}_\mu(\lambda)$
	as a sum over excited diagrams~\eqref{eq:F_mu_is_excited_RHS_of_Naruse}, and 
	cancelled out the factor
	$\mathsf{Z}_{\lambda}(\lambda)$~\eqref{eq:F_lambda_lambda_product}.

	For any $m$, let us denote $T^{-1}[< m]$ by $\nu$. 
	Starting from
	\eqref{eq:pieri_p_mu_def}, we can rewrite
	\begin{equation*}
		\begin{split}
			\mathsf{p}_{\nu}( \lambda )
			&=
			\sum_{k\in X^c(\nu)\cap X(\lambda)} x_{|X(\lambda)\operatorname{\cap} \mathbb{Z}_{\ge k}|}
			-
			\sum_{k\in X(\nu)\cap X^c(\lambda)} y_{|X^c(\lambda)\operatorname{\cap} \mathbb{Z}_{\le k}|}
			\\&=
			\sum_{i=1}^{\ell(\lambda)}
			x_i
			-
			\biggl(\ssp
				\sum_{k\in X(\nu)\cap X(\lambda)} x_{|X(\lambda)\operatorname{\cap} \mathbb{Z}_{\ge k}|}
				+
				\sum_{k\in X(\nu)\cap X^c(\lambda)} y_{|X^c(\lambda)\operatorname{\cap} \mathbb{Z}_{\le k}|}
			\biggr)\\&
			=
			\sum_{i=1}^{\ell(\lambda)} x_i-
			\sum_{j=1}^{\ell(\lambda)} b_{\nu_j-j}.
		\end{split}
	\end{equation*}
	Here
	\begin{equation*}
		b_j\coloneqq 
		\begin{cases}
			x_i,& j=\lambda_i-i;\\
			y_k,& j=n_k,
		\end{cases}
	\end{equation*}
	with the notation
	\begin{equation*}
		\{n_1<\ldots<n_{\lambda_1} \}=\{-\ell(\lambda),-\ell(\lambda)+1,\ldots,\lambda_1-2,\lambda_1-1\}
		\setminus\{\lambda_1-1,\ldots,\lambda_{\ell(\lambda)}-\ell(\lambda) \}.
	\end{equation*}
	We see that the expressions
	$\mathsf{p}_\nu(\lambda)
	=
	\sum_{i=1}^{\ell(\lambda)} x_i-
	\sum_{j=1}^{\ell(\lambda)} b_{\nu_j-j}$,
	where 
	$\mu\subseteq\nu\subseteq\lambda$,
	coincide with the factors
	in the denominator in the left-hand side of the 
	multivariate hook-length formula~\eqref{eq:general2}.
	This completes the proof of \Cref{thm:general}.
\end{proof}

\appendix

\section{A semistandard variant}
\label{app:SSYT_variant}

Let us modify the polynomials $f_j(u)$
\eqref{eq:f_factorial_schur} from \Cref{sec:contour_integrals},
and investigate the resulting contour integrals
defined in the same way as in \eqref{eq:integral_def}.
Denote the integrals by $J$ to avoid confusion.
Let $\beta$ be a parameter,
$a=(a_1,a_2,\ldots)$ be a sequence of parameters as before, and
$\mathsf{m}=(m_1,m_2,\ldots)$ be a sequence of nonnegative integers.
Set 
\begin{equation*}
	f^\beta_r(u \mid a) = \prod_{i=1}^r (u+a_i+\beta u a_i),
\end{equation*}
and
\begin{equation}
	\label{eq:groth_integral}
J_{\mu,\mathsf{m}}(x \mid a) \coloneqq
\frac{ (-1)^{\binom{n}{2}} }{(2\pi\sqrt{-1})^n} \oint_\gamma\ldots
\oint_\gamma
\prod_{i=1}^n \frac{ f^\beta_{\mu_i+m_i-i}(u_i \mid a)}{\prod_{j=1}^{m_i} (u_i-x_j)} \ssp\Delta(u)\ssp du_1 \cdots du_n.
    %= \det\left[ \prod_{r=1}^{\mu_j+n-j} (x_i +a_r+\beta x_ia_r) \right]_{i,j=1}^n \frac{1}{\Delta(x)}
\end{equation}
% Let $f_i(u;\mu) = \prod_{j=1}^{\mu_i+n-i}(u+a_j +\beta u a_j)$, then
% $$F_\mu(x \mid a ) = \frac{1}{(2\pi\sqrt{-1})^n} \oint \prod_i \frac{ f_i(u_i;\mu) }{\prod_j (u_i-x_j)} \Delta(u) du.$$
The contours $\gamma$ are the same as in \Cref{sec:contour_integrals},
they go around all the poles $x_i$ in the positive direction.
Set $m_i=n$ for all $i$, and omit $\mathsf{m}$ 
from the notation. 

Arguing as in \Cref{sub:pieri_rule_by_contour},
we obtain a Pieri-type rule:
\begin{align*}
    &\sum_{\epsilon \in \{0,1\}^n} \beta^{|\epsilon|} J_{\mu + \epsilon}( x \mid a) 
    = \frac{1}{(2\pi \sqrt{-1})^n}  
		\oint_\gamma\ldots\oint_\gamma  \prod_{i=1}^n \frac{ f^\beta_{\mu_i+n-i}(u_i \mid a) }{\prod_{j=1}^{n} (u_i-x_j) } \\
		&\hspace{80pt} \times \prod_{i=1}^n \left(1+ \beta(u_i+a_{\mu_i+1+n-i} + \beta u_i a_{\mu_i+1+n-i}) \right) \Delta(u) \, du_1 \cdots du_n \\
		&\hspace{20pt}= \frac{1}{(2\pi \sqrt{-1})^n}  
		\oint_\gamma\ldots\oint_\gamma \prod_{i=1}^n \frac{ f^\beta_{\mu_i+n-i}(u_i \mid a) }{\prod_{j=1}^{n} (u_i-x_j) } \prod_{i=1}^{n} (1+ \beta u_i)(1+\beta a_{\mu_i+1+n-i} ) \Delta(u) \, du_1 \cdots du_n \\
		&\hspace{20pt}= J_\mu(x \mid a) \prod_{i=1}^n (1+\beta x_i) (1+\beta a_{\mu_i+1+n-i}).
\end{align*}
Here $|\epsilon|=\sum_{i=1}^n \epsilon_i$.
Next, if $\hat{\mu}=\mu + \epsilon$ is not a partition, i.e.
$\mu_i +\epsilon_i < \mu_{i+1} + \epsilon_{i+1}$, we must
have $\mu_i=\mu_{i+1}$, $\epsilon_i=0$ and
$\epsilon_{i+1}=1$, and so
$f_{\hat{\mu}_i+n-i}(u \mid a) =
f_{\hat{\mu}_{i+1}+n-(i+1)}(u \mid a)$. This makes the
integral $0$ by skew symmetry.
Therefore, the Pieri-type rule takes the form:
\begin{equation}\label{eq:ssytJ_pieri}
	\sum_{\nu} J_{\nu} (x\mid a) 
	=
	J_\mu (x \mid a)  
	\prod_{i=1}^n
	(1+\beta x_i) (1+\beta a_{\mu_i+1+n-i}) ,
\end{equation}
where the sum is over all partitions $\nu$ obtained from $\mu$ by adding 
a (possibly employ) vertical strip. 

Similarly to \Cref{thm:integral}, integral
\eqref{eq:groth_integral} can be rewritten as
a determinant of the $f^\beta_i$'s: 
\begin{equation}
	\label{eq:groth_determinant_formula}
	\begin{split}
		J_{\mu}(x \mid a) & = \frac{ (-1)^{\binom{n}{2}} }{(2\pi\sqrt{-1})^n} \oint_\gamma\ldots\oint_\gamma \prod_{i=1}^n \frac{ f^\beta_{\mu_i+n-i}(u_i \mid a)}{\prod_{j=1}^{n} (u_i-x_j)} \ssp\Delta(u)\ssp du_1 \cdots du_n 
		 \\&= \frac{1}{\Delta(x)} \det \Biggl[ \prod_{r=1}^{\mu_j+n-j} (x_i +a_r+\beta x_ia_r)\Biggr]_{i,j=1}^n.
	 \end{split}
\end{equation}
In particular, when $\mu=\varnothing$ only the maximal degree terms in this determinant survive, so
\begin{align}\label{eq:empty_mu}
	J_{\varnothing}(x \mid a) = \prod_{i=1}^{n} (1+\beta a_i)^{n-i}.
\end{align}
\begin{remark}
	The determinantal formula~\eqref{eq:groth_determinant_formula} is similar to the one for factorial Grothendieck polynomials of 
	\cite{mcnamara2006factorial}
	or \cite{hwang2021refined}. However, in order to obtain the Grothendieck polynomials one needs to replace the polynomials $f_{\mu_i+n-i}^\beta (u \mid a)$ with $(1+\beta u)^{i-1}\prod_{j=1}^{\mu_i+n-i}(u+a_j+\beta u a_j)$. The approach outlined here would lead to the identities in~\cite{MPP4GrothExcited} after some tedious manipulations.  
\end{remark}

Consider now the vanishing of $J_\mu(x \mid a)$ for certain values of $x$. Let $x^\la_i\coloneqq  
- \frac{ a_{\la_i+n-i+1} }{1+\beta a_{\la_i+n-i+1}}$. Then $f^\beta_{\mu_i+n-i}(x^\la_j \mid a) =0$ if $\la_j+n-j+1+1 \leq \mu_i+n-i$. 

Let $\lambda$ be such that for some $i$, we have $\la_i+n-i < \mu_i+n-i$, i.e., $\la_i<\mu_i$. Then we have $f^\beta_{\mu_r+n-r}(x_j^\la\mid a)=0$ for $r \leq i$ and $j \geq i$, which
implies that 
$\det [f^\beta_{\mu_i+n-i}(x_j^\la \mid a)]_{i,j=1}^n=0$. 
On the other hand, if $\lambda=\mu$, then the matrix is lower triangular.
This implies
\begin{lemma}[Vanishing property]
     Let $x^\la_i\coloneqq -\frac{ a_{\la_i+n-i+1} }{1+\beta a_{\la_i+n-i+1}}$. Then
        $$J_\mu(x^\la \mid a) = 
				\begin{cases} 0, &\text{ if } \mu \not \subset \la; \\
					\displaystyle \prod_{i=1}^n \prod_{j=1}^{\mu_i+n-i} \frac{ a_j - a_{\la_i+n-i+1} }{(1+\beta a_{\la_i+n-i+1} )}, & \text{ if } \la=\mu .
        \end{cases}$$
 \end{lemma}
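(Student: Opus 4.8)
The plan is to deduce the lemma directly from the determinant formula \eqref{eq:groth_determinant_formula}, which writes $J_\mu(x\mid a)=\Delta(x)^{-1}\det\bigl[f^\beta_{\mu_j+n-j}(x_i\mid a)\bigr]_{i,j=1}^n$ with $f^\beta_r(u\mid a)=\prod_{k=1}^r\bigl((1+\beta a_k)u+a_k\bigr)$. Everything hinges on one elementary evaluation: a one-line computation from the definition $x^\la_j=-a_{\la_j+n-j+1}/(1+\beta a_{\la_j+n-j+1})$ shows $(1+\beta a_k)x^\la_j+a_k=(a_k-a_{\la_j+n-j+1})/(1+\beta a_{\la_j+n-j+1})$, and multiplying over $k=1,\dots,r$ gives $f^\beta_r(x^\la_j\mid a)=\prod_{k=1}^r(a_k-a_{\la_j+n-j+1})/(1+\beta a_{\la_j+n-j+1})$. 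For generic $a$ this product is nonzero unless one of its factors carries the index $k=\la_j+n-j+1$, i.e. unless $r\ge\la_j+n-j+1$; this vanishing threshold drives both cases.

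For $\mu\not\subseteq\la$ I would choose an index $i_0$ with $\la_{i_0}<\mu_{i_0}$. Since $k\mapsto\la_k-k$ and $k\mapsto\mu_k-k$ are both nonincreasing, for any row $i\ge i_0$ and column $j\le i_0$ one has $\mu_j-j\ge\mu_{i_0}-i_0>\la_{i_0}-i_0\ge\la_i-i$, equivalently $\mu_j+n-j\ge\la_i+n-i+1$, so the entry $f^\beta_{\mu_j+n-j}(x^\la_i\mid a)$ vanishes. This produces an $(n-i_0+1)\times i_0$ all-zero block in the matrix; as its two side-lengths sum to $n+1>n$, the determinant is $0$, hence $J_\mu(x^\la\mid a)=0$.

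For $\la=\mu$, set $c_i:=a_{\mu_i+n-i+1}$. The vanishing criterion says $f^\beta_{\mu_j+n-j}(x^\mu_i\mid a)=0$ precisely when $\mu_j-j>\mu_i-i$; because $k\mapsto\mu_k-k$ is \emph{strictly} decreasing, that is exactly when $j<i$. So the matrix is upper triangular with diagonal entries $f^\beta_{\mu_i+n-i}(x^\mu_i\mid a)=\prod_{j=1}^{\mu_i+n-i}(a_j-c_i)/(1+\beta c_i)$, and its determinant is the product of these. One then divides by $\Delta(x^\mu)=\prod_{i<j}(x^\mu_i-x^\mu_j)$; using $x^\mu_i-x^\mu_j=(c_j-c_i)/\bigl((1+\beta c_i)(1+\beta c_j)\bigr)$ this Vandermonde factor cancels against the matching factors already sitting inside the diagonal product, leaving the asserted expression for $J_\mu(x^\mu\mid a)$.

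There is no genuinely hard step here; the delicate points are purely bookkeeping. One must pin down the vanishing threshold $r=\la_j+n-j+1$ exactly, since an off-by-one there corrupts the whole computation; verify that the zero block in the first case really has dimensions summing to $n+1$; and, in the equality case, keep honest track of the factor $\Delta(x^\mu)$ in \eqref{eq:groth_determinant_formula} so that it is cancelled against the triangular product rather than silently dropped. (As a consistency check one recovers $J_\varnothing(x^\varnothing\mid a)=\prod_{i}(1+\beta a_i)^{n-i}$ of \eqref{eq:empty_mu}.)
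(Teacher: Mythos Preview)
Your approach is exactly the paper's: the text preceding the lemma uses the determinantal formula \eqref{eq:groth_determinant_formula}, exhibits the same zero block when $\mu\not\subseteq\lambda$, and for $\lambda=\mu$ simply observes that the matrix is triangular (the paper says ``lower triangular'' because it indexes rows by $\mu$ and columns by $x$, the transpose of your convention).

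There is, however, a slip in your $\lambda=\mu$ paragraph. You correctly compute the determinant as the diagonal product $\prod_i\prod_{j=1}^{\mu_i+n-i}(a_j-c_i)/(1+\beta c_i)$ and correctly insist on dividing by $\Delta(x^\mu)$. But the ``asserted expression'' in the lemma is \emph{literally} that diagonal product, so dividing by a nontrivial Vandermonde cannot leave it unchanged; your claimed cancellation does not land on the displayed formula. In fact the lemma's displayed value is off by the factor $\Delta(x^\mu)$ --- a glitch in the paper that is harmless downstream, since only the ratio $J_\mu(x^\lambda\mid a)/J_\lambda(x^\lambda\mid a)$ is used and $\Delta(x^\lambda)$ cancels there. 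Your own consistency check exposes this: the diagonal product for $\mu=\varnothing$ is $\prod_{r}\prod_{j<r}(a_j-a_r)/(1+\beta a_r)$, which is not $\prod_i(1+\beta a_i)^{n-i}$; you recover \eqref{eq:empty_mu} only \emph{after} dividing by $\Delta(x^\varnothing)$. So your argument is sound and matches the paper's, but the sentence asserting that the cancellation reproduces the stated formula should be replaced by the observation that the stated formula omits the Vandermonde denominator.
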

The Pieri-type rule~\eqref{eq:ssytJ_pieri} can be rewritten
as $(\cdots) J_\mu = \sum_{ \nu \supset \mu} J_\nu$, where
the sum is over all~$\nu$ such that $\nu /\mu$ is a nonempty
vertical strip. Iterating this identity as 
in \Cref{sub:general_formalism_general},
we get the following result.
 \begin{theorem}\label{thm:ssytJ}
		 Let $\mu \subset \la$ and set $x_i^\la = -\frac{
		 a_{\la_i+n-i+1} }{1+\beta a_{\la_i+n-i+1}}$. 
			For a Young diagram $\nu$, let
			$$Y(\nu) : = \prod_i (1+\beta x_i^\la) (1+\beta a_{\nu_i+n-i}) -1.$$     Then we have 
      \begin{equation}
				\label{eq:J_pieri}
				\frac{J_\mu(x^\la \mid a) }{  J_\la(x^\la \mid a)}= \sum_{T \in \ssp\SSYT(\la'/\mu')} \prod_{k=1}^{|\lambda/\mu|} \frac{1}{Y (T[< k])},
			\end{equation}
			where the sum is over all \textnormal{SSYT} of shape $\la'/\mu'$, and
			$T[<k]=\nu$ means that the shape 
			$\lambda/\nu$ is filled with entries $\ge k$. 
			By agreement, $T[<1]=\mu$.
 \end{theorem}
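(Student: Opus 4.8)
The plan is to treat \Cref{thm:ssytJ} as one more instance of the general mechanism of \Cref{sub:general_formalism_general}, in the multi-box form of \Cref{rem:pieri_rule_more_general} in which each Pieri step adds a vertical strip. Set $\mathsf Z_\mu(\lambda)\coloneqq J_\mu(x^\lambda\mid a)$ with the specialization $x_i^\lambda=-a_{\lambda_i+n-i+1}/(1+\beta a_{\lambda_i+n-i+1})$. Both structural inputs are already in place: the vanishing property (the Lemma immediately preceding the theorem), which gives $\mathsf Z_\mu(\lambda)=0$ for $\mu\not\subseteq\lambda$ and a nonzero product value for $\mathsf Z_\lambda(\lambda)$; and the vertical-strip Pieri identity~\eqref{eq:ssytJ_pieri}. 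No further analytic work is needed: the determinantal form of $J_\mu$, the vanishing Lemma, and the factorization $\prod_i\bigl(1+\beta(u_i+a+\beta u_i a)\bigr)=\prod_i(1+\beta u_i)(1+\beta a)$ behind~\eqref{eq:ssytJ_pieri} have all been carried out, so what remains is bookkeeping.

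First I would rewrite~\eqref{eq:ssytJ_pieri} in the shape of~\eqref{eq:pieri_rule_more_general}. Pulling the $\nu=\mu$ summand (the empty vertical strip) out of the left-hand sum and moving it to the right gives
\begin{equation*}
	\sum_{\substack{\mu\subsetneq\nu\subseteq\lambda\\ \nu/\mu\ \text{a vertical strip}}} J_\nu(x^\lambda\mid a)
	= Y(\mu)\,J_\mu(x^\lambda\mid a),
\end{equation*}
with $Y(\mu)$ the quantity in the theorem statement. This is~\eqref{eq:pieri_rule_more_general} with $\mathsf p_\mu(\lambda)=Y(\mu)$, $S^+(\mu)=\{\nu:\mu\subsetneq\nu\subseteq\lambda,\ \nu/\mu\ \text{a nonempty vertical strip}\}$, and all Pieri coefficients equal to~$1$. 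I would record that for generic $a$ and $\beta$ one has $Y(\nu)\ne0$ whenever $\mu\subseteq\nu\subsetneq\lambda$, so the divisions below are legitimate, and note in passing that feeding $\nu=\lambda$ into~\eqref{eq:ssytJ_pieri} together with vanishing forces $Y(\lambda)=0$, the analogue of $\mathsf p_\lambda(\lambda)=0$.

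Next I would iterate exactly as in the proof of \Cref{prop:general_skew_SYT_sum} (in the vertical-strip version of \Cref{rem:pieri_rule_more_general}): starting from $J_\mu(x^\lambda\mid a)$, repeatedly expand through the displayed Pieri rule, growing a chain $\mu=\mu^{(0)}\subsetneq\mu^{(1)}\subsetneq\cdots$ in which each $\mu^{(k)}/\mu^{(k-1)}$ is a nonempty vertical strip. The process terminates precisely when the chain reaches $\lambda$, because $J_\nu(x^\lambda\mid a)=0$ for $\nu\not\subseteq\lambda$. Dividing by $J_\lambda(x^\lambda\mid a)$ (nonzero by the Lemma) yields
\begin{equation*}
	\frac{J_\mu(x^\lambda\mid a)}{J_\lambda(x^\lambda\mid a)}
	=\sum_{\mu=\mu^{(0)}\subsetneq\cdots\subsetneq\mu^{(N)}=\lambda}\ \prod_{k=1}^{N}\frac{1}{Y(\mu^{(k-1)})},
\end{equation*}
the sum being over all chains whose successive quotients are nonempty vertical strips.

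Finally comes the combinatorial identification. Such a chain is in bijection with $\SSYT(\lambda'/\mu')$: label each box $u\in\lambda/\mu$ by the index $k$ with $u\in\mu^{(k)}/\mu^{(k-1)}$; a vertical strip has at most one box per row, so the labels strictly increase along each row of $\lambda/\mu$, while the requirement that every $\mu^{(k)}$ be a partition forces the labels to weakly increase down each column --- precisely the SSYT condition for the transposed shape $\lambda'/\mu'$. Under this bijection $\mu^{(k-1)}$ is the shape $T[<k]$ of entries $<k$, the number of steps $N$ is the number of distinct values of $T$, and the chain sum becomes $\sum_{T\in\SSYT(\lambda'/\mu')}\prod_k Y(T[<k])^{-1}$, which is~\eqref{eq:J_pieri}. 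I expect the only real care-points to be fixing the direction of the transposition, matching the index of $Y(\cdot)$ with the factor appearing in~\eqref{eq:ssytJ_pieri}, and checking the genericity needed to divide by the intermediate $Y(\nu)$'s; everything else is a transcription of the general formalism.
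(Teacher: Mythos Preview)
Your proposal is correct and follows exactly the same approach as the paper: the paper simply says to rewrite~\eqref{eq:ssytJ_pieri} as $(\cdots)J_\mu=\sum_{\nu\supsetneq\mu}J_\nu$ over nonempty vertical strips and then iterate as in \Cref{sub:general_formalism_general}, which is precisely what you spell out. Your care-point about matching the index in $Y(\cdot)$ with the factor $a_{\mu_i+n-i+1}$ in~\eqref{eq:ssytJ_pieri} is well taken, and the transposition to $\SSYT(\lambda'/\mu')$ is the right bookkeeping for vertical-strip chains.
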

 When $\mu=\varnothing$, the RHS
 of \eqref{eq:J_pieri}
 is a sum over $\SSYT(\la')$, and the LHS is the product
$$  \frac{J_\varnothing (x^\la \mid a)}{J_\la(x^\la \mid a)} = \prod_{i=1}^n \frac{ (1+\beta a_{\la_i+n-i+1})^{n-i}(1+\beta a_i)^{n-i}}{\prod_{j=1}^{n-i}(a_j - a_{\la_i+n-i+1})}.$$
 
 % In particular, the sum over SSYT's is finite, and the largest terms are SYTs.
 %Let $\mu = \varnothing$, then $F_\varnothing(x \mid a) =...?$
To see excited diagrams 
in the left-hand side of \eqref{eq:J_pieri},
let $z_i = -\frac{a_i}{1+\beta a_i}$. One can check that
\begin{equation*}
	J_\mu (x \mid a) = \prod_{i=1}^n
	\prod_{j=1}^{\mu_i+n-i}
	\frac{1}{1+\beta a_j} F_\mu(x \mid z),
\end{equation*}
where $F_\mu(x \mid z)$ is the factorial Schur function from
\Cref{sec:contour_integrals}.
Then $x^\la = z^\la$, and we 
can rewrite \eqref{eq:J_pieri} 
in terms of excited diagrams:

\begin{theorem}\label{thm:ssyt_excited}
	Let $x_1,x_2,\ldots, y_1,y_2,\ldots$ be two sets of
	indeterminates, and set 
	\begin{equation*}
		a_{\la_i+n-i+1} =
		-\frac{x_i}{1+\beta x_i},\qquad  a_{\ell_j} =
		-\frac{y_j}{1+\beta y_j},
	\end{equation*}
	where $\ell =
	[1,\ldots,n+\la_1-1] \setminus \{\la_j+n-j+1\colon 1\le j\le n\}$.
	With this notation, we have
    \begin{equation*}
        \sum_{D \in \mathcal{E}(\la/\mu)} \prod_{(i,j) \in \la \setminus D} (x_i-y_j) = 
				\prod_{i=1}^n 
				\prod_{j=1}^{\mu_i+n-i} \frac{ (a_j - a_{\la_i+n-i+1})(1+\beta a_j) }{(1+\beta a_{\la_i+n-i+1} )} \sum_{T \in \ssp\SSYT(\la'/\mu')} \prod_{k=1}^{|\lambda/\mu|} \frac{1}{Y (T[< k])}.
    \end{equation*}
\end{theorem}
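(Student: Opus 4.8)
The plan is to derive \Cref{thm:ssyt_excited} from \Cref{thm:ssytJ} by translating everything back into the factorial Schur functions $F_\mu(x\mid z)$ of \Cref{sec:contour_integrals} and their excited-diagram expansion. The bridge is the change of parameters $z_r=-a_r/(1+\beta a_r)$, under which $u+a_r+\beta u a_r=(1+\beta a_r)(u-z_r)$; this is exactly what yields the relation $J_\mu(x\mid a)=\bigl(\prod_{i=1}^n\prod_{j=1}^{\mu_i+n-i}\tfrac1{1+\beta a_j}\bigr)F_\mu(x\mid z)$ recorded just before the theorem. So no new construction is needed, only a careful composition of identities already proved.

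First I would unwind the specialization of $a$ in the statement. Substituting $a_{\lambda_i+n-i+1}=-x_i/(1+\beta x_i)$ and $a_{\ell_j}=-y_j/(1+\beta y_j)$ into $z_r=-a_r/(1+\beta a_r)$ and clearing denominators gives $z_{\lambda_i+n-i+1}=x_i$ and $z_{\ell_j}=y_j$. Hence the shift sequence $z$ is precisely the sequence $a^\lambda$ of \eqref{eq:a_lambda_def} built from $x_1,\dots,x_n$ and $y_1,y_2,\dots$, while the interpolation point $x_i^\lambda=-a_{\lambda_i+n-i+1}/(1+\beta a_{\lambda_i+n-i+1})$ equals $x_i$. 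Then \Cref{prop:excited} applied to this $z=a^\lambda$ gives $F_\mu(x^\lambda\mid z)=\sum_{D\in\mathcal E(\lambda/\mu)}\prod_{(i,j)\in\lambda\setminus D}(x_i-y_j)$, the left-hand side of the theorem. In parallel, I would evaluate $J_\lambda(x^\lambda\mid a)$ by the vanishing property established just before \Cref{thm:ssytJ}, obtaining $J_\lambda(x^\lambda\mid a)=\prod_{i=1}^n\prod_{j=1}^{\lambda_i+n-i}\frac{a_j-a_{\lambda_i+n-i+1}}{1+\beta a_{\lambda_i+n-i+1}}$.

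Finally I would substitute: \Cref{thm:ssytJ} gives $J_\mu(x^\lambda\mid a)=J_\lambda(x^\lambda\mid a)\sum_{T\in\SSYT(\lambda'/\mu')}\prod_{k=1}^{|\lambda/\mu|}\frac1{Y(T[<k])}$, and combining this with the $J$-to-$F$ relation at $x=x^\lambda$, with the excited-diagram identification of $F_\mu(x^\lambda\mid z)$, and with the value of $J_\lambda(x^\lambda\mid a)$ above, then solving for $\sum_{D}\prod_{(i,j)\in\lambda\setminus D}(x_i-y_j)$, produces the asserted identity up to a monomial prefactor in the $(1+\beta a_j)$'s and $(a_j-a_{\lambda_i+n-i+1})$'s. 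The only real work is to reshape this prefactor into the stated $\prod_{i=1}^n\prod_{j=1}^{\mu_i+n-i}\frac{(a_j-a_{\lambda_i+n-i+1})(1+\beta a_j)}{1+\beta a_{\lambda_i+n-i+1}}$, and this is where I expect all the friction: the products coming from the vanishing of $J_\lambda$ run up to $\lambda_i+n-i$ rather than $\mu_i+n-i$, so one must cancel the surplus factors against the Vandermonde $\Delta(x^\lambda)$ concealed in the determinantal formula \eqref{eq:groth_determinant_formula} — equivalently, against the factor $\prod_{(i,j)\in\lambda}(x_i-y_j)$ that appears when $J_\lambda(x^\lambda\mid a)$ is instead computed through $F_\lambda$ and \Cref{cor:vanishing}. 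This is the same cancellation pattern already exploited in the proof of \Cref{cor:vanishing}, and apart from this bookkeeping the argument is routine.
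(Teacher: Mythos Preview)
Your approach is exactly the paper's: it derives the theorem by combining \Cref{thm:ssytJ} with the relation $J_\mu(x\mid a)=\prod_{i=1}^n\prod_{j=1}^{\mu_i+n-i}(1+\beta a_j)^{-1}\,F_\mu(x\mid z)$ and the excited-diagram expansion \Cref{prop:excited}, after checking that the given specialization of the $a$'s turns $z$ into $a^\lambda$ and $x^\lambda$ into $x$. The only wobble is your last paragraph: there is no hidden Vandermonde to cancel, and the two computations of $J_\lambda(x^\lambda\mid a)$ (via the vanishing lemma and via $F_\lambda$ together with \Cref{cor:vanishing}) necessarily agree rather than cancel against one another---just substitute directly and collect the resulting prefactors.
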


\begin{remark}
	Observe that $Y(\nu) = \beta \sum_{i=1}^n (x_i^\la +
	a_{\nu_i+n-i}) + O(\beta^2) $. If we let $\beta \to 0$,
	and perform cancelations with the factors $a_j-a_r$ which
	are of the form $ \beta(y-x)$, the surviving terms above
	would be the ones where $T$ has a maximal number of
	different entries, so it is an SYT. This recovers the
	original formula of \Cref{thm:general}. 
	We do not observe any substitutions that directly connect
	the formula in Theorem~\ref{thm:ssyt_excited} to the
	expression in \cite[Theorem 9.3]{morales2023minimal}.
\end{remark}

\section{Skew hook-length formula from Macdonald polynomials}
\label{sub:Macdonald_hook_length}

Here we consider 
the example of interpolation Macdonald polynomials
\cite{knop1997,
knop1997symmetric,
sahi1996interpolation,
okounkov_newton_int,
okounkov1998shifted},
and apply the general formalism of \Cref{sec:general_formalism} 
to obtain a ``skew hook-length type'' formula involving summation 
over skew standard Young tableaux.
The discussion in the current \Cref{sub:Macdonald_hook_length} 
does not rely on contour integral or vertex model techniques
of \Cref{sec:contour_integrals,sec:YBE_proof_of_Naruse}, respectively.

We denote the interpolation Macdonald polynomials
by $I_\mu(x_1,\ldots,x_n;q,t )$. 
Note that we work only with symmetric polynomials
and not symmetric functions, so we drop the index $n$ 
(which is fixed)
from the notation $I_{\mu|n}$
used in 
\cite{olshanski2019interpolation}.
Throughout the current \Cref{sub:Macdonald_hook_length}, we assume that $n\ge \ell(\mu)$.

The polynomials $I_\mu$ are inhomogeneous symmetric
polynomials of degree $|\mu|$
whose top degree homogeneous part is the 
Macdonald symmetric polynomial $P_\mu(x_1,x_2,\ldots,x_n;q,t)$ \cite[Ch.~VI]{Macdonald1995}.
The substitution which ensures vanishing properties is
\begin{equation}
	\label{eq:a_lambda_Macdonald}
	\mathsf{x}^{(q,t)}(\lambda)
	=
	\bigl(x^{(q,t)}_1(\lambda),\ldots, x^{(q,t)}_n(\lambda)\bigr)
	\coloneqq
	\bigl( q^{-\lambda_1},q^{-\lambda_2}t,\ldots,q^{-\lambda_n}t^{n-1}  \bigr).
\end{equation}
Note that 
here we use the 
normalization 
from \cite{olshanski2019interpolation}, which means that the substitution
must be as in \eqref{eq:a_lambda_Macdonald} 
(there are other equivalent variants in the literature).

Let us recall the vanishing 
property and a tableau formula for $I_\mu$ \cite{okounkov_newton_int,okounkov1998shifted}.

\begin{proposition}
	\label{prop:vanishing_tableau_formula_Macdonald}
	\begin{enumerate}[\bf1.\/]
		\item 
			We have $I_\mu(\mathsf{x}^{(q,t)}(\lambda);q,t)=0$ unless $\mu\subseteq \lambda$.
		\item 
			The interpolation Macdonald polynomials $I_\mu$ admit the following tableau formula:
			\begin{equation}
				\label{eq:tableau_formula_Macdonald}
				I_\mu(x_1,\ldots,x_n;q,t)=
				\sum_{R\in {RTab}(\mu,n)}
				\psi_{R}(q; t) \prod_{(i,j) \in \mu}  \bigl( x_{R(i,j)} - q^{1-j}t^{R(i,j)+i-2} \bigr),
			\end{equation}
			where the sum is over all reverse semistandard tableaux of shape $\mu$ with values 
			in $\left\{ 1,\ldots,n  \right\}$
			(that is, the values in the tableau must 
			weakly decay along the rows and strictly decay down the columns).
			The coefficients $\psi_{R}(q; t)$
			(where we view $R$ as a sequence of horizontal strips)
			are rational functions in 
			$q,t$ given in \cite[Ch.~VI, (6.24)(ii) and (7.11')]{Macdonald1995}.
		\item
			We have
			\begin{equation}
				\label{eq:I_lambda_lambda_Macdonald}
				I_\lambda(\mathsf{x}^{(q,t)}(\lambda);q,t)=
				\prod_{(i,j)\in \lambda}
				\bigl(
					q^{-\lambda_i} t^{i-1} - q^{1-j} t^{\lambda_j'-1}
				\bigr)
				,
			\end{equation}
			where $\lambda'$ is the transposed Young diagram of $\lambda$.
	\end{enumerate}
\end{proposition}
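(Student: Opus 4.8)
The plan is to reduce everything to the interpolation characterization of $I_\mu$ together with the two classical technical inputs of Okounkov and Knop--Sahi. Recall that $I_\mu(\cdot;q,t)$ is, up to a scalar, the unique symmetric polynomial in $x_1,\dots,x_n$ of degree at most $|\mu|$ that vanishes at every node $\mathsf{x}^{(q,t)}(\nu)$ with $\ell(\nu)\le n$, $|\nu|\le|\mu|$ and $\nu\ne\mu$, the scalar being fixed by demanding that the top-degree homogeneous part be the Macdonald polynomial $P_\mu$. Existence and uniqueness are a triangularity argument: order partitions by size and then dominance, and observe that the matrix with entries $P_\nu(\mathsf{x}^{(q,t)}(\kappa))$ is block-triangular with invertible diagonal. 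This already gives the easy half of part~1, namely $I_\mu(\mathsf{x}^{(q,t)}(\lambda))=0$ whenever $|\lambda|\le|\mu|$ and $\mu\ne\lambda$.

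For part~2 and the full strengthening of part~1 to all $\lambda$ with $\mu\not\subseteq\lambda$, I would set $\widetilde I_\mu$ equal to the right-hand side of \eqref{eq:tableau_formula_Macdonald} and prove $\widetilde I_\mu=I_\mu$. First, $\widetilde I_\mu$ has degree $|\mu|$, and extracting leading terms (using the bijection $R\mapsto n+1-R$ between reverse semistandard tableaux and column-strict ones, under which the $\psi_R$ go to Macdonald's branching coefficients) identifies its top-degree part with the combinatorial formula for $P_\mu$ \cite[Ch.~VI]{Macdonald1995}. So it remains to verify the extra-vanishing $\widetilde I_\mu(\mathsf{x}^{(q,t)}(\nu))=0$ for $\mu\not\subseteq\nu$. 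Separating the boxes of $\mu$ carrying the maximal entry $n$ yields the branching identity
\[
\widetilde I_\mu(x_1,\dots,x_n)=\sum_{\kappa}\psi_{\mu/\kappa}(q,t)\,\Bigl(\textstyle\prod_{(i,j)\in\mu/\kappa}\bigl(x_n-q^{1-j}t^{\,n+i-2}\bigr)\Bigr)\widetilde I_\kappa(x_1,\dots,x_{n-1}),
\]
the sum over $\kappa\subseteq\mu$ with $\mu/\kappa$ a horizontal strip. One then invokes the fact that $\widetilde I_\mu$ is a common eigenfunction of the Knop--Sahi/Cherednik difference operators, and combines the spectral separation of the nodes with the branching identity to force all the relevant evaluations to vanish; equivalently one runs Okounkov's $q$-integral argument. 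Either route identifies $\widetilde I_\mu$ with $I_\mu$ and upgrades part~1 to its stated form.

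For part~3 I would substitute $x=\mathsf{x}^{(q,t)}(\lambda)$, i.e.\ $x_k=q^{-\lambda_k}t^{k-1}$, into \eqref{eq:tableau_formula_Macdonald}, so the $(i,j)$-factor becomes $q^{-\lambda_{R(i,j)}}t^{R(i,j)-1}-q^{1-j}t^{R(i,j)+i-2}$. Tracking along rows and columns when such a factor can vanish shows that every reverse semistandard tableau except one contains a vanishing box, so a single term survives; that tableau has $\psi_R(q,t)=1$, and its product collapses to $\prod_{(i,j)\in\lambda}\bigl(q^{-\lambda_i}t^{i-1}-q^{1-j}t^{\lambda'_j-1}\bigr)$, which is \eqref{eq:I_lambda_lambda_Macdonald}. (As a sanity check, after factoring out $q^{-\lambda_i}t^{i-1}$ the right-hand side becomes a principal-specialization monomial times the Macdonald hook product $\prod_{s\in\lambda}(1-q^{a(s)+1}t^{\ell(s)})$, matching the known ``evaluation'' of $I_\lambda$ at its own node.)

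The decisive obstacle is the full extra-vanishing used above: the branching reduction does \emph{not} close on its own, since for a ``bad box'' $(i,j)\in\mu$ with $(i,j)\notin\nu$ the linear factor $x_n-q^{1-j}t^{\,n+i-2}$ evaluated at $x_n=q^{-\nu_n}t^{n-1}$ generically does not vanish; one genuinely needs the extra structure — the eigenfunction property under the difference operators, or Okounkov's integral representation — to control all nodes simultaneously. This is the technical heart of \cite{okounkov_newton_int,knop1997,sahi1996interpolation}, and it is the step I would lean on rather than reprove; the interpolation characterization, the leading-term identification, and the combinatorics of part~3 are routine once it is in hand.
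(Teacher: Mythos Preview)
Your proposal is correct and matches the paper's treatment: the paper does not prove parts~1 and~2 at all but simply cites \cite{okounkov_newton_int,okounkov1998shifted}, and for part~3 it gives exactly your argument, noting that after the substitution $x=\mathsf{x}^{(q,t)}(\lambda)$ there is a unique reverse tableau $R(i,j)=\lambda_j'-i+1$ with nonvanishing contribution and $\psi_R(q,t)=1$. Your only omission is that you do not name this surviving tableau explicitly; doing so (and checking that the column constraint $R(1,j)\ge\lambda_j'$ forces $R=R_0$ once the first row is pinned down) would make the sketch complete.
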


Formula \eqref{eq:I_lambda_lambda_Macdonald} follows from
\eqref{eq:tableau_formula_Macdonald} since 
for $x=\mathsf{x}^{(q,t)}(\lambda)$,
there is a unique reverse tableau
$R(i,j)=\lambda_j'-i+1$
contributing a nonzero term to the sum, and for it we have $\psi_{R}(q; t)=1$.

\medskip

A (one-box) Pieri formula for 
the (non-specialized) interpolation polynomials
$I_\mu$ has 
the form
\begin{equation}
	\label{eq:Pieri_Macdonald}
	I_\mu
	(x_1,\ldots,x_n;q,t)
	\cdot
	\sum_{i=1}^{n}
	\bigl(x_i -q^{-\mu_i}t^{i-1} \bigr)
	=
	\sum_{\nu=\mu+\square}
	\varphi_{\nu/\mu}(q;t)	
	\ssp
	I_\nu(x_1,\ldots,x_n;q,t),
\end{equation}
where
$\varphi_{\nu/\mu}(q;t)$ are the rational functions 
in $q,t$
given in \cite[Ch.~VI, (6.24)(i)]{Macdonald1995}.
Identity \eqref{eq:Pieri_Macdonald} 
follows by comparing the degrees and top homogeneous
components in both sides, and using the uniqueness of interpolation.

\begin{remark}
	The Pieri rule can be generalized to a skew Cauchy type identity
	(also sometimes called Pieri rule)
	involving summation over horizontal strips
	\cite[Lemmas~5.5~and~5.9]{olshanski2019interpolation}:
	\begin{equation*}
			\begin{split}
			&
			I_\mu
			(x_1,\ldots,x_n;q,t)
			\cdot\prod_{i=1}^{n}
			\frac{(x_iyt;q)_\infty}{(x_iy;q)_\infty}
			\\&\hspace{40pt}=
			\sum_{\nu=\mu+\textnormal{horizontal strip}} I_\nu(x_1,\ldots,x_n;q,t)
			\cdot
			\varphi_{\nu/\mu}(q;t)\, y^{|\nu|-\mu}\prod_{i=1}^{n}
			\frac{(yq^{-\mu_i}t^i;q)_\infty}{(yq^{-\nu_i}t^{i-1};q)_\infty}.
		\end{split}
	\end{equation*}
\end{remark}

Applying \Cref{prop:general_skew_SYT_sum}
together with the properties of the interpolation Macdonald polynomials
in \Cref{prop:vanishing_tableau_formula_Macdonald},
we immediately obtain the following skew hook-length type formula:

\begin{proposition}[Skew hook-length type formula with Macdonald parameters]
	For any $\mu\subseteq\lambda$, we have
	\begin{equation*}
		\begin{split}
			&
			\sum_{T\in \ssp\SYT(\lambda/\mu)}
			\varphi_{T}(q;t)
			\prod_{k=1}^{|\lambda/\mu|}
			\biggl(\ssp\sum_{i=1}^{\ell(\lambda)}
				t^{i-1}\Bigl(
					q^{-\lambda_i}-q^{-T^{-1}[< k]_i}
				\Bigr)
			\biggr)^{-1}
			\\&\hspace{20pt}=
			\prod_{(i,j)\in \lambda}
			\bigl(
				q^{-\lambda_i} t^{i-1} - q^{1-j} t^{\lambda_j'-1}
			\bigr)^{-1}
			\sum_{R\in {RTab}(\mu,\ell(\lambda))}
			\psi_{R}(q; t) \prod_{(i,j) \in \mu}  t^{R_{i,j}-1}\bigl( q^{-\lambda_{R(i,j)}} - q^{1-j}t^{i-1} \bigr),
		\end{split}
	\end{equation*}
	where the left-hand sum is over skew standard Young tableaux $T$ of shape $\lambda/\mu$,
	and the right-hand side sum is over reverse semistandard tableaux $R$ of shape $\mu$
	with entries in $\left\{ 1,\ldots,\ell(\lambda)  \right\}$.
\end{proposition}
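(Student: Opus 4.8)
The plan is to apply the general formalism of \Cref{prop:general_skew_SYT_sum} with $n=\ell(\lambda)$ variables, taking
\[
	\mathsf{Z}_\mu(\lambda)\coloneqq I_\mu\bigl(\mathsf{x}^{(q,t)}(\lambda);q,t\bigr),
\]
the specialization \eqref{eq:a_lambda_Macdonald} (note $n=\ell(\lambda)\ge\ell(\mu)$ since $\mu\subseteq\lambda$). Both hypotheses of that proposition are essentially already recorded. The vanishing property is part~1 of \Cref{prop:vanishing_tableau_formula_Macdonald}, which gives $\mathsf{Z}_\mu(\lambda)=0$ unless $\mu\subseteq\lambda$, combined with formula~\eqref{eq:I_lambda_lambda_Macdonald} applied with $\lambda$ replaced by $\mu$, which shows $\mathsf{Z}_\mu(\mu)=\prod_{(i,j)\in\mu}\bigl(q^{-\mu_i}t^{i-1}-q^{1-j}t^{\mu_j'-1}\bigr)\ne 0$ for generic $q,t$.

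For the Pieri rule I would specialize identity~\eqref{eq:Pieri_Macdonald} at $x_i=x^{(q,t)}_i(\lambda)=q^{-\lambda_i}t^{i-1}$, $1\le i\le\ell(\lambda)$. The left-hand multiplier becomes $\sum_{i=1}^{\ell(\lambda)}t^{i-1}\bigl(q^{-\lambda_i}-q^{-\mu_i}\bigr)$, which I take as $\mathsf{p}_\mu(\lambda)$; on the right, any summand with $\nu=\mu+\square\not\subseteq\lambda$ drops out by the vanishing property, so what remains is precisely \eqref{eq:pieri_rule_general} with $\mathsf{C}_{\nu/\mu}=\varphi_{\nu/\mu}(q;t)$. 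One must also check $\mathsf{p}_\mu(\lambda)\ne 0$ for $\mu\subsetneq\lambda$ at generic $q,t$: viewing $\mathsf{p}_\mu(\lambda)$ as a polynomial in $t$ with coefficients in $\mathbb{Q}(q)$, the coefficient of $t^{i_0-1}$, where $i_0$ is the largest index with $\lambda_{i_0}\ne\mu_{i_0}$, equals $q^{-\lambda_{i_0}}-q^{-\mu_{i_0}}\ne 0$; and $\mathsf{p}_\lambda(\lambda)=0$, consistent with the remark after~\eqref{eq:pieri_rule_general}.

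With these in hand, \Cref{prop:general_skew_SYT_sum} yields
\begin{equation*}
	\mathsf{Z}_\lambda(\lambda)
	\sum_{T\in\ssp\SYT(\lambda/\mu)}
	\prod_{k=1}^{|\lambda/\mu|}
	\frac{\mathsf{C}_{T[=k]}}{\mathsf{p}_{T^{-1}[<k]}(\lambda)}
	=\mathsf{Z}_\mu(\lambda).
\end{equation*}
Since each step of the chain adds a single box, $\prod_k\mathsf{C}_{T[=k]}=\prod_k\varphi_{T^{-1}[\le k]/T^{-1}[\le k-1]}(q;t)=:\varphi_T(q;t)$, while $\mathsf{p}_{T^{-1}[<k]}(\lambda)=\sum_{i=1}^{\ell(\lambda)}t^{i-1}\bigl(q^{-\lambda_i}-q^{-T^{-1}[<k]_i}\bigr)$ is exactly the factor inverted on the left-hand side of the claimed formula. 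To finish I would substitute $\mathsf{Z}_\lambda(\lambda)$ from~\eqref{eq:I_lambda_lambda_Macdonald} and $\mathsf{Z}_\mu(\lambda)$ from the tableau formula~\eqref{eq:tableau_formula_Macdonald} evaluated at $x=\mathsf{x}^{(q,t)}(\lambda)$, pull a factor $t^{R(i,j)-1}$ out of each bracket $q^{-\lambda_{R(i,j)}}t^{R(i,j)-1}-q^{1-j}t^{R(i,j)+i-2}$, and divide both sides by $\mathsf{Z}_\lambda(\lambda)$.

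Everything here is routine; the only points demanding care are the bookkeeping of the reverse-tableau conventions and the shift $R(i,j)+i-2$ of \cite{Macdonald1995,olshanski2019interpolation} against the stated formula, and the genericity hypotheses needed so that \Cref{prop:general_skew_SYT_sum} applies. I expect the main (and minor) obstacle to be exactly that last point — confirming that the Pieri multiplier $\mathsf{p}_\mu(\lambda)$ (and $\mathsf{Z}_\mu(\mu)$) do not vanish identically — which is handled by the degree argument indicated above.
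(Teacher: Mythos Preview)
Your proposal is correct and follows exactly the route the paper takes: specialize the interpolation Macdonald polynomials at $\mathsf{x}^{(q,t)}(\lambda)$, read off the vanishing and Pieri data from \Cref{prop:vanishing_tableau_formula_Macdonald} and \eqref{eq:Pieri_Macdonald}, and feed them into \Cref{prop:general_skew_SYT_sum}. In fact you supply more detail than the paper does (the explicit check that $\mathsf{p}_\mu(\lambda)\ne 0$ for $\mu\subsetneq\lambda$ via the degree-in-$t$ argument, and the factoring of $t^{R(i,j)-1}$), all of which is fine.
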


\bibliographystyle{alpha}
\bibliography{bib}

\medskip

\textsc{G. Panova, University of Southern California, Los Angeles, CA, USA}

E-mail: \texttt{gpanova@usc.edu}

\medskip

\textsc{L. Petrov, University of Virginia, Charlottesville, VA, USA}

E-mail: \texttt{lenia.petrov@gmail.com}

\end{document}